\numberwithin{equation}{section}
\newcommand*{\Norm}[2]{\left\lVert {#1} \right\rVert_{#2}}
\newcommand{\f}{\frac}
\newcommand{\al}{\alpha}
\newcommand{\be}{\beta}
\newcommand{\ga}{\gamma}
\newcommand{\nf}{\infty}
\newcommand{\ve}{\varepsilon}
\newcommand{\q}{\quad}
\newcommand{\rn}{\mathbb R^n}
\newcommand{\C}{\mathbb C}
\newcommand{\R}{\mathbb R}
\newcommand{\om}{\omega}
\newcommand{\qq}{\qquad}
\newcommand{\bee}{\begin{equation}}
\newcommand{\eee}{\end{equation}}
\newcommand{\lab}{\label}
\newcommand{\p}{\partial}
\newcommand{\wh}{\widehat}
\newtheorem{Theorem}{Theorem}[section]
\newtheorem{Corollary}[Theorem]{Corollary}
\newtheorem{Lemma}[Theorem]{Lemma}
\newtheorem{Prop}[Theorem]{Proposition}
\newtheorem{Conjecture}[Theorem]{Conjecture}
\theoremstyle{definition}
\newtheorem{Remark}[Theorem]{Remark}
\newtheorem{Example}[Theorem]{Example}
\begin{document}

\title[Marcinkiewicz Multiplier Theorem]{ 
A  Sharp Variant of the  Marcinkiewicz  Theorem   with  
Multipliers in Sobolev Spaces of Lorentz type}

\author{Loukas Grafakos}

\address{Department of Mathematics, University of Missouri, Columbia MO 65211, USA}
\email{grafakosl@missouri.edu}

\author{Mieczys\l aw Masty\l o}

\address{Faculty of Mathematics and Computer Science,
Adam~Mickiewicz University, Pozna\'n, Uniwersytetu Pozna\'nskiego 4,
61-614 Pozna{\'n}, Poland}
\email{mastylo$@$math.amu.edu.pl}

\author{Lenka Slav\'ikov\'a}

\address{Mathematical Institute, University of Bonn, Endenicher Allee 60,
53115 Bonn, Germany}
\email{slavikova@karlin.mff.cuni.cz}

\subjclass[2010]{Primary   42B15. Secondary 42B25}
\keywords{Multiplier theorems, Sobolev spaces, Lorentz spaces}
\thanks{The first author acknowledges the support of the Simons Foundation. The second author was supported by the National
Science Centre, Poland, Grant no. 2019/33/B/ST1/00165. The third author was supported by Deutsche Forschungsgemeinschaft
(DFG, German Research Foundation) under Germany's Excellence Strategy - GZ 2047/1, Projekt-ID 390685813.}

\begin{abstract}
Given a bounded  measurable function $\sigma$ on $\mathbb{R}^n$, we  let $T_\sigma $ be the operator obtained
by multiplication  on the Fourier transform by $\sigma $. Let $0<s_1\le s_2\le  \cdots \le s_n<1$ and $\psi$ be a Schwartz
function on the real line whose Fourier transform $\widehat{\psi}$ is supported in  $[-2,-1/2]\cup[1/2,2]$  and which satisfies
$\sum_{j \in \mathbb{Z}} \widehat{\psi}\left(2^{-j} \xi\right)=1$ for all $\xi \neq 0$. In this work we sharpen the known
forms of the Marcinkiewicz multiplier theorem by finding an almost optimal function space with the property that, if the
function
\begin{equation*}
(\xi_1,\dots, \xi_n)\mapsto	  \prod_{i=1}^n (I-\partial_i^2)^{\frac {s_i}2}
\Big[ \prod_{i=1}^n \widehat{\psi}(\xi_i) \sigma(2^{j_1}\xi_1,\dots , 2^{j_n}\xi_n)\Big]
\end{equation*}
 belongs to it uniformly in $j_1,\dots , j_n \in \mathbb Z$, then $T_{\sigma}$ is bounded on
$ {L}^p(\mathbb R^n)$ when $ |\frac{1}{p}-\frac{1}{2}  | < s_1$ and $1<p<\infty$.
In the case where $s_i\neq s_{i+1}$ for all $i$,   it was proved
in \cite{Paper1} that the   Lorentz space $L ^{\frac{1}{s_1},1} (\mathbb{R}^n) $ is the  function space sought.
In this work we address the    significantly more difficult general case
when for certain indices $i$
we might have $s_i=s_{i+1}$. We obtain a version of the Marcinkiewicz multiplier theorem in which
the space     $L ^{\frac{1}{s_1},1}$
 is replaced by an
appropriate Lorentz space
associated with a certain concave function related to the number of terms among $s_2,\dots , s_n$ that
equal $s_1$. Our result is optimal up to an 
arbitrarily small power of the logarithm  in the defining concave function of the Lorentz space. 
\end{abstract}

\maketitle

\section{Introduction}\label{S:introduction}

Let $\mathscr C_0^\infty(\rn)$ be the space of smooth functions with compact support on $\rn$.  Given any function
$\sigma$ in ${L}^{\infty}(\mathbb{R}^n)$, we consider the multiplier operator $T_\sigma$ defined for all
$f\in  \mathscr C_0^\infty(\rn)$ by
\begin{align*}
T_{\sigma}f (x)= \int_{\mathbb{R}^n} \widehat{f}(\xi) \sigma(\xi) {e}^{2\pi i x \cdot \xi}d\xi, \quad\, x\in \mathbb{R}^{n}.
\end{align*}
As usual, here and in the sequel, $\widehat{f}$ denotes the Fourier transform of $f$ given by
\[
\widehat{f}(\xi)= \int_{\mathbb{R}^n}f(x) {e}^{-2\pi i x \cdot \xi}\,dx, \quad\, \xi\in \rn.
\]
The theory of multipliers is vast and extensive but basic material about them can be found in   \cite{Hor}, \cite{CFA14} and
    \cite{Larsen}.

A classical problem in harmonic analysis is to find good sufficient conditions on functions $\sigma$ guaranteeing that
    $T_{\sigma}$ extends to a~bounded operator on $L^p(\mathbb{R}^n)$ for some $1<p<\infty$.  If this is the case,
then $\sigma$ is called an $L^p$ Fourier multiplier. This  problem has a~long history going back to Bernstein,
Hardy, Weyl, Marcinkiewicz, Mikhlin and was
studied in the sixties  by several mathematicians including   Calder\'on \cite{Cal}, Hirschman \cite{Hirsch},
H\"ormander  \cite{Hor}, de Leeuw \cite{Leeuw}, Carleson and Sj\"olin \cite{CaS}.

The significance of the   multiplier problem lies in the fact that many classical
$L^p$ boundedness problems in analysis can be described
in terms of Fourier multipliers. Several conditions on $\sigma$ are known to imply boundedness
for $T_\sigma$ on $L^p(\mathbb{R}^n)$.
We are not going into a~complete historical overview of   multiplier theory, but we focus on versions of the
Marcinkiewicz multiplier theorem. We
start with the classical result of Marcinkiewicz~\cite{Marc}, first proved in the context of two-dimensional Fourier series, which basically says (in $n$ dimensions) that
if for all $\al_j \in \{0,1\}$ \bee\lab{Eq-Marc2} \Big| \p_{\xi_1}^{\alpha_1} \cdots  \p_{\xi_n}^{\alpha_n} m (\xi) \Big|
\le C_\al |\xi_1|^{ -\al_1}\cdots |\xi_n|^{ -\al_n} , \qq \textup{$\xi_j\neq 0$ when $\al_j=1$}, \eee then $T_m$ is bounded on $L^p(\rn)$
for all $p\in (1,\infty)$.

In order to fine-tune this theorem we discuss a version of it where the derivatives  $\al_j$ could be fractional. To describe this
we introduce a~Schwartz function $\psi$ on $\mathbb{R}$ whose Fourier transform is supported
in $[-2,-1/2]\cup [1/2,2]$ and which satisfies $\sum_{j\in \mathbb Z} \widehat{\psi}(2^{-j}\xi)=1$ for all $\xi\neq 0$. We then define
a~function $\Psi$ on $\rn$ such that
\bee\label{defPsi}
\wh{\Psi} =  \underbrace{\widehat{\psi}\otimes \cdots \otimes \widehat{\psi} }_ {\textup{$n$ times }}\,.
\eee
Here,
\[
(g_1\otimes \cdots \otimes g_n ) (x_1,\dots, x_n) := g_1(x_1) \cdots g_n(x_n), \quad\, (x_1,\dots, x_n)\in \mathbb{R}^{n}\,,
\]
stands for the tensor product of functions $g_j\colon \R \to \C$, $1\leq j \leq n$.
We use the following notation for the differential operator
\[
\Gamma(s_1,\dots , s_n):= (I-\partial_1^2)^{s_1/2} \cdots (I-\partial_n^2)^{s_n/2},
\]
where $\p_j$ denotes differentiation in the $j$th variable.  We also introduce the multi-dilation operator
\[
D_{j_1,\dots , j_n} g (\xi_1, \dots , \xi_n) :=  g (2^{j_1} \xi_1, \dots , 2^{j_n}\xi_n), \quad\, (\xi_1, \dots , \xi_n) \in \mathbb{R}^{n}\,,
\]
where $g$ is a function on $\rn$ and $j_1,\dots, j_n \in \mathbb Z$.

When $0< 1/r<s_1\le \cdots \le s_n<1$, it was shown in \cite{GSMarc19} that if
\begin{equation}\lab{FC99}
\sup_{j_1, \dots , j_n \in \mathbb{Z}}\Big\|\Gamma(s_1,\dots,s_n) \big[\wh{\Psi}
D_{j_1,\dots , j_n} \sigma \big]\Big\|_{L^r(\mathbb{R}^n)}  < \infty\,,
\end{equation}
then $T_{\sigma}$ maps ${L}^p(\mathbb R^n)$ to ${L}^p(\mathbb R^n)$ when $ | \f1p -\f12 | < s_1$. Earlier versions
of this result were provided by Carbery~\cite{C}, who   considered the case in which the multiplier lies in a~product-type
$L^2$-based Sobolev space, and  Carbery and Seeger~\cite[Remark after Prop.\ 6.1]{CSTAMS}, who considered the case
$s_1=\cdots = s_n > | \f1p -\f12 |=\f1r$. The positive direction of Carbery and Seeger's result in the range
$| \f1p -\f12 | < \f1r$ also  appeared in~\cite[Condition (1.4)]{CS}; note that in these cases the range is  expressed in
terms of the integrability of the  multiplier and not in terms of its smoothness.
Alternative improvements and variants of
the Marcinkiewcz multiplier theorem were also proved by Coifman, Rubio de Francia  and Semmes \cite{CFS} and Tao and Wright \cite{TW}.
An  extension of the Marcinkiewicz multiplier theorem to general Banach spaces was obtained by Hyt\"onen \cite{Hyt}.

A weakening of the condition in \eqref{FC99} was provided in   \cite{Paper1}, where the $L^r$ space was replaced by the locally
larger Lorentz space ${L}^{{1}/{s_1},1}(\mathbb R^n)$. But this was achieved under the additional hypothesis that $s_1<s_2<\cdots <s_n$; the case $n=2$ was first proved in \cite{Paper0}. 
In this paper we deal with the more complicated case when a streak of $s_j$'s could be identical. In this case, the Lorentz-space estimate from~\cite{Paper1} fails (see Example~\ref{example} below for the proof of this assertion). Nevertheless, we show that a~limiting version 
of the Marcinkiewicz multiplier theorem can still be obtained. We achieve this goal by enlarging the original Lorentz space 
${L}^{{1}/{s_1},1}(\mathbb R^n)$ by inserting in the defining function a certain power of the logarithm. The class of function spaces 
that is suitable for solving this problem is described below.
Given a concave function $\varphi: \mathbb R_+ \rightarrow \mathbb R_+$ that is positive on $(0,\infty)$ and satisfies $\varphi(0)=0$, 
we define the \emph{Lorentz space} $\Lambda_\varphi$ on $\mathbb{R}_{+}$ to be the space of all measurable functions $f$ on $\R_+$ for which
\[
\|f\|_{\Lambda_\varphi} := \int_0^\infty f^{*}(t)\,d\varphi(t)<\infty\,,
\]
where $f^{*}$ denotes the non-increasing rearrangement of $f$. If $1\leq p<\infty$ and $\varphi(t) = t^{1/p}$ for all $t\geq 0$, then we
recover the classical Lorentz space $L^{p, 1}$.

For $s\in (0, 1)$ and  $\beta \in \R$, we consider a concave function $\phi_{s, \be}$  such that
\begin{equation}\label{E:phi}
\phi_{s,\be}(t) \approx t^s \log^{\beta}\big(e+\tfrac{1}{t}\big), \quad t>0\,.
\end{equation}
The main result of this paper is the following theorem.

\begin{Theorem}
\label{1/2MainResult}
Let $1<p<\infty$ and $\Psi$ be as in \eqref{defPsi}. Let $0<s_1\le s_2 \le \cdots \le s_n <1$ and assume that there are exactly $d$
numbers among   $s_2,\dots, s_n$ that equal $s_1$. In addition, assume that $s_1>|1/p-1/2|$.  If a~function $\sigma \in L^{\infty}(\mathbb{R}^n)$ satisfies
\begin{equation}
\lab{hypoK}
K:=	\sup_{j_1,\dots , j_n\in \mathbb{Z} } \Norm{\Gamma \left(s_1,\dots , s_n\right)\big[  \wh{\Psi}\,
D_{j_1,\dots , j_n} \sigma \big]}{\Lambda_{\phi_{s_1, d}}(\mathbb{R}^n)} < \infty\,,
\end{equation}
then there is constant $C=C(s_1,\dots, s_n,p,n,d,\psi)$
such that, for every $f\in \mathscr C_0^\infty(\rn)$, we have
\begin{equation}\label{E:bound}
\Norm{T_\sigma f}{{L}^p(\mathbb{R}^n)} \leq C K \Norm{f}{{L}^p(\mathbb{R}^n)}\,.
\end{equation}
Thus, $T_{\sigma}$ admits a bounded extension from ${L}^p(\mathbb{R}^n)$ to ${L}^p(\mathbb{R}^n)$ with the same bound.
\end{Theorem}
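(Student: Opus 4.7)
The plan is to adapt the methodology of \cite{Paper1}, which handled the strict-inequality case $s_1<s_2<\cdots<s_n$ with the smaller Lorentz space $L^{1/s_1,1}$, by inserting a finer level-set decomposition adapted to the concave weight $\phi_{s_1,d}$. The inserted logarithmic factor should exactly absorb the degeneracies produced by the equal exponents $s_i=s_1$. As a first reduction, I would write the standard Littlewood--Paley decomposition of $\sigma$ in frequency, $\sigma=\sum_{\vec j\in\mathbb Z^n}\sigma_{\vec j}$, with $\sigma_{\vec j}(\xi):=\sigma(\xi)\,\wh\Psi(2^{-j_1}\xi_1,\dots,2^{-j_n}\xi_n)$, so that $T_\sigma=\sum_{\vec j}T_{\sigma_{\vec j}}$. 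A change of variables converts the hypothesis \eqref{hypoK} into a uniform $\Lambda_{\phi_{s_1,d}}$ bound (up to a rescaling which is absorbed by the dilation invariance of the operator norm) on the smoothed block $G_{\vec j}:=\Gamma(s_1,\dots,s_n)\big[\wh\Psi\,D_{j_1,\dots,j_n}\sigma\big]$.

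Next, I would decompose each $G_{\vec j}$ via its level sets, writing $G_{\vec j}=\sum_{m\in\mathbb Z}G_{\vec j,m}$ with $G_{\vec j,m}$ supported on $\{2^{m}\le |G_{\vec j}|<2^{m+1}\}$. The definition of $\|\cdot\|_{\Lambda_{\phi_{s_1,d}}}$ yields
\[
\sum_{m\in\mathbb Z}2^{m}\,\phi_{s_1,d}\big(\big|\{|G_{\vec j,m}|>0\}\big|\big)\lesssim K,
\]
and these bounds can be transferred, via the Bessel calculus, into precise $L^{r}$-type estimates for the preimages $\sigma_{\vec j,m}$ obtained by inverting $\Gamma(s_1,\dots,s_n)$ on each $G_{\vec j,m}$. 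The core technical step is then a level-by-level application of the argument from \cite{Paper1}: for each fixed $m$, the family $\{\sigma_{\vec j,m}\}_{\vec j}$ should yield a bound $\big\|\sum_{\vec j}T_{\sigma_{\vec j,m}}f\big\|_{L^p}\le B_{m}\,\|f\|_{L^p}$, obtained by combining Plancherel at $p=2$, Sobolev embedding for the gap $|1/p-1/2|<s_1$, and product-type Littlewood--Paley square-function estimates. Summation over $\vec j$ converges geometrically in the $n-1-d$ indices $i$ where $s_i>s_1$; in the remaining $d+1$ indices where $s_i=s_1$ it merely telescopes dyadically, producing a combinatorial factor of order $(1+|m|)^{d}$.

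The proof would conclude by summing over $m$: combining $B_{m}\lesssim 2^{m}(1+|m|)^{d}$ with the layer-by-layer Lorentz-space control gives $\sum_{m}B_{m}\lesssim \sum_{m}2^{m}\,\phi_{s_1,d}\big(|\mathrm{supp}\,G_{\vec j,m}|\big)\lesssim K$, which is exactly \eqref{E:bound}. The hardest step, and the main novelty relative to \cite{Paper1}, is the precise identification of the exponent $d$ in the combinatorial factor $(1+|m|)^{d}$: the overlapping Fourier projections in the $d+1$ directions with equal $s_i$ destroy the geometric decoupling that drove the proof in \cite{Paper1}, and one has to replace it with an iterated vector-valued argument in those directions, together with careful bookkeeping of the fractional smoothness transferred from $\Gamma(s_1,\dots,s_n)$ at each fixed scale. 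Controlling this logarithmic loss sharply---showing that it is no worse than $(1+|m|)^{d}$---is where the exact matching between the concave function $\phi_{s_1,d}$ and the number $d$ of repeated exponents becomes visible.
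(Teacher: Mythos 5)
There is a genuine gap, in fact two. First, your closing summation is not coherent: you assert an operator bound $B_m\lesssim 2^m(1+|m|)^d$ for the level-$m$ piece, a quantity carrying no information about the measure of the level set, and then claim $\sum_m B_m\lesssim \sum_m 2^m\,\phi_{s_1,d}\big(|\{G_{\vec j,m}\neq 0\}|\big)\lesssim K$. The first of these inequalities is exactly the missing theorem: to sum against the $\Lambda_{\phi_{s_1,d}}$ norm you would need $B_m\lesssim 2^m\phi_{s_1,d}(|E_m|)$ (with $E_m$ the level set), and there is no reason why a combinatorial factor $(1+|m|)^d$ coming from ``telescoping in the $d+1$ equal directions'' should be dominated by $\log^d(e+1/|E_m|)$ --- these two quantities are unrelated (large positive $m$ with large $|E_m|$ breaks the matching). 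Moreover, the level-by-level appeal to \cite{Paper1} is not available: that result requires $s_1<s_2<\cdots<s_n$ and its conclusion genuinely fails when some $s_i=s_1$ (this is Example~\ref{example} in the present paper), and truncating the multiplier block by the height of $\Gamma(s_1,\dots,s_n)[\wh\Psi D_{j_1,\dots,j_n}\sigma]$ does not remove the coincidence of exponents, so each level piece faces the same obstruction as the whole block. In the paper the logarithmic loss is not a combinatorial artifact of summation over $\vec j$ at all; it is quantified in a rearrangement/distribution estimate for $g(y)/\prod_i(1+|y_i|)^{s_i}$ (Lemma~\ref{lemmas1=s2}, Corollary~\ref{cor88}, resting on the integral computation of Lemma~\ref{below}), and it is absorbed by putting the multiplier in the smaller space $\Lambda_{\phi_{s_1,\cdot}}$ through the Lorentz--Marcinkiewicz H\"older duality \eqref{E:holder} combined with a Hausdorff--Young inequality adapted to Lorentz spaces (Lemma~\ref{lorentz}); the outcome is a pointwise domination of each Littlewood--Paley piece of $T_\sigma f$ by $\mathcal M_{L^q}$ of the corresponding piece of $f$, after which Fefferman--Stein and the product Littlewood--Paley theorem finish $p\ge 2$.

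Second, your sketch never explains how the range $|1/p-1/2|<s_1$ (in particular $s_1\le 1/2$) is reached; ``Sobolev embedding for the gap'' is not an argument. The direct Lorentz-duality mechanism only works when $1/2<s_1<1$, because the Hausdorff--Young step in Lemma~\ref{lorentz} requires the relevant Boyd indices to lie in $(1/2,1)$; the paper then covers the whole claimed range by a separate analytic-family interpolation (Theorem~\ref{Interpolation}, via Hirschman's lemma, measure-preserving rearrangements and the sunrise-type Lemma~\ref{Lorentz maximal}) between the $s_1^0>1/2$ case of Section~\ref{S:core} and the trivial $L^2$ bound furnished by the embedding of Proposition~\ref{P:embedding}. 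Without a substitute for both the duality/Hausdorff--Young core and this interpolation step, the proposal does not yield \eqref{E:bound}.
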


Naturally, the theorem remains invariant under any permutation of the variables. It was only stated in the case
where the index $s_j$ corresponds to variable $\xi_j$ for simplicity. We also point out that the power $d$ of the
logarithm in condition~\eqref{hypoK} can be slightly lowered if we allow it to depend on $s_1$;   on this improvement see
Remark~\ref{R:remark_logarithm}.   
The results contained  in Theorem~\ref{1/2MainResult} and Remark~\ref{R:remark_logarithm} inspire  the following  speculation related to the  optimal power of the logarithm.

\begin{Conjecture}
\label{C:conjecture}
Let $p$, $\Psi$, $s_1,\dots,s_n$ be as in Theorem~\ref{1/2MainResult}. If a~function 
$\sigma$ in $L^{\infty}(\mathbb{R}^n)$ satisfies
\begin{equation}
\label{E:optimal_space}
K':=	\sup_{j_1,\dots , j_n\in \mathbb{Z} } \Norm{\Gamma \left(s_1,\dots , s_n\right)\big[  \wh{\Psi}\,
D_{j_1,\dots , j_n} \sigma \big]}{\Lambda_{\phi_{s_1, (1-s_1)d}}(\mathbb{R}^n)} < \infty\,,
\end{equation}
then there is constant $C=C(s_1,\dots, s_n,p,n,d,\psi)$
such that, for every $f\in \mathscr C_0^\infty(\rn)$, we have
\begin{equation*}
\Norm{T_\sigma f}{{L}^p(\mathbb{R}^n)} \leq C K' \Norm{f}{{L}^p(\mathbb{R}^n)}\,.
\end{equation*}
\end{Conjecture}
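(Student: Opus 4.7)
\textbf{Proof proposal for Conjecture~\ref{C:conjecture}.} My plan is to follow the skeleton of the proof of Theorem~\ref{1/2MainResult} but replace its key Sobolev--Lorentz embedding step with a sharper endpoint estimate that captures a genuine $(1-s_1)$ gain in each critical direction. First I would perform the standard reduction: by Littlewood--Paley decomposition along the $d+1$ variables whose Sobolev exponent equals $s_1$ (say $\xi_1,\dots,\xi_{d+1}$), and by the uniform bound \eqref{E:optimal_space} in $j_1,\dots,j_n$, it suffices to control a family of localized pieces $\sigma_{j_1,\dots,j_n}$ each concentrated on a fixed dyadic tile. The extra directions $\xi_{d+2},\dots,\xi_n$ carry strictly larger Sobolev exponents $s_i>s_1$, so along them one has room to invoke standard Sobolev embedding and absorb their contribution; the problem collapses to the case in which all relevant critical indices are equal.

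Once the problem is reduced to the equal-exponent setting, the job is to establish a sharp embedding of the form
\begin{equation*}
\prod_{i=1}^{d+1}(I-\p_i^2)^{-s_1/2}\colon \Lambda_{\phi_{s_1,(1-s_1)d}}(\R^{d+1}) \longrightarrow X\,,
\end{equation*}
where $X$ is strong enough to feed the square-function/Carbery-type argument used in \cite{Paper1} and in Theorem~\ref{1/2MainResult}. The point is that when $s_1$ is small, the one-dimensional Bessel potential $(I-\p_i^2)^{-s_1/2}$ maps $L^{1/s_1,1}$ boundedly into a space that is much better than $L^\infty$ by a logarithmic factor of exponent $1-s_1$ (in the spirit of Br{\'e}zis--Wainger/Hansson at the critical Sobolev line). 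Iterating this one-dimensional gain across the $d$ additional critical variables would accumulate a total logarithmic gain of order $(1-s_1)d$, which is exactly the amount by which \eqref{E:optimal_space} is weaker than \eqref{hypoK}. To implement this rigorously, I would split $\Gamma(s_1,\dots,s_1)[\,\wh{\Psi}\, D_{j_1,\dots,j_n}\sigma]$ via the rearrangement inequality and estimate the convolution with the product kernel directly in the rearrangement-invariant scale $\Lambda_{\phi_{s_1,\beta}}$, tracking the sharp dependence of the embedding constant on $\beta$.

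The main obstacle, and the reason this remains a conjecture, is the tensorization of the critical logarithmic embedding. In one variable the sharp endpoint $W^{s,1/s,1}\hookrightarrow \exp L^{1/(1-s)}$-type embedding is classical, but the tensor-product version needed here lives between iterated and joint Lorentz--Orlicz scales, and the square-function machinery of Carbery--Seeger that drives the $L^p$ bound for $T_\sigma$ is tuned to the isotropic Lorentz space $L^{1/s_1,1}$. One must therefore either refine the square-function argument so that it accepts inputs from $\Lambda_{\phi_{s_1,(1-s_1)d}}$, or dualize against the extremizers exhibited in Example~\ref{example} to show that no further logarithmic gain is possible. I would attempt the first route via real interpolation between the $s_1\downarrow 0$ case (where \eqref{hypoK} is essentially already \eqref{E:optimal_space}) and the near-integer case $s_1\uparrow 1$ (where the logarithmic factor disappears and the classical Marcinkiewicz condition suffices), hoping that the linearity of the exponent $\beta=(1-s_1)d$ in $1-s_1$ is dictated precisely by such an interpolation.
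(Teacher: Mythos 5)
You have not proved the statement, and it is important to be clear that the paper does not prove it either: Conjecture~\ref{C:conjecture} is explicitly left open there. What the paper actually establishes is Theorem~\ref{1/2MainResult} (logarithmic power $d$), the stronger power $s_1d$ in the range $1/2<s_1<1$ (Section~\ref{S:core}), and, for $s_1\le 1/2$, the power $(1-s_1)d+\delta$ for arbitrary $\delta>0$ (Remark~\ref{R:remark_logarithm}), obtained by running the complex-interpolation argument of Theorem~\ref{Interpolation} with an analytic family $\sigma_z$ modified by logarithmic factors of $h_{k_1,\dots,k_n}$. The endpoint $\delta=0$ is precisely what that interpolation cannot reach. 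Your text is a research plan rather than a proof, and you concede as much in your last paragraph; so by your own account there is a genuine gap, and the question is whether the plan's intermediate steps are even sound.

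Several of them are not, or at least are unjustified. First, the heuristic driving your scheme — that $(I-\partial^2)^{-s_1/2}$ maps $L^{1/s_1,1}(\R)$ into a space ``better than $L^\infty$ by a logarithmic factor of exponent $1-s_1$,'' to be iterated over the $d$ extra critical variables — is not available: with the Lorentz refinement $L^{1/s,1}$ the one-dimensional embedding lands exactly in $L^\infty$, with no logarithmic room to spare. In the paper the exponent $(1-s_1)d$ is a genuinely joint, multi-variable phenomenon: it arises from the rearrangement of the tensor kernel $G_{s_1}\otimes\cdots\otimes G_{s_n}$, namely $(G_{s_1}\otimes\cdots\otimes G_{s_n})^*(t)\lesssim t^{s_1-1}\log^{(1-s_1)d}(e+1/t)$, computed via Lemma~\ref{below} in Propositions~\ref{P:embedding} and~\ref{P:optimality}; it is not an accumulation of $d$ one-dimensional gains. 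Second, your reduction ``absorb the variables with $s_i>s_1$ and collapse to the equal-exponent case'' is not justified: in the paper's core argument (Lemma~\ref{lemmas1=s2}, Corollary~\ref{cor88}) all variables must be handled simultaneously, and the resulting dual Marcinkiewicz space $M_{\omega_{s_1,-s_1d}}$, fed through the Hausdorff--Young-type Lemma~\ref{lorentz}, produces the power $s_1d$, not $(1-s_1)d$; moreover Lemma~\ref{lorentz} requires $\gamma_\varphi>1/2$, i.e. $s_1>1/2$, so for small $s_1$ (where $(1-s_1)d$ exceeds $s_1d$ and the conjecture has content beyond what Section~\ref{S:core} gives) the square-function argument cannot be run directly and one is forced into interpolation — exactly the step where the $\varepsilon$-loss of Remark~\ref{R:remark_logarithm} appears. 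Finally, your proposed ``real interpolation in $s_1$ between $s_1\downarrow 0$ and $s_1\uparrow 1$'' is not a workable substitute: neither endpoint is an available theorem, and interpolation in the smoothness parameter within these logarithmically refined Lorentz scales is precisely the kind of procedure that loses, rather than recovers, the endpoint logarithm. In short, the proposal neither closes the conjecture nor supplies a correct alternative route to the results the paper does prove.
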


We recall that $\|T_\sigma\|_{L^p(\R^n) \rightarrow L^p(\R^n)} \gtrsim \|\sigma\|_{L^\infty(\R^n)}$, and
the appearance of the space $\Lambda_{\phi_{s_1, (1-s_1)d}}(\mathbb{R}^n)$ in Conjecture~\ref{C:conjecture} 
is motivated by the fact that  among all rearrangement-invariant spaces $E(\R^n)$ satisfying the Sobolev-type 
embedding
\[
\|\sigma\|_{L^\infty(\R^n)} \lesssim \|\Gamma(s_1,\dots,s_n) \sigma\|_{E(\R^n)},
\]
$\Lambda_{\phi_{s_1, (1-s_1)d}}(\mathbb{R}^n)$ is locally the largest one, see Proposition~\ref{P:optimality} 
below. Our emphasis on the local behavior of the function spaces involved when investigating optimality questions 
is then justified by the local nature of the condition~\eqref{hypoK}. We point out that while the validity of 
Conjecture~\ref{C:conjecture} remains an open problem, we are able to show that condition~\eqref{E:optimal_space} 
is sufficient for the $L^p$ boundedness of the operator $T_\sigma$ when the power $(1-s_1)d$ of the logarithm is 
replaced by $(1-s_1)d+\varepsilon$ for some $\varepsilon>0$, assuming that $s_1 \leq 1/2$; this is the content 
of Remark~\ref{R:remark_logarithm} below.

Throughout the paper we use standard notation. Given two nonnegative functions $f$ and $g$ defined on the same set
$A$, we write $f \lesssim g$, if there is a~constant $c>0$ such that $f(x) \leq c g(x)$ for all $x\in A$, while 
$f \approx g$ means that both $f \lesssim g$ and $g \lesssim f$ hold. If $X$ and $Y$ are Banach spaces, then 
$X\hookrightarrow Y$ means that $X \subset Y$ and the inclusion map is continuous. If $X$ and $Y$ are
Banach spaces, then we write $X=Y$ if $X\hookrightarrow Y$ and $Y\hookrightarrow X$. The measure space of all 
Lebesgue's measurable subsets of $\mathbb{R}^n$ equipped with Lebesgue measure $\lambda_n$ is denoted by 
$(\mathbb{R}^n, \lambda_n)$. For simplicity of notation, $\lambda$ denotes the Lebesgue measure restricted 
to Lebesgue's measurable subset of $\mathbb{R}_{+}:=[0, \infty)$. We use $C$ to describe an inessential
constant that may vary from occurrence to occurrence.

\section{Background   material}

Let $(\Omega, \mu):=(\Omega, \Sigma, \mu)$ be a~$\sigma$-finite measure space and let $L^0(\mu)$ denote the space 
of all (equivalence classes) of scalar valued (real or complex) $\Sigma$-measurable functions on $(\Omega, \mu)$ 
(on $\Omega$ for short) that are finite $\mu$-a.e. A~Banach space $X\subset L^0(\mu)$ is said to be a~Banach function 
space over $\Omega$ if  for all $f, g\in L^0(\mu)$ with $|g| \leq |f|$ $\mu$-a.e. and $f\in X$, one has $g\in X$ and 
$\|g\|_X \leq \|f\|_X$. The K\"othe dual space $X'$ of a Banach function space $X$ on $\Omega$ is a Banach function 
space of those $f\in L^0(\mu)$ for which $\|f\|_{X'} := \sup\big\{\int_\Omega |f g|\,d\mu:\, \|g\|_X \leq 1\big\}$ 
is finite.

Given $f\in L^0(\mu)$, its \emph{distribution function} is defined by $\mu_f(\tau)=\mu(\{x\in\Omega: \,|f(x)|> \tau\})$,
$\tau >0$, and its \emph{nonincreasing rearrangement} by $f^{*}(t)=\inf \{\tau \geq 0: \mu_f(\tau)\leq t\}$, $t\geq 0$.
A~Banach function space $E$ is called a~\emph{rearrangement-invariant} (r.i.) space if $\|f\|_E = \|g\|_E$ whenever
$\mu_f = \mu_g$ and $f\in E$.

Let $E$ be an~r.i.\ space on $\mathbb{R}_+$ and let $(\Omega, \mu)$ be a~measure space. Then we define the r.i.\ space 
$E(\Omega)$ on $\Omega$ to be the space of all $f\in L^0(\mu)$ such that $f^{*} \in E$ with $\|f\|_{E(\Omega)} = \|f^{*}\|_E$. 
Many properties of r.i.\ spaces can be expressed in terms of conditions on their Boyd indices. Recall that for any 
r.i.\ space $E$ on $\mathbb{R}_+$, we define the dilation operators $\sigma_s$ for $0 <s < \infty$ by
\[
\sigma_{s}f(t) = f(t/s), \quad\, f\in E, \,t\geq 0\,.
\]
Since $s\mapsto \|\sigma_s\|_E = \sup_{\|f\|_E \leq 1}\|\sigma_{s}f\|_E$ is a finite submultiplicative function on $(0, \infty)$,
the Boyd indices given by
\[
\alpha_E := \lim_{s\to 0+} \frac{\log \|\sigma_s\|_E}{\log s}, \quad\, \beta_E := \lim_{s\to \infty} \frac{\log \|\sigma_s\|_E}{\log s}
\]
are well defined and satisfy $0\leq \alpha_E \leq \beta_E \leq 1$ (see \cite[p.~99]{KPS})\,.

In the theory of operators on r.i. spaces the Lorentz and the Marcinkiewicz space play a~fundamental role. Let $\mathcal{P}$ be
the~set of functions $\varphi \colon \mathbb{R_+}\to \mathbb{R_+}$ that are concave, positive on $(0, \infty)$ and $\varphi(0) = 0$.

Given $\varphi\in\mathcal{P}$, the \emph{Lorentz space} $\Lambda_\varphi$ on $\mathbb{R}_{+}$ consists of all $f\in L^0(\lambda)$
such that
\[
\|f\|_{\Lambda_\varphi} := \int_0^\infty f^{*}(t) d\varphi(t) = \varphi(0+) f^{*}(0+)\, + \, \int_0^\infty f^{*}(t)\varphi'(t)\,dt\,,
\]
where $\varphi'$ is the derivative of $\varphi$, which exists except at a~countable set. We note that the functional $\|\cdot\|_{\Lambda_\varphi}$
induced by an increasing function $\varphi\colon \mathbb{R}_{+}\to \mathbb{R}_{+}$ is a~norm if and only if $\varphi$ is concave and
$\varphi(0) = 0$ \cite{Lorentz}. We note that $\Lambda_\varphi$ is a~separable space if and only if $\varphi(0+)=0$ and
$\varphi(+\infty) := \lim_{t\to \infty} \varphi(t) = \infty$.

Let $\mathcal{Q}$ be the set of all quasi-concave functions $\varphi\colon \mathbb{R}_{+} \to \mathbb{R}_{+}$, that is, of all positive functions
$\varphi$ on $(0, \infty)$ such that $\varphi(s) \leq \max\{1, s/t\} \varphi(t)$ for all $s, t>0$. Note that, for any $\varphi\in \mathcal{Q}$,
the function $\varphi_{*}$ given by $\varphi_{*}(t) := t/\varphi(t)$ for all $t>0$ is also a~quasi-concave function. We also note that for every
quasi-concave function there exists a~\emph{concave majorant} defined by
\[
\widetilde{\varphi}(t) = \inf_{s>0} \Big(1 + \frac{t}{s}\Big) \varphi(s)\,,
\]
which satisfies $\varphi(t) \leq \widetilde{\varphi}(t) \leq 2 \varphi(t)$ for all $t>0$.

For each given $\varphi\in \mathcal{Q}$, the \emph{Marcinkiewicz space} $M_\varphi$ on $\mathbb{R}_{+}$ is the r.i. space
of all $f\in L^0(\lambda)$ equipped with the norm
\[
\|f\|_{M_\varphi} := \sup_{t>0} \varphi(t)f^{**}(t)\,,
\]
where $f^{**}(t) := \frac{1}{t} \int_0^t f^*(s)\,ds$ for all $t>0$.

We will consider the Lorentz space $\Lambda_\varphi(\mathbb{R}^n)$ and the Marcinkiewicz space $M_{\varphi}(\mathbb{R}^n)$ over
the measure space $(\mathbb{R}^n, \lambda_n)$. We will use the K\"othe duality between Lorentz and Marcinkiewicz spaces,
which states that for any $\varphi \in \mathcal{P}$ with $\varphi(0+)=0$, we have
\[
\Lambda_\varphi(\mathbb{R}^n)' = M_{\varphi_{*}}(\mathbb{R}^n)
\]
with equality of norms. As a consequence, we have the following variant of H\"older's inequality (see, e.g., \cite[Theorem 5.2]{KPS} or
\cite[Chapter 1,  Theorem 2.4]{BSInterpol88}):
\begin{equation}\label{E:holder}
\int_{\mathbb{R}^n} |f g|\,d\lambda_n \leq \|f\|_{\Lambda_\varphi(\R^n)}\,\|g\|_{M_{\varphi_{*}}(\R^n)}\,.
\end{equation}
In what follows,  for simplicity of notation, we often   write
$\Lambda_\varphi$ and $M_\varphi$ for short instead of $\Lambda_{\varphi}(\mathbb{R}^n)$ and $M_{\varphi}(\mathbb{R}^n)$.

We will also consider a~class $\mathcal{B}$ of all measurable functions $\psi \colon \mathbb{R}_{+} \to \mathbb{R}_{+}$ such that the
function $m_{\psi}$ is finite and measurable, where
\[
m_{\psi}(t):= \sup_{s>0} \frac{\psi(st)}{\psi(s)}, \quad\, t>0\,.
\]
The lower and the upper index of a function $\psi\in \mathcal{B}$ are defined by
\[
\gamma_\psi= \lim_{t\to 0+} \frac{\log m_{\psi}(t)}{\log t}, \quad\, \delta_\psi = \lim_{t\to \infty} \frac{\log m_{\psi}(t)}{\log t}\,.
\]
We have $-\infty <\gamma_\psi \leq \delta_\psi <\infty$ (see \cite[Section 2, p.~53]{KPS}). Note that $\varphi, \psi \in \mathcal{B}$
with $\varphi \approx \psi$ implies $\gamma_\varphi = \gamma_\psi$ and $\delta_\varphi = \delta_\psi$.

\vspace{2 mm}

In the sequel we will use the following properties without any references:

\vspace{1.5 mm}

\noindent
\textup{(i)} Every function $\psi\in \mathcal{B}$ with $0<\gamma_\psi \leq \delta_\psi<1$ is equivalent to its \emph{concave majorant}
(see \cite[Corollary 2, p.~55]{KPS}).\\

\noindent
\textup{(ii)} If $\varphi\in \mathcal{P}$ with $\gamma_\varphi>0$, then it follows from \cite[Lemma 2.1.4]{KPS} that
\begin{equation}\label{*}
\varphi(t) \approx \int_0^t \frac{\varphi(s)}{s}\,ds, \quad\, t>0\,.
\end{equation}
In particular this implies that,
\[
\|f\|_{\Lambda_\varphi}  \approx \int_0^\infty f^{*}(t) \frac{\varphi(t)}{t}\,dt, \quad\, f\in \Lambda_{\varphi}(\mathbb{R}^n)\,,
\]
up to multiplicative constants depending only on $\varphi$.
\\

\noindent
\textup{(iii)} If $\varphi \in \mathcal{Q}$ with $\delta_\varphi <1$, then $\gamma_{\varphi_{*}} = 1 -\delta_\varphi>0$
and so by applying~\eqref{*}, we conclude that
\[
\|f\|_{M_\varphi} \approx \sup_{t>0} \varphi(t) f^{*}(t), \quad\, f\in M_\varphi(\mathbb{R}^n)\,.
\]

\noindent
\textup{(iv)} If $\varphi \in \mathcal{P}$ with $0<\gamma_\varphi \leq \delta_\varphi <1$, then the Lorentz space
$\Lambda_\varphi(\mathbb{R}^n)$ is separable (by $\varphi(0+)=0$ and $\varphi(+\infty)=\infty$). In particular,
it follows that the space $\mathscr C_0^\infty(\rn)$ is dense in
$\Lambda_\varphi(\mathbb{R}^n)$. \\

Throughout the paper we consider two families of special concave functions associated with indices $s\in (0,1)$ and
$\beta \in \R$. The function $\phi_{s,\beta}$ was defined in~\eqref{E:phi}. In addition, we let $\om_{s,\be}$ be
a~concave function satisfying
\[
\om_{s,\be}(t) \approx t^s \log^{\beta}(e+t), \quad t>0\,.
\]
Basic properties of the functions $\phi_{s,\beta}$ and $\om_{s,\be}$ are summarized in the following proposition.

\begin{Prop}
\label{phiomega}
Given $s>0$, $\beta \in \R$, consider the functions $\phi$ and $\omega$ defined by $\phi(t):=
t^s \log^\beta\big(e + \frac{1}{t}\big)$ and $\omega(t):= t^s \log^\beta(e + t)$ for all $t>0$. Then
$\phi, \omega \in \mathcal{B}$ with $\gamma_\phi = \delta_\phi = s$ and $\gamma_\omega = \delta_\omega = s$. If,
in addition, $s\in (0, 1)$, then $\phi$ and $\omega$ are equivalent to their concave majorant denoted by
$\phi_{s, \beta}$ and $\omega_{s, \beta}$, respectively.
\end{Prop}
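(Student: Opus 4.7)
The plan is to verify the class-$\mathcal{B}$ property and compute the indices by a direct calculation, extracting the factor $t^s$ and showing the residual logarithmic factor is bounded polylogarithmically in the dilation parameter. The starting point is the elementary estimate
\[
\log(e+r) \approx 1 + \log_{+} r, \qquad r>0,
\]
where $\log_{+} r := \max(\log r, 0)$. Applied to $\phi(t) = t^s \log^\beta(e+1/t)$, this gives
\[
\frac{\phi(ut)}{\phi(t)} = u^s \left(\frac{\log(e+1/(ut))}{\log(e+1/t)}\right)^\beta \approx u^s \left(\frac{1+\log_{+}(1/(ut))}{1+\log_{+}(1/t)}\right)^\beta.
\]

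Next I would establish the uniform inequality
\[
\frac{1 + \log_{+}(1/(ut))}{1 + \log_{+}(1/t)} \leq C\bigl(1 + \log_{+}(1/u) + \log_{+} u\bigr), \qquad t,u>0,
\]
together with the symmetric lower bound. This reduces to a short case analysis according to whether $t$ and $ut$ sit above or below $1$; the extremal case gives a factor of order $1 + \log_{+}(1/u)$ when $u\le 1$ and of order $1 + \log_{+} u$ when $u\ge 1$. Raising to the power $\beta$, I obtain a polylogarithmic bound
\[
m_\phi(u) \leq C\, u^s \bigl(1 + \log_{+} u + \log_{+}(1/u)\bigr)^{|\beta|},
\]
which shows both that $m_\phi$ is finite and that it is measurable (in fact continuous, since the ratio $\phi(ut)/\phi(t)$ is continuous in $u$ uniformly on compact $t$-sets, so the supremum may be restricted to a countable dense set). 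Inserting this bound into the defining limits for $\gamma_\phi$ and $\delta_\phi$, the polylogarithmic factor contributes only $O(\log\log)$ after taking logarithms, which is negligible compared with $\log u$ as $u\to 0^+$ or $u\to\infty$; hence $\gamma_\phi = \delta_\phi = s$. The analysis of $\omega(t) = t^s \log^\beta(e+t)$ is entirely parallel, with $1/t$ replaced by $t$, yielding $\gamma_\omega = \delta_\omega = s$.

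For the second assertion, suppose $s\in(0,1)$. Then $0 < \gamma_\phi = \delta_\phi = s < 1$, so property~(i) of the preceding list applies verbatim: $\phi$ is equivalent to its concave majorant, which we designate $\phi_{s,\beta}$; likewise $\omega\approx\omega_{s,\beta}$. The only delicate point in the argument will be keeping careful track of the two regimes $u\le 1$ and $u\ge 1$ (and, within each, of the sign of $\beta$), since the supremum defining $m_\phi(u)$ is attained at values of $t$ of very different orders of magnitude in the two regimes; once the uniform estimate on the logarithmic ratio is in place, the index computation and the invocation of property~(i) are essentially immediate.
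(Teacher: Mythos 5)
Your proposal is correct in substance and follows the same overall skeleton as the paper's proof: estimate the dilation function $m_\phi$, observe that the logarithmic correction is negligible in the limits defining $\gamma_\phi$ and $\delta_\phi$, and invoke property (i) for the equivalence with the concave majorant when $s\in(0,1)$. The technical device differs, though. You work additively, writing $\log(e+r)\approx 1+\log_{+}r$ and bounding the ratio of logarithms above by $1+\log_{+}(1/u)$ and below by $(1+\log_{+}u)^{-1}$, which gives the cruder but sufficient bound $m_\phi(u)\lesssim u^s\bigl(1+\log_{+}u+\log_{+}(1/u)\bigr)^{|\beta|}$ simultaneously for both signs of $\beta$; the paper instead uses the multiplicative inequality $\log(e+ab)\le 2\log(e+a)\log(e+b)$ to obtain the sharper two-sided equivalence $m_\phi\approx\phi$ for $\beta\ge 0$, and disposes of $\beta<0$ via the symmetry $m_{\phi_\beta}=m_{\omega_{-\beta}}$. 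Your route buys a uniform treatment of both signs of $\beta$ (and of $\phi$ and $\omega$ in parallel); the paper's buys the cleaner statement $m_\phi\approx\phi$, from which the index computation is immediate by evaluating $\log\phi(t)/\log t$. One step you should make explicit: the upper bound on $m_\phi$ alone only yields $\gamma_\phi\ge s$ and $\delta_\phi\le s$, so to conclude equality you also need a matching lower bound of the form $m_\phi(u)\ge c\,u^s\bigl(1+\log_{+}u+\log_{+}(1/u)\bigr)^{-|\beta|}$; this is free from your own estimates, either by taking $t=1$ in the two-sided ratio bound or simply from $m_\phi(u)\ge \phi(u)/\phi(1)$. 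With that one sentence added, the index computation and the appeal to property (i) are complete, and your measurability argument (restricting the supremum to rational $t$, so that $m_\phi$ is a countable supremum of continuous functions) is fine.
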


\begin{proof}
We first focus on the case when $\beta\geq 0$. We observe that, for all $a, b \geq 0$ we have
\begin{align*}
\log (e + ab) & < \log [(e + a)(e+ b)] = \log(e+ a) + \log(e+b) \\
& \leq 2[\log(e+ a)]\,[\log(e+b)]\,.
\end{align*}
This shows that, for $C= 1/\log^{\beta}(e+1)$, we have
\[
C \phi(t) \leq m_\phi(t) = \sup_{r>0} \frac{\phi(rt)}{\phi(r)} \leq 2^{\beta} \phi(t), \quad\, t>0\,,
\]
and so $m_\phi \approx \phi$. Since $\phi$ is continuous, it follows that $\phi\in \mathcal{B}$ and we have
\begin{align*}
\gamma_\phi & = \lim_{t\to 0+} \frac{\log m_\phi(t)}{\log t} =  \lim_{t\to 0+} \frac{\log \phi(t)}{\log t}
= s + \beta \lim_{t\to 0+} \frac{\log (\log(e + t^{-1}))}{\log t} = s
\end{align*}
and
\[
\delta_\phi = s + \beta \lim_{t\to \infty} \frac{\log (\log(e + t^{-1}))}{\log t} = s\,.
\]
Similarly, we deduce that  $\omega \in \mathcal{B}$  with $m_\omega \approx \omega$ and
$\gamma_\omega = \delta_\omega = s$.

Notice that
$
m_{\phi_\beta} (t) = m_{\omega_{-\beta}}(t)
$
and  $m_{\omega_\beta}(t) = m_{\phi_{-\beta}}(t)$
for $t>0$ and $\beta\in \mathbb R$, where we set $\phi_\beta: = \phi$ and $\omega_\beta:= \omega$.
These equalities yield the   conclusion for $\beta<0$ using the  case  $-\beta>0$. If $s\in (0, 1)$,
then the required statements about concave majorants follow from the preceding results combined with property (i).
\end{proof}

We will need the following lemma. For completeness we include a proof.

\begin{Lemma}
\label{estimate**}
Let $\psi \in \mathcal{B}$ with $\delta_\psi <p$ for some $0<p\leq 1$. Then there exists a~constant $C=C(\psi, p)>0$
such that, for any $h\in L^0(\lambda_n)$, we have
\[
\int_0^\infty h^{**}(t)^{p}\,\frac{\psi(t)}{t}\,dt \leq C \int_0^\infty h^{*}(t)^p\,\frac{\psi(t)}{t}\,dt\,.
\]
\end{Lemma}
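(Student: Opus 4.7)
The plan is to reduce the continuous Hardy-type inequality to a dyadic sum and exploit the inequality $(\sum a_i)^p\le\sum a_i^p$ valid for $0<p\le 1$, since the Fubini/dual Hardy argument that settles $p=1$ is unavailable when $p<1$ (there $x^p$ is concave, not convex). I may assume the right-hand side is finite and, by truncating to $h^*\,\mathbf 1_{[1/n,n]}$ and invoking monotone convergence, I may also assume $h^{**}(t)<\infty$ for every $t>0$.

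Fix $k\in\mathbb Z$. The definition of $h^{**}$ yields $h^{**}(t)\le 2\,h^{**}(2^{k+1})$ for $t\in[2^k,2^{k+1})$, while the local boundedness of $m_\psi$ inherent in $\psi\in\mathcal B$ (submultiplicative measurable functions are locally bounded) gives $\psi(t)\lesssim\psi(2^k)$ on the same interval, so
\[
\int_{2^k}^{2^{k+1}} h^{**}(t)^p\,\frac{\psi(t)}{t}\,dt \;\lesssim\; h^{**}(2^{k+1})^p\,\psi(2^k).
\]
I would then split the integral defining $h^{**}(2^{k+1})$ into dyadic annuli, use the monotonicity of $h^*$ to bound $\int_{2^j}^{2^{j+1}} h^*(s)\,ds\le 2^j h^*(2^j)$, and apply the crucial inequality $(\sum a_i)^p\le\sum a_i^p$ (valid precisely because $0<p\le 1$) to obtain
\[
h^{**}(2^{k+1})^p \;\le\; 2^{-(k+1)p}\sum_{j\le k} 2^{jp}\,h^*(2^j)^p.
\]

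Summing in $k$ and interchanging the order of summation reduces everything to the tail $\sum_{k\ge j}\psi(2^k)/2^{(k+1)p}$. Fixing $q\in(\delta_\psi,p)$, the hypothesis $\delta_\psi<p$ furnishes $\psi(2^k)\lesssim 2^{(k-j)q}\psi(2^j)$ for all $k\ge j$, so this tail is a convergent geometric series of ratio $2^{q-p}<1$ dominated by $C\,\psi(2^j)/2^{jp}$. Combining the estimates produces
\[
\int_0^\infty h^{**}(t)^p\,\frac{\psi(t)}{t}\,dt \;\lesssim\; \sum_{j\in\mathbb Z} h^*(2^j)^p\,\psi(2^j),
\]
and a reverse dyadic comparison on $[2^{j-1},2^j]$ --- using $h^*(t)\ge h^*(2^j)$ (monotonicity) together with $\psi(t)\approx\psi(2^j)$ (doubling) --- bounds the right-hand side by $C\int_0^\infty h^*(t)^p\,\psi(t)/t\,dt$, completing the proof. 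The principal obstacle is precisely the sub-unit regime $p<1$, where the convexity-based Fubini/duality argument making $p=1$ essentially immediate is inaccessible; the resolution is to deploy the inequality $(\sum a_i)^p\le\sum a_i^p$ inside the dyadic decomposition, at which point the hypothesis $\delta_\psi<p$ turns Hardy averaging into a geometric series whose convergence rate is $2^{q-p}$.
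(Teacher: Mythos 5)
Your proof is correct and follows essentially the same route as the paper: both rest on a dyadic decomposition of the average defining $h^{**}$, the subadditivity $(\sum_i a_i)^p\le\sum_i a_i^p$ valid for $0<p\le 1$, and the convergence of a geometric series supplied by $\delta_\psi<p$ through $m_\psi(2^k)\lesssim 2^{k(\delta_\psi+\varepsilon)}$. The paper's version is a bit shorter because it keeps the outer variable continuous, bounding $h^{**}(t)\le\sum_{k\ge 1}2^{-k}h^*(t/2^k)$ and changing variables $t\mapsto 2^k t$ so that submultiplicativity of $m_\psi$ does all the work, whereas you discretize both variables and therefore need the (routine) comparisons $\psi(t)\approx\psi(2^k)$ on dyadic intervals and a reverse dyadic comparison at the end; also, your truncation step is unnecessary, since every step is an inequality between nonnegative quantities justified by Tonelli/monotone convergence.
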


\begin{proof}
Observe that for any $t>0$, we have an obvious estimate
\[
h^{**}(t) = \int_0^1 h^{*}(ts)\,ds = \sum_{k=1}^{\infty} \int_{2^{-k}}^{2^{-k +1}} h^{*}(ts)\,ds\,
\leq \sum_{k=1}^{\infty} 2^{-k} h^{*}(t/2^k)\,.
\]
Combining with subadditivity of the function $ t\mapsto t^p$, defined on $ \mathbb{R}^{+} $, yields
\begin{align*}
\int_0^\infty h^{**}(t)^{p}\,\frac{\psi(t)}{t}\,dt
& \leq \sum_{k=1}^{\infty} 2^{-kp}\int_0^{\infty} h^{*}(t/2^k)^{p}\,\frac{\psi(t)}{t}\,dt \\
& = \sum_{k=1}^{\infty} 2^{-kp}\int_0^{\infty} h^{*}(t)^{p}\,\frac{\psi(2^{k}t)}{t}\,dt \\
& \leq C(p, \psi) \int_0^{\infty} h^{*}(t)^{p}\,\frac{\psi(t)}{t}\,dt\,,
\end{align*}
where $C(p, \psi):= \sum_{k=1}^{\infty} 2^{-kp}\,m_{\psi}(2^k)$.

We complete the proof by showing that $C(p, \psi)<\infty$. To see this observe that by $\delta_\psi <p$, we can
find $\varepsilon >0$ so that $\alpha:= p- \delta_\psi -\varepsilon>0$. It follows from the definition of
$\delta_\psi$ that there is an integer $k_0=k_0(\varepsilon)>0$ such that $m_\psi(2^k) \leq 2^{k(\delta_{\psi}
+ \varepsilon)}$ for each $k\geq k_0$
and hence
\[
\sum_{k= k_0}^{\infty} 2^{-kp}\,m_{\psi}(2^k) \leq \sum_{k=k_0}^{\infty} 2^{-k\alpha} <\infty\,.
\]
This concludes the proof.
\end{proof}

We now prove the following result on boundedness of the Fourier transform between corresponding Lorentz spaces
on $\mathbb{R}^n$. Before doing so, we point out that various variants of the Hausdorff-Young inequality in the setting of Lorentz spaces
are available in the literature (see, e.g.,~\cite{BH}, \cite{RS}, \cite{S}) but the result below appears to be new.

\begin{Lemma}
\label{lorentz}
Let $\varphi \in \mathcal{P}$ such that $1/2 <\gamma_\varphi \leq \delta_\varphi <1$. Then there exists a~constant $C>0$
such that, for any $f\in \Lambda_\varphi$, we have
\[
\|\widehat{f}\|_{\Lambda_{\psi}} \leq C \|f\|_{\Lambda_\varphi}\,,
\]
where $\psi(t):=t \varphi(1/t)$ for all $t>0$.
\end{Lemma}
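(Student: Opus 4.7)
The plan is to apply a Calder\'on-type pointwise interpolation between the endpoint bounds $\mathcal{F}\colon L^1\to L^\infty$ (with norm $1$) and $\mathcal{F}\colon L^2\to L^2$ (by Plancherel), and then integrate against the Lorentz weight. The strict condition $\gamma_\varphi>1/2$ will enter only at the final geometric summation step; everything else relies on $0<\gamma_\varphi\le\delta_\varphi<1$.

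As a preliminary reduction, a direct computation from the definition of $m$ shows $m_\psi(t)=t\,m_\varphi(1/t)$, hence $\gamma_\psi=1-\delta_\varphi$ and $\delta_\psi=1-\gamma_\varphi$, both of which lie in $(0,1/2)$; in particular $\psi$ is equivalent to a concave function by property~(i). Since $\psi(t)/t=\varphi(1/t)$, property~(ii) applied to $\psi$ yields $\|g\|_{\Lambda_\psi}\approx\int_0^\infty g^*(t)\varphi(1/t)\,dt$, and Lemma~\ref{estimate**} (with weight $\psi$ and $p=1$) permits replacing $g^*$ by $g^{**}$. Thus it suffices to prove
\[
\int_0^\infty\widehat f^{\,**}(t)\,\varphi(1/t)\,dt\lesssim\|f\|_{\Lambda_\varphi}.
\]

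For each $t>0$, I split $f=g+h$, where $g:=f\cdot\chi_{\{|f|>f^*(1/t)\}}$, so that $\|g\|_1=\int_0^{1/t}f^*(s)\,ds$ and $\|h\|_2^2=\int_{1/t}^\infty f^*(s)^2\,ds$. Combining $\widehat g^{\,**}(t)\le\|g\|_1$ with the Plancherel--Cauchy--Schwarz bound $\widehat h^{\,**}(t)\le t^{-1/2}\|h\|_2$, together with the subadditivity of $u\mapsto u^{**}(t)$, gives
\[
\widehat f^{\,**}(t)\le\int_0^{1/t}f^*(s)\,ds+t^{-1/2}\left(\int_{1/t}^\infty f^*(s)^2\,ds\right)^{1/2}.
\]
The contribution of the first summand to the target integral is dispatched by Fubini and the standard estimate $\int_s^\infty\varphi(u)u^{-2}\,du\approx\varphi(s)/s$, which holds because $\delta_\varphi<1$ (provable by integration by parts); this leaves $\int_0^\infty f^*(s)\varphi(s)/s\,ds\approx\|f\|_{\Lambda_\varphi}$.

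The main obstacle is the second summand, which after the substitution $u=1/t$ becomes
\[
I_2:=\int_0^\infty u^{-3/2}\varphi(u)\left(\int_u^\infty f^*(s)^2\,ds\right)^{1/2}du.
\]
I plan to discretize dyadically: setting $a_k:=2^{k/2}f^*(2^k)$ and $c_j:=2^{-j/2}\varphi(2^j)$, estimation on each dyadic interval yields $I_2\lesssim\sum_{j\in\mathbb Z}c_j\bigl(\sum_{k\ge j}a_k^2\bigr)^{1/2}$. The elementary inequality $(\sum_{k\ge j}a_k^2)^{1/2}\le\sum_{k\ge j}a_k$ (valid for $a_k\ge 0$) together with Fubini gives $I_2\lesssim\sum_k a_k\sum_{j\le k}c_j$. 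Here the hypothesis $\gamma_\varphi>1/2$ is used decisively: since for small $\varepsilon>0$ the Boyd-index estimate provides $\varphi(2u)/\varphi(u)\ge C\,2^{\gamma_\varphi-\varepsilon}$ uniformly in $u$, one obtains $c_{j+1}/c_j\ge C\,2^{\gamma_\varphi-1/2-\varepsilon}>1$, so $(c_j)$ grows geometrically and $\sum_{j\le k}c_j\lesssim c_k$. Unwinding the definitions, $\sum_k a_k c_k\approx\sum_k f^*(2^k)\varphi(2^k)\approx\|f\|_{\Lambda_\varphi}$, which completes the proof. The $\ell^2\le\ell^1$ step may look crude, but because $\gamma_\varphi>1/2$ is strict the subsequent geometric summation absorbs exactly this loss, as one may verify on the extremal example $f^*=\chi_{(0,A)}$ where both sides are of order $\varphi(A)$.
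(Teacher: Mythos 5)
Your proof is correct in substance, but it follows a genuinely different route from the paper. The paper quotes the Jodeit--Torchinsky rearrangement inequality $\int_0^t (\widehat{f}\,)^*(s)^2\,ds \lesssim \int_0^t \big(\int_0^{1/s} f^*\big)^2 ds$ and then applies its Lemma~\ref{estimate**} twice (with $p=1/2$, where $\delta_\psi<1/2$, i.e.\ $\gamma_\varphi>1/2$, enters, and with $p=1$, where $\delta_\varphi<1$ enters), finishing by density of simple functions. You instead derive a cruder but sufficient Calder\'on-type bound for $\widehat{f}^{\,**}(t)$ from scratch, by splitting $f$ at height $f^*(1/t)$ and using only $\mathcal F\colon L^1\to L^\infty$ and Plancherel, and then you handle the weighted integrals by hand: Fubini plus $\int_s^\infty \varphi(u)u^{-2}du\approx \varphi(s)/s$ (using $\delta_\varphi<1$) for the $L^1$-part, and dyadic discretization, $\ell^2\subset\ell^1$, and geometric summation (using $\gamma_\varphi>1/2$) for the $L^2$-tail. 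This buys a self-contained, elementary argument that avoids both the external Jodeit--Torchinsky result and the Hardy-type lemma; the paper's route is shorter given those tools and its lemma is reused elsewhere. Two small repairs are needed in your write-up: (i) the claimed identities are not exact --- one only has $\|g\|_1\le \int_0^{1/t}f^*$ and $\|h\|_2^2\le \int_{1/t}^\infty (f^*)^2+\tfrac1t f^*(1/t)^2$ because of possible plateaus of $f^*$; since $\tfrac1t f^*(1/t)\le\int_0^{1/t}f^*$, your displayed pointwise bound survives with a harmless constant; (ii) the one-step ratio bound $c_{j+1}/c_j\ge C\,2^{\gamma_\varphi-1/2-\varepsilon}$ with an unspecified $C$ does not literally yield geometric growth; one should instead use $m_\varphi(2^{-N})\le 2^{-N(\gamma_\varphi-\varepsilon)}$ for a fixed large $N$ and sum over blocks of $N$ dyadic indices, which still gives $\sum_{j\le k}c_j\lesssim c_k$. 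Neither point affects the validity of the overall argument; you may also wish to note (as the paper does via density) that $\Lambda_\varphi\subset L^1+L^2$ under the stated index assumptions, so $\widehat{f}$ is well defined.
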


\begin{proof}
Since $\varphi$ is concave, it is easy to check that $\psi$ is a~concave function
on $(0, \infty)$. Clearly, $m_{\psi}(t) = t\,m_\varphi(1/t)$ for all $t>0$ and so
\begin{align*}
\gamma_{\psi} = 1- \delta_\varphi \quad\, \text{and\,\, $\delta_{\psi} = 1- \gamma_\varphi$}\,.
\end{align*}
Hence it follows by assumption on indices of $\varphi$ that $0<\gamma_{\psi} \leq \delta_{\psi} <1/2$.

We use the pointwise estimate for the Fourier transform due to Jodeit and Torchinsky \cite[Theorem 4.6]{JT}, which
states that there exists a constant $D>0$ such that, for any $f\in L^1(\mathbb{R}^n) + L^2(\mathbb{R}^n)$, we have
\[
\int_0^t (\widehat{f}\,)^{*}(s)^2\,ds \leq D \int_0^t \bigg(\int_0^{\frac{1}{s}} f^{*}(\tau)\,d\tau\bigg)^2\,ds\,.
\]
Combining this estimate with
\[
(\widehat{f}\,)^{*}(t) \leq \frac{1}{t^{1/2}}\bigg(\int_0^t ((\wh{f})^{*}(s))^2\,ds \bigg)^{\frac{1}{2}}, \quad\ t>0\,,
\]
we obtain that for any simple function $f\in \Lambda_{\varphi}$,
\begin{align*}
\|\widehat{f}\|_{\Lambda_\psi} & \leq D \int_0^\infty \bigg(\frac{1}{t}
\int_0^t\bigg(\int_0^{\frac{1}{s}} f^{*}(\tau)\,d\tau\bigg)^2\,ds\bigg)^{\frac{1}{2}}\,\frac{\psi(t)}{t}\,dt
= \int_0^\infty g^{**}(t)^{\frac{1}{2}}\,\frac{\psi(t)}{t}\,dt\,,
\end{align*}
where $g$ is given  by
\[
g(t):= \bigg(\int_0^{\frac{1}{t}} f^{*}(\tau)\,d\tau\bigg)^{2}, \quad\, t>0\,.
\]
Clearly, $g$ is non-negative and nonincreasing and so it follows from Lemma \ref{estimate**} (by $\delta_\psi <1/2$) that
there exists $C>0$ such that
\[
\int_0^{\infty} g^{**}(t)^{\frac{1}{2}} \frac{\psi(t)}{t} \,dt \leq C\int_0^\infty g(t)^{\frac{1}{2}}\frac{\psi(t)}{t}\,dt\,.
\]
In consequence, we obtain
\begin{align*}
\|\widehat{f}\|_{\Lambda_{\psi}} & \lesssim \int_0^{\infty} \bigg(\int_0^{\frac{1}{t}} f^{*}(s)\,ds\bigg)\frac{\psi(t)}{t}\,dt
= \int_0^{\infty} f^{**}(t^{-1}) \frac{\psi(t)}{t^2}\,dt \\
& = \int_0^{\infty} f^{**}(t) \psi(t^{-1})\,dt = \int_0^{\infty} f^{**}(t)\frac{\varphi(t)}{t}\,dt \\
& \lesssim \int_0^{\infty} f^{*}(t)\frac{\varphi(t)}{t}\,dt\,,
\end{align*}
where the last estimate follows by Lemma \ref{estimate**} with $p=1$ (by $\delta_\varphi <1$).

Recall that $\gamma_\varphi >0$ implies that
\[
\|f\|_{\Lambda_\varphi} \approx \int_0^{\infty} f^{*}(t)\,\frac{\varphi(t)}{t}\,dt\,.
\]
Thus the required estimate follows by density of simple functions in the Lorentz space $\Lambda_\varphi$.
\end{proof}

\section{Preliminary results}

In this section we prove various auxiliary results that will be crucial in the proof of Theorem~\ref{1/2MainResult}.
We start with an estimate for an integral.

\begin{Lemma}\label{below}
If $k\ge 2$,
$\alpha_1, \dots, \alpha_k >1$, $r_1,\dots,r_k>0$ are such that $(\alpha_1-1)/r_1 \leq (\alpha_2-1)/r_2 \leq \dots
\leq (\alpha_k-1)/r_k$ and $a>1$, then
\begin{equation}
\label{induction}
\idotsint\limits_{\substack{u_{ 1} ,  \dots , u_{ k}\ge 1 \\  u_{ 1}^{r_{ 1}}\cdots u_{ k}^{r_{ k}} >a} }
u_{1}^{-\alpha_{1}}\cdots u_{ k}^{-\alpha_{ k}} \,du_{1}\cdots du_{k} \approx a^{\f {1-\alpha_1}{r_1}} [\log (e+ a )]^{d'}\,,
\end{equation}
up to multiplicative constants independent of $a$. Here, $d'$ is the number of elements in
$\{ (\alpha_2-1)/r_2,(\alpha_3-1)/r_3,\dots, (\alpha_k-1)/r_k \}$ that are equal to $(\alpha_1-1)/r_1$.
\end{Lemma}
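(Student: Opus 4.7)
First I would reduce to a cleaner integrand by the substitution $v_j = u_j^{r_j}$, which converts the integral, up to the multiplicative constant $\prod_{j} r_j^{-1}$, to
\[
\tilde I_k(a) := \idotsint\limits_{\substack{v_j \geq 1 \\ v_1\cdots v_k > a}} \prod_{j=1}^{k} v_j^{-\beta_j - 1}\,dv_1\cdots dv_k,
\]
where $\beta_j := (\alpha_j-1)/r_j$, so $0 < \beta_1 \leq \cdots \leq \beta_k$ and $d'$ equals the number of $j \in \{2,\dots,k\}$ with $\beta_j = \beta_1$. The goal then becomes to prove $\tilde I_k(a) \approx a^{-\beta_1}[\log(e+a)]^{d'}$.

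Next I would argue by induction on $k$. The base case $k=1$ reduces to $\int_a^{\infty} v^{-\beta_1-1}\,dv = \beta_1^{-1}a^{-\beta_1}$ with $d'=0$. For the inductive step I would split the outermost integration over $v_k$ into two regions. When $v_k > a$ the constraint $v_1\cdots v_k > a$ is automatic, producing a contribution of order $a^{-\beta_k}$, which is harmless since $\beta_k \geq \beta_1$ implies $a^{-\beta_k} \leq a^{-\beta_1} \leq a^{-\beta_1}[\log(e+a)]^{d'}$. When $1 \leq v_k \leq a$, the inductive hypothesis applied with threshold $a/v_k$ yields an inner integral comparable to $(a/v_k)^{-\beta_1}[\log(e+a/v_k)]^{d_1'}$, where $d_1'$ counts the indices $j \in \{2,\dots,k-1\}$ with $\beta_j=\beta_1$. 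Pulling out $a^{-\beta_1}$, the residual one-variable integral is
\[
\int_1^{a} v_k^{\beta_1-\beta_k-1}\,[\log(e+a/v_k)]^{d_1'}\,dv_k.
\]
Two subcases remain: if $\beta_k > \beta_1$ (so $d'=d_1'$), the change of variable $u=a/v_k$ turns it into $\int_1^{a} u^{\beta_k-\beta_1-1}[\log(e+u)]^{d_1'}\,du \approx [\log(e+a)]^{d_1'}$, since the positive power $\beta_k-\beta_1$ makes the integrand peak at the upper end. If instead $\beta_k = \beta_1$ (so $d'=d_1'+1$), the substitution $u=\log(a/v_k)$ converts the integral to $\int_0^{\log a}[\log(e+e^u)]^{d_1'}\,du \approx [\log(e+a)]^{d_1'+1}$ using $\log(e+e^u) \approx u+1$. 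Assembling these cases gives $\tilde I_k(a) \approx a^{-\beta_1}[\log(e+a)]^{d'}$.

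The main obstacle I anticipate is the two-sidedness of the estimate: since the conclusion is an $\approx$, each inductive step must produce matching upper and lower bounds, and in particular the lower bound in the tie case $\beta_k = \beta_1$ requires applying the inductive hypothesis on a properly bounded subregion, for example $v_k \in [1, a/2]$, rather than merely invoking a trivial global bound. A probabilistic reformulation can serve as a sanity check: the logarithmic substitution $w_j = \log v_j$ transforms $\tilde I_k(a)$ into $\bigl(\prod_j \beta_j^{-1}\bigr)\Pr[X_1+\cdots+X_k > \log a]$, where the $X_j$ are independent exponential random variables with rates $\beta_j$, and the classical tail asymptotics of the hypoexponential distribution then predict exactly the leading behavior $(\log a)^{d'} a^{-\beta_1}$ when $\beta_1$ has multiplicity $d'+1$ among the $\beta_j$'s.
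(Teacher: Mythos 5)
Your argument is correct in substance, but it organizes the induction differently from the paper. After the normalizing substitution $v_j=u_j^{r_j}$ (so the exponents become $-\beta_j-1$ with $\beta_j=(\alpha_j-1)/r_j$), you integrate out the variable with the \emph{largest} ratio, $v_k$, split according to $v_k>a$ versus $1\le v_k\le a$, and apply the inductive hypothesis with the moving threshold $a/v_k$, reducing matters to a one-variable residual integral; your base case is $k=1$. The paper keeps the original variables, starts at $k=2$, and integrates out the variable with the \emph{smallest} ratio, $u_1$: this produces $\max\{1,(au_2^{-r_2}\cdots u_k^{-r_k})^{1/r_1}\}^{1-\alpha_1}$, the region where the max equals $1$ is exactly where the inductive hypothesis applies with the threshold $a$ unchanged, and the complementary region $u_2^{r_2}\cdots u_k^{r_k}\le a$ is estimated directly by a product of one-dimensional integrals over $[1,a^{1/r_j}]$ (upper bound) and over $[1,a^{1/((k-1)r_j)}]$ (lower bound). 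Your scheme buys a cleaner bookkeeping of the tie/no-tie dichotomy (it is decided by a single one-dimensional integral) and the hypoexponential-tail reformulation is a genuine sanity check; the paper's scheme avoids the variable threshold and hence the uniformity issues near threshold $1$ and for $a$ close to $1$, which you correctly flag and which are easily handled as you indicate (for $1<a\lesssim 1$ both sides are comparable to constants, and for large $a$ restricting to $v_k\in[1,a/2]$, and to $v_k\in[1,\sqrt{a}]$ in the tie case, gives the lower bound).

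One bookkeeping slip, which does not affect the outcome: after the substitution $u=a/v_k$ in the case $\beta_k>\beta_1$, the residual integral equals $a^{\beta_1-\beta_k}\int_1^a u^{\beta_k-\beta_1-1}[\log(e+u)]^{d_1'}\,du$, not $\int_1^a u^{\beta_k-\beta_1-1}[\log(e+u)]^{d_1'}\,du$; since the latter integral is $\approx a^{\beta_k-\beta_1}[\log(e+a)]^{d_1'}$, the missing prefactor and the misstated asymptotics cancel, and your conclusion that the residual integral is $\approx [\log(e+a)]^{d_1'}$ is correct. (More directly: bound $\log(e+a/v)\le \log(e+a)$ and integrate $v^{\beta_1-\beta_k-1}$ over $[1,\infty)$ for the upper bound, and restrict to $v\in[1,2]$ for the lower bound.)
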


\begin{proof}
To prove  \eqref{induction} we proceed by induction. First we verify the case $k=2$. In this case the $u_1$ integral
is over the region $u_1\ge \max\big\{ 1, (au_2^{-r_2})^{1/r_1}\big\}$ and so
evaluating the $u_1$ integral gives
\begin{equation}
\label{063gyui}
\iint\limits_{ \substack{u_{ 1} ,  u_{ 2}\ge 1 \\  u_{ 1}^{r_{ 1}}  u_{ 2}^{r_{ 2}} >a} }
u_{ 1}^{-\alpha_1}  u_{ 2}^{-\alpha_2}\,du_1 du_2
= C   \int_{u_2=1}^{\nf} u_2^{-\alpha_2}\,\max \big\{1, (au_2^{-r_2})^{1/r_1}\big\}^{1-\alpha_1}\,du_2 \,.
\end{equation}
If the maximum equals $1$, then the  integral is over the region $a^{1/r_2} \le u_2 <\nf$ and the $u_2$
integration produces $C a^{1/r_2(1-\alpha_2)} \leq C a^{1/r_1(1-\alpha_1)}$. Thus, the corresponding part
of~\eqref{063gyui} is bounded from above by the right-hand side of~\eqref{induction}, and if $a\in (1,2)$
then one has the lower bound as well.

If the maximum equals $(au_2^{-r_2})^{1/r_1}$, then the integral is over the region $1 \le u_2\le a^{1/r_2}$
and so the corresponding part of~\eqref{063gyui} becomes
\[
\int_{u_2=1}^{a^{\f 1{ r_2}}} u_2^{-\alpha_2} (au_2^{-r_2})^{(1 -\alpha_1)/r_1}\,du_2
= a^{\f {1-\alpha_1}{r_1} }    \int_{u_2=1}^{a^{\f 1{r_2}}} u_2^{-\alpha_2 - {r_2}/{r_1}(1-\alpha_1)}\,du_2\,.
\]
Now if $(\alpha_2-1)/r_2>(\alpha_1-1)/r_1$ then this is bounded from above by the right-hand side of~\eqref{induction}
with $d'=0$ and if $(\alpha_2-1)/r_2=(\alpha_1-1)/r_1$ then the same estimate holds with $d'=1$. In addition, one has
the corresponding lower bound if $a>2$. This concludes the proof of estimate  \eqref{induction} when $k=2$.

Assume by induction that \eqref{induction} holds for an integer $k-1$ (in place of $k$). Then
\[
\idotsint\limits_{ {\substack{ u_{1} ,  \dots , u_{ k}\ge 1 \\  u_{ 1}^{r_1} \cdots u_{ k}^{r_k }  > a}}}
u_{1}^{-\alpha_1}\cdots u_{k}^{-\alpha_k}\,du_{1}\cdots du_{k} = \int_{u_2 =1}^\nf \cdots \int_{u_k=1}^\nf
\frac{\int_{u_1= L}^\nf u_1^{-\alpha_1}\,du_1} {u_k^{\alpha_k}\cdots u_2^{\alpha_2}}\,\,d u_k\cdots  d u_2\,,
\]
where $L=\max\{1,  (a u_2^{-r_2} \cdots u_k^{-r_k})^{1/r_1}\}$. As $\alpha_1>1$, the $u_1$ integral is convergent
and the preceding expression equals
\begin{equation}
\label{gy350}
c \int_{u_2 =1}^\nf \cdots \int_{u_k=1}^\nf \max\Big\{1,  (a u_2^{-r_2} \cdots u_k^{-r_k})^{\f 1{r_1}}\Big\}^{1-\alpha_1}
\frac{d u_k}{u_k^{\alpha_k}} \cdots \frac{du_2}{u_2^{\alpha_2}}\,.
\end{equation}
The part of the  integral in \eqref{gy350} over the set where the maximum equals $1$ is
\begin{equation}\label{gy351}
c \idotsint\limits_{\substack{u_{2}, \dots , u_{k} \ge 1 \\  u_{ 2}^{r_{ 2}}\cdots u_{ k}^{r_{ k}} > a} }
u_k^{-\alpha_k}\cdots u_2^{-\alpha_2}\,du_{k} \cdots du_{2} \approx {a^{\f {1-\alpha_2}{r_2}}}
\log^{d{''}}(e + a)\,,
\end{equation}
where the equivalence holds by the induction hypothesis and $d{''}$ is the number of elements in
$\{(\alpha_3-1)/r_3,\dots, (\alpha_k-1)/r_k \}$ that are equal to  $ (\alpha_2-1)/r_2$. Note that if
$(\alpha_1-1)/r_1<(\alpha_2-1)/r_2$, then the expression on the right in  \eqref{gy351} is bounded from above by
\begin{equation}
\label{gy352}
C a^{\f {1-\alpha_1}{r_1}} \log^{d'}(e+ a)\,.
\end{equation}
Now if $(\alpha_1-1)/r_1=(\alpha_2-1)/r_2$, then we have $d'=d''+1$ and then the expression on the right in
\eqref{gy351} is also bounded by  \eqref{gy352}. In addition, we also have the corresponding lower
bound in both cases within the range $a\in (1,2)$. We now turn to the part of the integral  in \eqref{gy350}
over the set where the maximum equals $ (a u_2^{-r_2} \cdots u_k^{-r_k})^{\f 1{r_1}}$. It can be expressed as
\begin{equation}\label{E:second_term}
c a^{ \f {1-\alpha_1}{r_1} }   \idotsint\limits_{ \substack{u_{ 2} ,  \dots , u_{ k}\ge 1 \\
u_{ 2}^{r_{ 2}}\cdots u_{ k}^{r_{ k}} \le a} } u_k^{\f{ r_k}{r_1}(\alpha_1-1)
- \alpha_k} \cdots u_2^{\f{ r_2}{r_1}(\alpha_1-1)-\alpha_2}\,du_{k} \cdots du_{2}\,.
\end{equation}
First, we observe that we have the following upper bound for~\eqref{E:second_term}:
\begin{align*}
& c a^{ \f {1-\alpha_1}{r_1} }  \int_1^{a^{\frac{1}{r_2}}} \cdots \int_1^{a^{\frac{1}{r_k}}}
u_k^{\f{ r_k}{r_1} (\alpha_1-1)-\alpha_k}\cdots u_2^{\f{ r_2}{r_1}(\alpha_1-1)-\alpha_2}\,du_{k} \cdots  du_{2} \\
& \le C a^{\f {1-\alpha_1}{r_1}} \log^{d'}(e + a)\,,
\end{align*}
where the logarithm appears exactly when $ \f{ r_k}{r_1}=\f{\alpha_k-1}{\alpha_1-1}$ ($d'$ times) and the remaining
integrals produce a~constant. Conversely, if $a>2$ then we have an analogous lower bound for~\eqref{E:second_term} as well:
\begin{align*}
& c a^{ \f {1-\alpha_1}{r_1} }  \int_1^{a^{\frac{1}{(k-1)r_2}}} \cdots \int_1^{a^{\frac{1}{(k-1)r_k}}}
u_k^{\f{ r_k}{r_1} (\alpha_1-1)-\alpha_k}\cdots u_2^{\f{ r_2}{r_1}(\alpha_1-1)-\alpha_2}\,du_{k} \cdots  du_{2} \\
& \approx C a^{\f {1-\alpha_1}{r_1}} \log^{d'}(e + a) \,.
\end{align*}
The claim follows.
\end{proof}

We denote by $\mathcal{M}$ the strong maximal operator defined at point as the supremum of the averages of a given function
over all rectangles with sides parallel to the axes that contain the point. Then we define
\[
\mathcal{M}_{L^q} (g) (x_1,\dots , x_n) = \mathcal{M}(|g|^q) (x_1,\dots , x_n)^{\frac 1q}\,,
\]
a version of the strong maximal function with respect to an exponent $q\in (1,\infty)$.

\begin{Lemma}
\label{lemmas1=s2}
Let  $0<1/q<s_1\le s_2\le \cdots\le s_n<1$. Suppose that exactly $d$ of the numbers $s_2,\dots, s_n$ are equal to $s_1$,
where $1\le d\le n-1$. Then for  $g$ in $L^1_{loc} (\rn)$ with $\mathcal{M}_{ L^q} (g)(0) = 1 $ and $a>0$ we have
\begin{equation}\label{abcabc}
\left\lvert \left\{ y\in \rn\setminus [-1,1]^n : \,\, \dfrac{ \lvert g (y )\rvert}{ \prod_{i=1}^n
(1+\lvert y_i\rvert)^{s_i}} >a \right\} \right\rvert \le C a^{-\f1{s_1}} \log^{d}\Big(e+\f 1 a\Big)\,.
\end{equation}	
\end{Lemma}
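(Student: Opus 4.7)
The plan is to perform a dyadic decomposition of $\mathbb{R}^n \setminus [-1,1]^n$ into shells and, in each shell, compare two bounds on the level set: the trivial volume bound and a Chebyshev bound coming from the hypothesis $\mathcal M_{L^q}(g)(0)=1$. Summing and converting to an integral, the two regimes should produce contributions matching those from Lemma~\ref{below}, yielding the desired power of the logarithm $d$.

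More concretely, for each $k=(k_1,\dots,k_n)\in\mathbb{Z}_{\ge 0}^n\setminus\{0\}$ I would introduce the shell
\[
R_k=\{y\in\rn:\,|y_i|\in[2^{k_i-1},2^{k_i}]\text{ if }k_i\ge 1,\;|y_i|\le 1\text{ if }k_i=0\}.
\]
These cover $\rn\setminus[-1,1]^n$ (up to a null set), and on $R_k$ one has $\prod_{i=1}^n(1+|y_i|)^{s_i}\approx\prod_i 2^{k_i s_i}$, so the level set restricted to $R_k$ is contained in $\{y\in R_k:|g(y)|>c_0 a\prod 2^{k_i s_i}\}$ for a suitable $c_0$. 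Since $R_k$ is contained in the centered rectangle $[-2^{k_1},2^{k_1}]\times\cdots\times[-2^{k_n},2^{k_n}]$, the hypothesis gives $\int_{R_k}|g|^q\le C\prod_i 2^{k_i}$, and Chebyshev then furnishes
\[
|E_a\cap R_k|\le C\prod_i 2^{k_i}\cdot\min\bigl(1,(a\prod 2^{k_i s_i})^{-q}\bigr).
\]

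Passing from the sum over $k$ to an integral via $\sum_{k_i\ge 0}F(2^{k_1},\dots,2^{k_n})\approx\int_{u_i\ge 1}F(u)\,\prod du_i/u_i$, the target reduces to proving
\[
I_A:=a^{-q}\int_{u_i\ge 1,\,\prod u_i^{s_i}\ge a^{-1}}\prod u_i^{-s_i q}\,du\;+\;I_B:=\int_{u_i\ge 1,\,\prod u_i^{s_i}< a^{-1}}du\;\lesssim\;a^{-1/s_1}\log^d(e+1/a).
\]
For $I_A$ I would apply Lemma~\ref{below} with $\alpha_i=s_i q$, $r_i=s_i$: the condition $s_1>1/q$ ensures $\alpha_i>1$, the ordering $s_1\le\dots\le s_n$ gives the required monotonicity of $(\alpha_i-1)/r_i=q-1/s_i$, and the number of $i\ge 2$ with $(\alpha_i-1)/r_i=(\alpha_1-1)/r_1$ is exactly $d$. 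Lemma~\ref{below} then produces $I_A\approx a^{-q}\cdot a^{q-1/s_1}\log^d(e+1/a)$, as required.

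The main obstacle is $I_B$, which does not fit Lemma~\ref{below} directly because its integrand has no decay. I would handle it by iterated integration, performing the $u_1$-integral last: for fixed $(u_2,\dots,u_n)$ with $\prod_{i\ge 2}u_i^{s_i}<a^{-1}$ one has $u_1\in[1,(a^{-1}/\prod_{i\ge 2}u_i^{s_i})^{1/s_1}]$, yielding
\[
I_B\lesssim a^{-1/s_1}\int_{u_i\ge 1,\,i\ge 2,\,\prod_{i\ge 2}u_i^{s_i}< a^{-1}}\prod_{i\ge 2}u_i^{-s_i/s_1}\,du_2\cdots du_n.
\]
The $n-1-d$ indices $i$ with $s_i>s_1$ give integrable factors $u_i^{-s_i/s_1}$ on $[1,\infty)$, contributing only constants; each of the $d$ indices $i$ with $s_i=s_1$ gives $u_i^{-1}$ integrated over the effective range $u_i\le a^{-1/s_1}$, producing one factor of $\log(a^{-1/s_1})$. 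This delivers $I_B\lesssim a^{-1/s_1}\log^d(e+1/a)$ with the \emph{same} power of the logarithm as $I_A$, and combining the two estimates yields \eqref{abcabc}. The delicate point is verifying that the combinatorics of the indices $s_i=s_1$ produce precisely $d$ logarithmic factors in both $I_A$ and $I_B$, so that the two contributions balance.
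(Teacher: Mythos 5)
Your proposal is correct and follows essentially the same route as the paper's proof: the same dyadic shells, the same Chebyshev bound via $\mathcal{M}_{L^q}(g)(0)=1$, the same passage to an integral split into the regime where the Chebyshev term wins (handled by Lemma~\ref{below} with $\alpha_i=s_iq$, $r_i=s_i$, giving exactly $d$ logarithms) and the complementary regime handled by iterated integration in which the $d$ indices with $s_i=s_1$ each contribute one logarithm; your reduction directly to $[1,\infty)^n$ merely streamlines the paper's splitting into partitions $S_{I,J}$ according to which coordinates are below $1$. The only (trivial) point left implicit is the case $a\ge 1$, where Lemma~\ref{below} does not apply because its parameter $1/a$ is not larger than $1$, but there $I_B$ is empty and $I_A\le Ca^{-q}\le Ca^{-1/s_1}$ since $q>1/s_1$, exactly as in the paper.
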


\begin{proof}
For $j_1, \dots , j_n$ nonnegative integers define
\[
R_{j_1,\dots , j_n} =\Bigg\{(y_1,\dots , y_n)\in \rn: \,\,
\begin{cases} 2^{j_i}< |y_i|\le 2^{j_i+1} &\text{if $j_i\ge 1$} \\
|y_i|\le 1 &\text{if $j_i=0$} ,
\end{cases}
\q 1\leq i \leq n \Bigg\}
\]
and notice that the family of rectangles $R_{j_1,\dots , j_n}$ is a tiling of $\rn$ when $j_1, \dots , j_n$ run over all
nonnegative integers.
	
In the sequel we denote by $y$ the vector $(y_1,\dots, y_n)$. For $a>0$ and $j_1,\dots, j_n$ nonnegative integers, we have
\begin{equation*}
\lvert \{y \in R_{j_1,\dots ,j_n}:\,\, \lvert g(y ) \rvert >a\} \rvert  \leq \dfrac{1}{a^q} \int_{R_{j_1,\dots, j_n}} |g(y )|^q dy
\leq a^{-q}\,2^{j_1+\cdots +j_n+2n}
\end{equation*}
since we are assuming that $\mathcal{M}_{ L^q} (g)(0) = 1 $. Thus, in view of the trivial estimate $|R_{j_1,\dots , j_n}|
\le 2^{j_1+\cdots + j_n+2n}$, we obtain
\begin{equation}
\label{kdkkkd3}
\lvert \{y \in R_{j_1,\dots, j_n}:\,\, \lvert g(y ) \rvert >a\} \rvert \leq
2^{2n} 2^{j_1+\cdots +j_n } \min \big\{1, a^{-q}\big\}\,.
\end{equation}
It follows from \eqref{kdkkkd3} that, for all $j_1,\dots, j_n\ge 0$, we have
\begin{align}
\begin{split}\label{Doublest}
&\left\lvert \left\{ y\in R_{j_1,\dots, j_n} :\,\, \dfrac{ \lvert g(y )\rvert}{(1+\lvert y_1\rvert)^{s_1}\cdots
(1+\lvert y_n\rvert)^{s_n}} >a \right\} \right\rvert  \\
&\qquad\qquad\qquad\qquad\le \,\, 2^{2n} 2^{j_1+\cdots +j_n} \min\big\{1, (a 2^{ j_1s_1+ \cdots j_ns_n})^{-q}\big)\,.
\end{split}
\end{align}

We let $g_1= g\chi_{\rn\setminus R_{0,\dots , 0}}$. Using  \eqref{Doublest}, we get that
\begin{align}		
&\left\lvert \left\{ y\in \rn : \,\, \dfrac{ \lvert g_1(y )\rvert}{(1+\lvert y_1\rvert)^{s_1}\cdots (1+\lvert y_n\rvert)^{s_n}} >a \right\} \right\rvert  \notag \\
	& \le   \quad   \sum_{\substack{ j_1,\dots, j_n=0\\j_1+\cdots +j_n>0}}^{\infty}  \left\lvert \left\{ y\in R_{j_1,\dots , j_n} : \,\, \dfrac{ \lvert g(y )\rvert}{(1+\lvert y_1\rvert)^{s_1}\cdots (1+\lvert y_n\rvert)^{s_n}} >a \right\} \right\rvert  \notag\\
	& \le   \sum_{\substack{ j_1,\dots ,j_n=0\\j_1+\cdots+ j_n>0}}^{\infty}  2^{j_1+\cdots + j_n+2n}
\min\big\{1, a^{-q} (2^{j_1s_1+\cdots +j_ns_n})^{-q}\big\} \notag \\
	& \le 2^{2n+s_1q+\cdots+s_nq}\idotsint\limits_{[0,\nf)^n\setminus [0,1]^n}
	\min\!\big\{1, a^{-q} \prod_{\rho=1}^n \max\{1,t_\rho^{s_\rho}\}^{-q}\big\}\,dt_1\cdots dt_n \notag \\
	& =: 2^{2n+s_1q+\cdots+s_nq} \, I (a,n), \label{Ian}
	\end{align}
where the last inequality follows from the monotonicity of the integrand. Let $S$ be the set of all
$(t_1, \dots t_n)\in [0,\nf)^n\setminus [0,1]^n$ such that
\begin{equation}\label{est1010}
a \max\{1, t_1^{s_1}\} \cdots \max\{1, t_n^{s_n}\} \leq 1\,.
\end{equation}
If $S$ is nonempty, then we must have $a\le 1$.
Let us fix a two-set partition   $I=\{i_1, \dots , i_m\}$ and  $J=\{j_1,\dots , j_k\}$ of $\{1, 2, \dots , n\}$.
We split $S$ as a union of sets $S_{I,J}$ (ranging over all such pairs of partitions) for which
\begin{equation}\label{est1011gggg}
(t_1, \dots, t_n) \in S_{I,J} \iff  \textup{$t_i\le 1$ for all $i\in I$ and  $ t_j>1$ for all $j\in J$.}
\end{equation}
Then the $n$-dimensional measure  $|S_{I,J}|$ of $S_{I,J}$ is at most the $k$-th dimensional measure of
\[
S_{I,J}^k=\Big\{(t_{j_1}, \dots , t_{j_k}):\,\, t_{j_1}^{s_{j_1}}\cdots t_{j_k}^{s_{j_k}} \le \f1a\Big\} \cap [1,\nf)^{k}\,,
\]
as the vector of the remaining $m$ coordinates is contained in the cube $[0,1]^m$ which has $m$-th dimensional measure equal to $1$.
Let us assume, without loss of generality, that
$s_{j_1}\le s_{j_2}\le  \dots \le  s_{j_k} $ (i.e., $j_1<j_2<\cdots<j_k$).

We make the following observation: if $(t_{j_1}, \dots  , t_{j_k}) \in S_{I,J}^k$, then
\[
1\le t_{j_i} \le a^{-\f 1{s_1}}, \quad\, 1\le i\le k\,.
\]
Indeed, as all $t_{j_i} \ge 1$, we have
$1\le  t_{j_i}^{s_{j_i}} \le t_{j_1}^{s_{j_1}}\cdots t_{j_k}^{s_{j_k}} \le a^{-1}$, which  implies that
$1\le t_{j_i}\le a^{-1/s_{j_i}} \le a^{-1/s_{1}}$. Thus we conclude that
 \begin{align*}
| S_{I,J}^k | \le &\, \int_{t_{j_2}=1}^{a^{- \f1{s_1}}} \cdots \int_{t_{j_k}=1}^{a^{- \f1{s_1}}} \Big| \Big\{  t_{j_1} :  \,\,
1 \le  t_{j_1} \le a^{-\f1{s_{j_1}}}
t_{j_2}^{-\f { s_{j_2}}{s_{j_1}} }\cdots t_{j_k}^{-\f{ s_{j_k}}{ s_{j_1}}} \Big\} \Big|
\, d t_{j_k}\cdots  dt_{j_2} \\
\le & \,\, a^{-\f1{s_{j_1}}} \chi_{a\le 1}\int_{t_{j_2}=1}^{a^{- \f1{s_1}}}\cdots \int_{t_{j_k}=1}^{a^{- \f1{s_1}}}
t_{j_2}^{-\f { s_{j_2}}{s_{j_1}} }\cdots t_{j_k}^{-\f{ s_{j_k}}{ s_{j_1}}}
\, d t_{j_k}\cdots  dt_{j_2} \\
\le & \,\, C a^{-\f1{s_{j_1}}}\chi_{a\le 1} \log^{d'}\Big( \f1 {a} \Big)^{\f 1{s_1}}  \le
C' a^{-\f1{s_{ 1}}}\chi_{a\le 1} \log^{d} \Big(e+ \f1 {a} \Big)\,,
\end{align*}
where $d'$ is the number of elements of the set    $   \{ s_{j_2}  , \dots ,    s_{j_k}\} $ that are equal to $s_{j_1}$.
The integrals associated with these variables produce a logarithm, while all other integrals are convergent   on $[1,\nf)$.
The last inequality holds independently of the relationship between $d$ and $d'$ if $s_{j_1}>s_1$, while if $s_{j_1}=s_1$
then it is satisfied since $d'\leq d$. Summing over all partitions $(I,J)$ of $\{1,2,\dots , n\}$ yields the required estimate
for $I(a,n)$, defined in \eqref{Ian}, whenever \eqref{est1010} holds.

Now let $S'$ be the set of all $(t_1, \dots t_n)\in [0,\nf)^n\setminus [0,1]^n$ such that
\begin{equation}\label{est1011}
a \max\{1,t_1^{s_1}\}\cdots  \max\{1, t_n^{s_n}\} > 1 \,.
\end{equation}
Then $S'$ is complementary to $S$ in $[0,\nf)^n\setminus [0,1]^n$.
Writing $S'$ as a union of sets $S'_{I,J}$ over all partitions $(I,J)$ of $\{1,2,\dots, n\}$
as in \eqref{est1011gggg}, matters reduce to estimating the  integral
\begin{equation}\label{ioiejjd}
\f{1}{a^q} \idotsint\limits_{ \substack{t_{j_1}, \dots, t_{j_k} \ge 1 \\
t_{j_1}^{s_{j_1}}\cdots t_{j_k}^{s_{j_k}} >\f1a}}
(t_{j_1}^{s_{j_1}}\cdots t_{j_k}^{s_{j_k}})^{-q}\,dt_{j_1} \cdots dt_{j_k}
\end{equation}
for each subset $\{j_1, \dots , j_k\}$ of $\{1,\dots ,n\}$. Now if $a>1$, the integral in \eqref{ioiejjd} is over the set
$[1,\nf)^k$ and, as $s_{j_1}q>1, \dots , s_{j_k}q>1$, the expression
in \eqref{ioiejjd}  is bounded by
\[
C \, a^{-q} \chi_{a>1}  \le C \, a^{-\f1{s_1}}\,,
\]
since $q>1/s_1$. So we focus attention to the case $a\le 1$ in \eqref{ioiejjd}. Let us again assume, without loss of generality,
that $s_{j_1}\le s_{j_2} \le \dots \le s_{j_k}$. To estimate  \eqref{ioiejjd} we use Lemma~\ref{below}. Inequality \eqref{induction}
in this lemma implies that, if $d'$ is the number of terms in $\{s_{j_2}, \dots, s_{j_k}\}$ that are equal to $s_{j_1}$, then
\eqref{ioiejjd} is bounded by
\[
C a^{-\f 1{s_{j_1}}} \log^{d'}\big(e+ a^{-1}\big) \le C a^{-\f 1{s_{1}}}
\log^{d}\big(e+ a^{-1}\big)\,,
\]
where the last inequality is due to the fact that either $s_{j_1}=s_1$ and $d'\le d$, or $s_{j_1} <s_1$ (as $a<1$).

Summing over all partitions $(I,J)$ of $\{1,2,\dots , n\}$ yields the required estimate for $I(a,n)$, defined in
\eqref{Ian}, whenever \eqref{est1011} holds. This completes the proof of \eqref{abcabc}.
\end{proof}

\begin{Corollary}\label{cor87}
Let $0<1/q<s_1 \le s_2 \le \cdots \le s_n<1$ with exactly $d$
numbers among $s_2,\dots,s_n$ being equal to $s_1$.
For $g$ in $L^1_{loc} (\rn)$ with $\mathcal{M}_{ L^q} (g)(0) = 1 $ we have
\bee\lab{1stcon}
\bigg( \dfrac{ \lvert g (y_1,\dots, y_n )\rvert\chi_{\rn\setminus [-1,1]^n} }{ \prod_{i=1}^n  (1+\lvert y_i\rvert)^{s_i}}\bigg)^*(t)
\le \f{C}{\om_{s_1,-s_1d}(t)}\,.
\eee

\end{Corollary}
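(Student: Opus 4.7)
Denote $h(y) = |g(y)| \chi_{\mathbb{R}^n \setminus [-1,1]^n}\prod_{i=1}^n (1+|y_i|)^{-s_i}$. The plan is simply to invert the distribution function bound from Lemma~\ref{lemmas1=s2} to read off an upper bound on the nonincreasing rearrangement $h^{*}(t)$, and then to compare the result with the function $1/\om_{s_1,-s_1 d}(t) \approx t^{-s_1}\log^{s_1 d}(e+t)$.

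Concretely, Lemma~\ref{lemmas1=s2} gives
\[
\lambda_n(\{h>a\}) \le C\,a^{-1/s_1}\log^{d}(e + 1/a), \qquad a>0.
\]
Since by definition $h^{*}(t)\le a$ whenever $\lambda_n(\{h>a\})\le t$, it suffices to exhibit an $a=a(t)\approx t^{-s_1}\log^{s_1 d}(e+t)$ for which the right-hand side above is at most $t$. The natural candidate is $a=A\,t^{-s_1}\log^{s_1 d}(e+t)$ for a sufficiently large constant $A$ depending on $s_1$, $d$, $C$.

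With this choice, $a^{-1/s_1}=A^{-1/s_1}\,t\,\log^{-d}(e+t)$ and $1/a = A^{-1}\,t^{s_1}\log^{-s_1 d}(e+t)$. A short separate check in the two regimes $t\ge 1$ and $0<t\le 1$ shows that $\log(e+1/a) \lesssim \log(e+t)$: for $t\le 1$ both quantities are comparable to $1$, and for $t\ge 1$ one has $1/a \lesssim t^{s_1}$ so that $\log(e+1/a)\lesssim s_1\log(e+t)$. Consequently
\[
C\,a^{-1/s_1}\log^{d}(e+1/a) \le C'\,A^{-1/s_1}\,t,
\]
and choosing $A$ so that $C'A^{-1/s_1}\le 1$ yields $\lambda_n(\{h>a\})\le t$. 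Therefore $h^{*}(t)\le a\approx t^{-s_1}\log^{s_1 d}(e+t)$, and since the definition of $\om_{s_1,-s_1 d}$ gives $1/\om_{s_1,-s_1 d}(t)\approx t^{-s_1}\log^{s_1 d}(e+t)$, the conclusion \eqref{1stcon} follows.

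The main (and only real) point is the matching of the power of the logarithm: raising the exponent from $d$ to $s_1 d$ via the substitution $t\leftrightarrow a$ relies on the fact that $\log(e+1/a)\asymp\log(e+t)$ under the intended relation $a\asymp t^{-s_1}\log^{s_1 d}(e+t)$; this is the only nonformal verification in the argument and is handled by the two-regime estimate described above.
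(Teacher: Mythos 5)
Your proposal is correct and follows essentially the same route as the paper: the paper's proof consists precisely of the observation that the inverse of $a\mapsto a^{-1/s_1}\log^{d}(e+1/a)$ from Lemma~\ref{lemmas1=s2} is equivalent to $t\mapsto t^{-s_1}\log^{s_1d}(e+t)=\big(\om_{s_1,-s_1d}(t)\big)^{-1}$, and your two-regime check of $\log(e+1/a)\lesssim\log(e+t)$ simply spells out that inversion in detail.
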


\begin{proof}
Note that the inverse function to
\[
(0, \infty) \ni a\mapsto a^{-\f1{s_1} } \log^{d}\Big(e+\f 1 a\Big)\,,
\]
that appears in Lemma~\ref{lemmas1=s2}, is equivalent to
\[
t \mapsto t^{- s_1} \log^{s_1d}(e+t ) =   \big(\om_{s_1, -s_1d}(t)\big)^{-1}\,.
\]
This proves \eqref{1stcon}.
\end{proof}

\begin{Prop}\label{weakorliczcube}
Assume that $h$ is supported in the cube $[-1,1]^n=Q_0$ and that $s_1>1/r>1/q>0$. Then
\begin{equation}\label{177}
\Norm{h}{M_{\om_{s_1,-s_1d}}} \lesssim \Norm{h}{L^{r,\infty}} \lesssim  \mathcal{M}_{ L^q} (h)(0)\,.
\end{equation}
\end{Prop}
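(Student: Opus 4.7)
The plan is to prove the two inequalities of~\eqref{177} separately, each by a short reduction. Both are essentially routine consequences of the support condition $\textup{supp}\,h\subset Q_0$ and the strict inequalities $s_1>1/r>1/q$; the role of the latter is to make up for the logarithm in $\omega_{s_1,-s_1d}$ and to let one exchange $L^q$-averages for $L^{r,\infty}$-norms.

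I will handle the second inequality $\Norm{h}{L^{r,\infty}} \lesssim \mathcal{M}_{L^q}(h)(0)$ first. Since the origin lies in $Q_0$ and $h$ is supported in $Q_0$, testing the strong maximal function on the rectangle $Q_0$ itself gives
\[
\mathcal{M}_{L^q}(h)(0) \geq \Big(\frac{1}{|Q_0|}\int_{Q_0}|h|^q\,dx\Big)^{1/q} = 2^{-n/q}\Norm{h}{L^q(\R^n)}.
\]
Since $r<q$ and $h$ is supported on a set of finite measure $2^n$, H\"older's inequality (or the trivial embedding $L^q(Q_0)\hookrightarrow L^r(Q_0)\hookrightarrow L^{r,\infty}(Q_0)$) yields $\Norm{h}{L^{r,\infty}} \lesssim \Norm{h}{L^q}$, which combined with the preceding display gives the claim.

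For the first inequality $\Norm{h}{M_{\om_{s_1,-s_1d}}} \lesssim \Norm{h}{L^{r,\infty}}$, I would invoke Proposition~\ref{phiomega}, which gives $\delta_{\om_{s_1,-s_1d}}=s_1<1$, so property (iii) from the background section applies and
\[
\Norm{h}{M_{\om_{s_1,-s_1d}}} \approx \sup_{t>0} \om_{s_1,-s_1d}(t)\, h^{*}(t).
\]
Because $h$ is supported in $Q_0$ and $|Q_0|=2^n$, we have $h^{*}(t)=0$ for $t>2^n$, so the supremum effectively runs over $t\in(0,2^n]$. On this range $\log(e+t)$ is bounded above and below by positive constants, hence $\om_{s_1,-s_1d}(t)\approx t^{s_1}$. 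Since $s_1>1/r$ and $t\le 2^n$, we have $t^{s_1}\lesssim t^{1/r}$, so
\[
\om_{s_1,-s_1d}(t)\,h^{*}(t) \lesssim t^{s_1}\,h^{*}(t) \lesssim t^{1/r}h^{*}(t) \leq \Norm{h}{L^{r,\infty}}.
\]
Taking the supremum over $t\in(0,2^n]$ finishes the proof.

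There is no real obstacle: the whole argument is bookkeeping once one notices (a) that the support restriction truncates $h^{*}$ above $|Q_0|$, letting one ignore the logarithmic factor in $\om_{s_1,-s_1d}$, and (b) that the strict inequalities $s_1>1/r>1/q$ provide the slack needed to pass from $t^{s_1}$ to $t^{1/r}$ and from $L^q$ to $L^{r,\infty}$ on a set of finite measure.
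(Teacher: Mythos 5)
Your proof is correct and follows essentially the same route as the paper: truncate $h^*$ at $|Q_0|=2^n$, use $\om_{s_1,-s_1d}(t)\lesssim t^{1/r}$ on that range (from $1/r<s_1$), and get the second inequality from averaging over $Q_0$ together with $L^q(Q_0)\hookrightarrow L^{r,\infty}(Q_0)$. The only cosmetic difference is that you invoke the equivalence $\Norm{h}{M_{\om_{s_1,-s_1d}}}\approx\sup_{t>0}\om_{s_1,-s_1d}(t)h^*(t)$ (property (iii), justified since $\delta_{\om_{s_1,-s_1d}}=s_1<1$), whereas the paper works directly with the $h^{**}$ definition using that $\om_{s_1,-s_1d}(t)/t$ is non-increasing; both are fine.
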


\begin{proof}
We first notice that the function $h$ is supported in a set of measure $2^n$, and therefore $h^*(t)=0$ if $t>2^n$.
Since the function $\om_{s_1,-s_1d}(t)/t$ is non-increasing, we have
\begin{align*}
\|h\|_{M_{\om_{s_1,-s_1d}}}
&=\sup_{t>0} \frac{\om_{s_1,-s_1d}(t)}{t} \int_0^t h^*(s)\,ds
=\sup_{t\in (0,2^n)} \frac{\om_{s_1,-s_1d}(t)}{t} \int_0^t h^*(s)\,ds\\
&\lesssim \sup_{t\in (0,2^n)} \frac{t^{\frac{1}{r}}}{t} \int_0^t h^*(s)\,ds
\lesssim \|h\|_{L^{r,\infty}}\,.
\end{align*}
Notice that the first inequality above makes use of the fact that
$\om_{s_1,-s_1d}(t) \lesssim t^{1/r}$ for $t\in (0,2^n)$ as $1/r<s_1$. This proves the first inequality
in \eqref{177}. The second inequality in \eqref{177} follows from the natural embedding of $L^q([-1,1]^n)$
in $L^{r,\infty} ([-1,1]^n) $, as $r<q$.
\end{proof}

Combining the results of Corollary~\ref{cor87} and Proposition~\ref{weakorliczcube} we obtain the following.

\begin{Corollary}\label{cor88}
Let $0<1/q<s_1 \le s_2 \le \cdots \le s_n<1$ with exactly $d$
numbers among $s_2,\dots,s_n$ being equal to $s_1$.
For $g$ in $L^1_{loc} (\rn)$ with $\mathcal{M}_{ L^q} (g)(0) = 1 $ we have
\bee\lab{1stcon11}
\bigg( \dfrac{ g (y_1,\dots , y_n ) }{ \prod_{i=1}^n  (1+\lvert y_i\rvert)^{s_i}}\bigg)^*(t)
\le \f{C}{     \om_{s_1,-s_1d}(t)}\,.
\eee
Consequently, for any $g\in L^1_{loc}(\rn) $ and any $x=( x_1,\dots , x_n) \in \rn$, we have
\bee\lab{2ndcon}
\bigg\| \dfrac{g (x_1+2^{-j_1} y_1,
\dots, x_n+2^{-j_n} y_n)}{ \prod_{i=1}^n  (1+\lvert y_i\rvert)^{s_i}}\bigg\|_{M_{\om _{s_1,-s_1d}} (dy_1\cdots dy_n)}
\le C \mathcal M_{L^q}(g)(x),
\eee
for any $j_1,\dots, j_n\in \mathbb Z$.
\end{Corollary}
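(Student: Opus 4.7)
The plan is to deduce \eqref{1stcon11} by decomposing $g$ according to whether $y$ lies inside or outside the cube $Q_0:=[-1,1]^n$, and then gluing the two resulting rearrangement estimates. On $\rn\setminus Q_0$, Corollary~\ref{cor87} immediately supplies
\[
\bigg(\dfrac{|g(y)|\chi_{\rn\setminus Q_0}(y)}{\prod_{i=1}^n (1+|y_i|)^{s_i}}\bigg)^{*}(t)\le \f{C}{\om_{s_1,-s_1d}(t)}.
\]
On $Q_0$, the weight satisfies $1\le \prod_{i=1}^n (1+|y_i|)^{s_i}\le 2^{s_1+\cdots+s_n}$, so it suffices to control $(g\chi_{Q_0})^{*}$. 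Proposition~\ref{weakorliczcube} yields $\|g\chi_{Q_0}\|_{M_{\om_{s_1,-s_1d}}}\lesssim \mathcal{M}_{L^q}(g)(0)=1$, and since $\om_{s_1,-s_1d}\in\mathcal{Q}$ with $\delta_{\om_{s_1,-s_1d}}=s_1<1$, property (iii) of Section~2 converts this into the pointwise rearrangement bound $(g\chi_{Q_0})^{*}(t)\lesssim 1/\om_{s_1,-s_1d}(t)$.

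To combine the two pieces I would use subadditivity of the rearrangement, $(F_1+F_2)^{*}(t)\le F_1^{*}(t/2)+F_2^{*}(t/2)$, together with $\om_{s_1,-s_1d}(t/2)\approx \om_{s_1,-s_1d}(t)$, which follows from $\om_{s_1,-s_1d}\in\mathcal{B}$ having finite indices (Proposition~\ref{phiomega}). This delivers \eqref{1stcon11}.

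For \eqref{2ndcon} I would reduce to \eqref{1stcon11} via a translation and anisotropic dilation. Assuming $\mathcal{M}_{L^q}(g)(x)<\infty$ (otherwise the inequality is vacuous), set
\[
\wt g(y):=\f{g(x_1+2^{-j_1}y_1,\dots,x_n+2^{-j_n}y_n)}{\mathcal{M}_{L^q}(g)(x)}.
\]
A change of variables $z_i=x_i+2^{-j_i}y_i$ identifies axis-parallel rectangles $R\ni 0$ in $y$-space bijectively with axis-parallel rectangles $R'\ni x$ in $z$-space, with matching $L^q$ averages after the normalisation; hence $\mathcal{M}_{L^q}(\wt g)(0)=1$. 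Applying \eqref{1stcon11} to $\wt g$ and reinterpreting the pointwise bound through the same equivalence $\|\cdot\|_{M_{\om_{s_1,-s_1d}}}\approx \sup_{t>0}\om_{s_1,-s_1d}(t)(\cdot)^{*}(t)$ produces \eqref{2ndcon}.

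I do not anticipate a substantive obstacle here: all the analytic content is already contained in Corollary~\ref{cor87} and Proposition~\ref{weakorliczcube}. The only care needed is to verify that the quasi-concave function $\om_{s_1,-s_1d}$ has upper index strictly less than $1$ so that the Marcinkiewicz norm admits the pointwise-rearrangement description invoked above, and that the anisotropic change of variables respects the strong maximal function in the $L^q$ sense—both points are straightforward.
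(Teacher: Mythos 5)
Your proposal is correct and follows essentially the same route as the paper: split $g$ into $g\chi_{[-1,1]^n}$ and $g\chi_{\rn\setminus[-1,1]^n}$, apply Proposition~\ref{weakorliczcube} and Corollary~\ref{cor87} respectively, then obtain \eqref{2ndcon} by normalizing and using translation/dilation invariance of the strong maximal function. The only difference is that you spell out the gluing (subadditivity of the rearrangement plus the doubling of $\om_{s_1,-s_1d}$) and the passage between the $M_{\om_{s_1,-s_1d}}$ norm and the pointwise $f^*$ bound, details the paper leaves implicit.
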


\begin{proof}
To  prove \eqref{1stcon11} we split $g= g_0+g_1$, where $g_0=g\chi_{[-1,1]^n} $ and
$g_1= g\chi_{\rn\setminus [-1,1]^n}$ and we apply Corollary~\ref{cor87} to $g_1$ and
 Proposition~\ref{weakorliczcube} to $g_0$. Now \eqref{1stcon11} applied to $g/ \mathcal{M}_{ L^q} (g)(0)$
 yields   \eqref{2ndcon}   when $x=0$ and $j_1=\cdots = j_n=0$.  The general
case of  \eqref{2ndcon} can be obtained by a translation and a dilation.
\end{proof}

\section{A limiting case Sobolev embedding}\label{S:embedding}

The following embedding of a Lorentz-Sobolev space into the space of essentially bounded functions is an
important ingredient for the proof of Theorem~\ref{1/2MainResult}.

\begin{Prop}\label{P:embedding}
Let $0<s_1 \leq s_2\leq \dots \leq s_n <1$, where exactly $d$ of the numbers $s_2,\dots,s_n$ are equal to $s_1$. Then
\[
\|f\|_{L^\infty(\mathbb R^n)} \lesssim \|\Gamma(s_1,\dots,s_n)f\|_{\Lambda_{\phi_{s_1,(1-s_1)d}}(\mathbb R^n)}.
\]
\end{Prop}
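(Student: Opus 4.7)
The plan is to represent $f$ as a convolution with the tensor-product Bessel potential kernel. Let $G_s$ denote the one-dimensional Bessel kernel determined on the Fourier side by $\widehat{G_s}(\xi)=(1+4\pi^2\xi^2)^{-s/2}$, and set $K(x):=\prod_{i=1}^n G_{s_i}(x_i)$, so that $f=K*g$ with $g:=\Gamma(s_1,\dots,s_n)f$. For any $x\in\mathbb R^n$, applying the H\"older-type inequality~\eqref{E:holder} with $\varphi=\phi_{s_1,(1-s_1)d}$ yields
\[
|f(x)|\le \|K(x-\cdot)\|_{M_{(\phi_{s_1,(1-s_1)d})_*}}\,\|g\|_{\Lambda_{\phi_{s_1,(1-s_1)d}}}=\|K\|_{M_{(\phi_{s_1,(1-s_1)d})_*}}\,\|g\|_{\Lambda_{\phi_{s_1,(1-s_1)d}}},
\]
by rearrangement invariance of the Marcinkiewicz norm. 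The proposition thus reduces to proving $K\in M_{(\phi_{s_1,(1-s_1)d})_*}(\mathbb R^n)$.

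By Proposition~\ref{phiomega}, $\gamma_{\phi_{s_1,(1-s_1)d}}=s_1>0$, hence $\delta_{(\phi_{s_1,(1-s_1)d})_*}=1-s_1<1$, and property~(iii) from the Background section permits computing $\|K\|_{M_{(\phi_{s_1,(1-s_1)d})_*}}$ with $K^*$ in place of $K^{**}$. Since $(\phi_{s_1,(1-s_1)d})_*(t)\approx t^{1-s_1}\log^{-(1-s_1)d}(e+1/t)$, the remaining task is the rearrangement bound
\[
K^*(t)\lesssim t^{s_1-1}\log^{(1-s_1)d}\!\bigl(e+\tfrac{1}{t}\bigr),\qquad t>0.
\]

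To verify this, split $K=K_0+K_1$ with $K_0:=K\chi_{[-1,1]^n}$. The tail $K_1$ decays exponentially in every coordinate, so $K_1\in L^1\cap L^\infty$ and its rearrangement is trivially dominated by the right-hand side. On $[-1,1]^n$ the classical estimate $|G_s(u)|\lesssim |u|^{s-1}$ gives $|K_0(x)|\lesssim\prod_i|x_i|^{s_i-1}$, and the substitution $u_i=1/|x_i|$ (with $|x_i|\in(0,1]$, $u_i\in[1,\infty)$, Jacobian $u_i^{-2}$) transforms the distribution into
\[
|\{K_0>\lambda\}|\lesssim 2^n\idotsint\limits_{\substack{u_1,\dots,u_n\ge 1\\ u_1^{1-s_1}\cdots u_n^{1-s_n}>\lambda}} u_1^{-2}\cdots u_n^{-2}\,du_1\cdots du_n.
\]
This is precisely the integral treated by Lemma~\ref{below}, taken with $r_i=1-s_i$ and $\alpha_i=2$; the ordering $s_1\le\cdots\le s_n$ is equivalent to $(\alpha_1-1)/r_1\le\cdots\le(\alpha_n-1)/r_n$, and among the ratios $1/(1-s_i)$ there are exactly $d+1$ equal to the minimum $1/(1-s_1)$. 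Lemma~\ref{below} therefore produces the asymptotic $\lambda^{-1/(1-s_1)}\log^d(e+\lambda)$. Solving $t\approx\lambda^{-1/(1-s_1)}\log^d(e+\lambda)$ for $\lambda$ to leading order yields $\lambda\approx t^{s_1-1}\log^{(1-s_1)d}(e+1/t)$, which is the desired bound for small $t$; for $t\ge 1$ it is immediate from $K\in L^1(\mathbb R^n)$. The degenerate case $n=1$ (forcing $d=0$) is even simpler since $K=G_{s_1}$ satisfies $K^*(t)\approx t^{s_1-1}$ directly.

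The main technical obstacle is to propagate the degeneracy parameter $d$ sharply through two successive inversions: the $(d+1)$-fold coincidence of the maximal weights $1-s_i$ must yield a power $L^d$ (not $L^{d+1}$) in the distribution function, which is exactly what Lemma~\ref{below} provides, and this $L^d$ must then transform into a factor $\log^{(1-s_1)d}(1/t)$ in the rearrangement, with the exponent $(1-s_1)$ arising from the $-1/(1-s_1)$ appearing on $\lambda$. Once the bookkeeping is done carefully, the proposition follows from the two-step reduction outlined above.
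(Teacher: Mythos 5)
Your overall strategy (write $f=(G_{s_1}\otimes\cdots\otimes G_{s_n})*g$, apply the H\"older inequality~\eqref{E:holder}, and reduce to a rearrangement bound for the tensor-product Bessel kernel) is exactly the paper's, and your treatment of the piece supported in the unit cube is correct: the substitution $u_i=1/|x_i|$ together with Lemma~\ref{below} (with $\alpha_i=2$, $r_i=1-s_i$) gives the distribution bound $\lambda^{-1/(1-s_1)}\log^d(e+\lambda)$, and inverting this yields $K^*(t)\lesssim t^{s_1-1}\log^{(1-s_1)d}(e+1/t)$, which is what the Marcinkiewicz norm requires.

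However, there is a genuine gap in your dismissal of the piece outside the cube. For $n\ge 2$ the set $\mathbb R^n\setminus[-1,1]^n$ only forces \emph{some} coordinate to have $|x_j|>1$; the remaining coordinates can be arbitrarily close to $0$, where the corresponding factors $G_{s_i}(x_i)\approx |x_i|^{s_i-1}$ blow up. Hence $K_1=K\chi_{\mathbb R^n\setminus[-1,1]^n}$ is \emph{not} in $L^\infty$, and its rearrangement is not "trivially dominated": these mixed regions (singular in the coordinates inside $(-1,1)$, exponentially decaying in the others) contribute to the distribution function at the same order $\lambda^{-1/(1-s_1)}\log^d(e+\lambda)$ as the main term, in particular whenever the small coordinates include the index $1$ and those tied with $s_1$. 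Controlling them is precisely the bulk of the paper's proof: for each partition of the coordinates one fixes the large variables, sets $a=\lambda e^{c(x_{j_1}+\cdots+x_{j_{n-k}})}$, applies Lemma~\ref{below} to the singular variables to get $a^{-1/(1-s_{i_1})}\log^{d'}(e+a)$, and then integrates the resulting exponential decay against the polynomial-in-$(x_{j_1}+\cdots+x_{j_{n-k}})$ logarithmic correction, with a separate argument in the regime $\lambda\le 1$ (where the minimum in the paper's estimate~\eqref{E:minimum} must be analyzed and a slightly smaller decay constant $c''<c'$ is chosen). Your proof needs this mixed-case analysis (or an equivalent global bound such as $G_s(u)\lesssim |u|^{s-1}e^{-c'|u|}$ followed by the same kind of partitioned computation); as written, the step "$K_1\in L^1\cap L^\infty$" is false and the argument does not close.
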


\begin{proof}
For a given function $f$, we denote
\[
g=\Gamma(s_1,\dots,s_n) f\,.
\]
We write $f=\Gamma(-s_1,\dots,-s_n) g = (G_{s_1} \otimes \dots \otimes G_{s_n})*g$, where $G_s$ is the
one-dimensional kernel of $(I-\partial^2)^{-s/2}$. We recall the estimates
\[
G_s(x)\lesssim |x|^{s-1} \quad \text{as } x\to 0
\]
\[
G_s(x)\lesssim e^{-c|x|} \quad \text{as } |x|\to \infty\,.
\]
The H\"older inequality~\eqref{E:holder} yields
\begin{align*}
\|f\|_{L^\infty(\R^n)}
&=\|(G_{s_1} \otimes \dots \otimes G_{s_n}) * g\|_{L^\infty(\R^n)}\\
&\leq \sup_{(\tilde{x}_1,\dots,\tilde{x}_n)\in \R^n} \int_{\R^n} G_{s_1}(x_1) \cdots G_{s_n}(x_n) |g(\tilde{x}_1-x_1,\dots,\tilde{x}_n-x_n)|\,dx_1\cdots dx_n\\
&\leq \|G_{s_1} \otimes \cdots \otimes G_{s_n}\|_{M_{\phi_{1-s_1,d (s_1-1)}}(\R^n)} \|g\|_{\Lambda_{\phi_{s_1,(1-s_1)d}}(\R^n)}\,,
\end{align*}
as for $t>0$ we have
\[
\f{t}{\phi_{s_1,(1-s_1)d}(t)}=t^{1-s_1}\log^{d(s_1-1)}(e+\tfrac 1t) = \phi_{1-s_1, d(s_1-1)}(t) .
\]
It remains to verify that
\begin{equation}\label{E:G}
G_{s_1} \otimes \dots \otimes G_{s_n} \in M_{\phi_{1-s_1, d(s_1-1)}}(\R^n)\,.
\end{equation}
Given a subset $I$ of $\{1,2,\dots,n\}$, we set $J=\{1,2,\dots,n\}\setminus I$ and write $I=(i_1,\dots,i_k)$
and $J=(j_1,\dots,j_{n-k})$ (there is a slight abuse of notation as one of the sets may be empty). Now observe that
\begin{align*}
&|\{(x_1,\dots,x_n)\in \R^n: ~ G_{s_1}(x_1)\dots G_{s_n}(x_n) >\lambda\}|\\
&\leq \sum_{I\subseteq \{1,2,\dots,n\}} |\{(x_{i_1},\dots,x_{i_k})\in (-1,1)^k, ~(x_{j_1},\dots,x_{j_{n-k}})\in (\R\setminus (-1,1))^{n-k}:\\
&\hspace{2in} |x_{i_1}|^{s_{i_1}-1} \cdots |x_{i_k}|^{s_{i_k}-1} e^{-c(|x_{j_1}|+\dots+|x_{j_{n-k}}|)}>\lambda\}|\\
&\lesssim \sum_{I\subseteq \{1,2,\dots,n\}} |\{(x_{i_1},\dots,x_{i_k})\in (0,1)^k, ~(x_{j_1},\dots,x_{j_{n-k}})\in (1,\infty)^{n-k}:\\
&\hspace{2in} x_{i_1}^{s_{i_1}-1} \cdots x_{i_k}^{s_{i_k}-1} e^{-c(x_{j_1}+\dots+x_{j_{n-k}})}>\lambda\}|\,.
\end{align*}

We denote
\begin{align*}
&S_{I,\lambda}=
\{(x_{i_1},\dots,x_{i_k})\in (0,1)^k, ~(x_{j_1},\dots,x_{j_{n-k}})\in (1,\infty)^{n-k}:\\
&\hspace{2in} x_{i_1}^{s_{i_1}-1} \cdots x_{i_k}^{s_{i_k}-1} e^{-c(x_{j_1}+\dots+x_{j_{n-k}})}>\lambda\}.
\end{align*}
We want to estimate $|S_{I,\lambda}|$. To this end, we fix $I\subseteq \{1,2,\dots,n\}$ and
$(x_{j_1},\dots,x_{j_{n-k}})\in (1,\infty)^{n-k}$. Further, for a fixed $\lambda>0$ we set
$a=\lambda e^{c(x_{j_1}+\dots+x_{j_{n-k}})}$. If $a>1$ then we estimate
\begin{align}
\label{E:a}
&|\{(x_{i_1},\dots,x_{i_k})\in (0,1)^k:~x_{i_1}^{s_{i_1}-1} \cdots x_{i_k}^{s_{i_k}-1} >a\}|\\
\nonumber
&=\idotsint\limits_{ \substack{x_{i_1} ,  \dots , x_{i_k}\in (0,1) \\
x_{i_1}^{s_{i_1}-1}\cdots x_{i_k}^{s_{i_k}-1}>a}} dx_{i_1}\cdots dx_{i_k}
=\idotsint\limits_{ \substack{u_{1} ,  \dots , u_{k}>1 \\  u_{1}^{1-s_{i_1}}\cdots u_{k}^{1-s_{i_k}}>a}}
u_1^{-2}\dots u_k^{-2}\,du_{1}\cdots du_k\\
\nonumber
&\lesssim a^{-\frac{1}{1-s_{i_1}}}\log^{d'}(e+a)\,,
\end{align}
where $d'$ is the number of elements from the set $\{s_{i_2},\dots,s_{i_k}\}$ that are equal to $s_{i_1}$.
We recall that the last inequality follows from Lemma~\ref{below}. Notice that the estimate~\eqref{E:a}
is true also if $a\leq 1$ as the measure of the set on the left-hand side is at most $1$, which is trivially
bounded by the right-hand side. We also observe that
\begin{align*}
a^{-\frac{1}{1-s_{i_1}}}\log^{d'}(e+a)
&\lesssim \lambda^{-\frac{1}{1-s_{i_1}}}
\log^{d'}(e+\lambda) e^{-\frac{c}{1-s_{i_1}}(x_{j_1}+\dots+x_{j_{n-k}})} (x_{j_1}+\dots+x_{j_{n-k}})^{d'}\\
&\lesssim \lambda^{-\frac{1}{1-s_{i_1}}}\log^{d'}(e+\lambda) e^{-c'(x_{j_1}+\dots+x_{j_{n-k}})}\,,
\end{align*}
where $c'<\frac{c}{1-s_{i_1}}$.

Thus, if $\lambda>1$ then we have
\begin{align*}
|S_{I,\lambda}| &\lesssim \idotsint\limits_{x_{j_1},\dots, x_{j_{n-k}}>1} \lambda^{-\frac{1}{1-s_{i_1}}} \log^{d'}(e+\lambda) e^{-c'(x_{j_1}+\dots+x_{j_{n-k}})}\,dx_{j_1}\dots dx_{j_{n-k}}\\
&\lesssim \lambda^{-\frac{1}{1-s_{i_1}}} \log^{d'}(e+\lambda)
\lesssim \lambda^{-\frac{1}{1-s_{1}}} \log^{d}(e+\lambda)\,.
\end{align*}
On the other hand, if $\lambda\leq 1$ then
\begin{align}\label{E:minimum}
&|\{(x_{i_1},\dots,x_{i_k})\in (0,1)^k: ~x_{i_1}^{s_{i_1}-1} \dots x_{i_k}^{s_{i_k}-1} e^{-c(x_{j_1}+\dots+x_{j_{n-k}})}>\lambda\}|\\
\nonumber
&\lesssim \min\{1,\lambda^{-\frac{1}{1-s_{i_1}}}\log^{d'}(e+\lambda) e^{-c'(x_{j_1}+\dots+x_{j_{n-k}})}\}\,.
\end{align}
If the minimum is equal to $1$ then $e^{c'(x_{j_1}+\dots+x_{j_{n-k}})}\leq \lambda^{-\frac{1}{1-s_{i_1}}} \log^{d'}(e+\lambda)$,
and so $x_{j_1}+\dots+x_{j_{n-k}} \lesssim \log(e\lambda^{-1})$. Then the measure of the corresponding part
of the set $S_{I,\lambda}$ is bounded by constant times
\begin{equation*}
\log^{n-k}(e\lambda^{-1}) \lesssim \lambda^{-\frac{1}{1-s_1}}\log^{d}(e+\lambda)\,.
\end{equation*}
Finally, if the minimum in~\eqref{E:minimum} is equal to
$\lambda^{-\frac{1}{1-s_{i_1}}} \log^{d'}(e+\lambda) e^{-c'(x_{j_1}+\dots+x_{j_{n-k}})}$, then
\[
x_{j_1}+\dots+x_{j_{n-k}} \geq \frac{1}{c'} \log(\lambda^{-\frac{1}{1-s_{i_1}}} \log^{d'}(e+\lambda),
\]
and the measure of the corresponding part of the set $S_{I,\lambda}$ is bounded by constant times
\begin{align*}
&\idotsint\limits_{\substack{x_{j_1},\dots,x_{j_{n-k}}>1\\x_{j_1}+\dots+x_{j_{n-k}} \geq \frac{1}{c'} \log(\lambda^{-\frac{1}{1-s_{i_1}}} \log^{d'}(e+\lambda))}} \!\!\!\!\!\!\!\!\!\!\!\!
\lambda^{-\frac{1}{1-s_{i_1}}} \log^{d'}(e+\lambda) e^{-c'(x_{j_1}+\dots+x_{j_{n-k}})}\,dx_{j_1}\dots dx_{j_{n-k}}\\
&\lesssim \int_{\frac{1}{c'} \log(\lambda^{-\frac{1}{1-s_{i_1}}}\log^{d'}(e+\lambda))}^\infty \lambda^{-\frac{1}{1-s_{i_1}}}
\log^{d'}(e+\lambda) e^{-c{'}r} r^{n-k-1}\,dr\\
&\lesssim \lambda^{-\frac{1}{1-s_{i_1}}}\log^{d'}(e+\lambda) \int_{\frac{1}{c'} \log(\lambda^{-\frac{1}{1-s_{i_1}}} \log^{d'}(e+\lambda))}^\infty e^{-c{''}r} \,dr\\
&\lesssim \lambda^{-(1-\frac{c{''}}{c'})\frac{1}{1-s_{i_1}}} \log^{(1-\frac{c{''}}{c'})d'}(e+\lambda)
\lesssim \lambda^{-\frac{1}{1-s_1}}\log^{d}(e+\lambda)\,.
\end{align*}
The last inequality holds since $c{''}$ can be chosen to be any number less than $c'$, and thus $1-c{''}/c'$ can be arbitrarily small.

Altogether, we proved
\[
|\{(x_1,\dots,x_n)\in \R^n:~G_{s_1}(x_1) \cdots G_{s_n}(x_n)>\lambda\}| \lesssim \lambda^{-\frac{1}{1-s_1}} \log^{d}(e+\lambda)\,.
\]
This yields
\[
(G_{s_1} \otimes \dots \otimes G_{s_n})^{*}(t) \lesssim t^{s_1-1}(\log(e+1/t))^{d(1-s_1)}, \quad t>0\,,
\]
which in turn implies~\eqref{E:G}.
\end{proof}

Next we show that the previous result is sharp, in the sense that the space $\Lambda_{\phi_{s_1,(1-s_1)d}}$ is locally the largest 
rearrangement-invariant space for which Proposition~\ref{P:embedding} holds.

\begin{Prop}\label{P:optimality}
Let $0<s_1 \leq s_2\leq \dots \leq s_n <1$, where exactly $d$ of the numbers $s_2,\dots,s_n$ are equal to $s_1$. Assume that $E$ 
is a~rearrangement-invariant space such that
\begin{equation}\label{E}
\|f\|_{L^\infty(\mathbb R^n)} \lesssim \|\Gamma(s_1,\dots,s_n)f\|_{E(\mathbb R^n)}.
\end{equation}
Then $E(\Omega) \hookrightarrow \Lambda_{\phi_{s_1,(1-s_1)d}}(\Omega)$ for all sets $\Omega \subseteq \R^n$ of finite measure.
\end{Prop}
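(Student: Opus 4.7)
The plan is to establish optimality via K\"othe duality, by showing that the kernel $G := G_{s_1} \otimes \cdots \otimes G_{s_n}$ of the inverse operator $\Gamma(s_1,\dots,s_n)^{-1}$ lies in $E'(\R^n)$ and that its non-increasing rearrangement matches the reciprocal of the fundamental function of the candidate Marcinkiewicz space. By the K\"othe duality recalled in Section~2, $\Lambda_{\phi_{s_1,(1-s_1)d}}(\Omega)' = M_{\phi_*}(\Omega)$ with
\[
\phi_*(t) = \frac{t}{\phi_{s_1,(1-s_1)d}(t)} \approx \phi_{1-s_1, d(s_1-1)}(t),
\]
so the claimed embedding $E(\Omega) \hookrightarrow \Lambda_{\phi_{s_1,(1-s_1)d}}(\Omega)$ on finite-measure sets is equivalent to $M_{\phi_{1-s_1,d(s_1-1)}}(\Omega) \hookrightarrow E'(\Omega)$.

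To show $G \in E'(\R^n)$, write $f = G * g$ with $g := \Gamma(s_1,\dots,s_n) f \in E$; the hypothesis~\eqref{E} becomes $\|G * g\|_{L^\infty(\R^n)} \lesssim \|g\|_{E(\R^n)}$. Evaluating at the origin and using that each $G_{s_i}$ is nonnegative and even,
\[
\int_{\R^n} G(y)\,|g(y)|\,dy \leq \|G * |g|\|_{L^\infty(\R^n)} \lesssim \|g\|_{E(\R^n)},
\]
and the definition of the associate norm gives $\|G\|_{E'(\R^n)} \lesssim 1$. Next I will need the matching two-sided rearrangement estimate
\[
G^*(t) \approx t^{s_1-1}\log^{d(1-s_1)}(e + 1/t) \approx \phi_{1-s_1, d(s_1-1)}(t)^{-1}, \qquad t \in (0, t_0),
\]
for some $t_0 > 0$. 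The upper bound is already contained in the proof of Proposition~\ref{P:embedding}; for the lower bound I discard all contributions except the principal one, where every coordinate lies in $(0,1)$, use the sharp asymptotics $G_{s_i}(x_i) \gtrsim |x_i|^{s_i-1}$ near $0$, substitute $u_i = 1/x_i$, and apply Lemma~\ref{below} with $\alpha_i = 2$ and $r_i = 1 - s_i$. Since $s_1 \leq s_2 \leq \dots \leq s_n$, the ratio $(\alpha_1-1)/r_1 = 1/(1-s_1)$ is the smallest of the ratios $(\alpha_i-1)/r_i$ with precisely $d$ further equalities, so Lemma~\ref{below} produces exactly $d$ logarithmic factors.

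Given these two ingredients, the dual embedding follows. For $h \in M_{\phi_{1-s_1,d(s_1-1)}}(\Omega)$ with $|\Omega| < \infty$, property (iii) of Section~2 gives $h^*(t) \lesssim \|h\|_M\,\phi_{1-s_1,d(s_1-1)}(t)^{-1}$. The lower bound above yields $h^*(t) \lesssim \|h\|_M\,G^*(t)$ on $(0, \min(t_0, |\Omega|))$, while on $[\min(t_0,|\Omega|), |\Omega|)$ the right-hand side is bounded by a constant multiple of $\|h\|_M$. Thus $(h\chi_\Omega)^* \lesssim \|h\|_M (G^* + \chi_{(0, |\Omega|)})$, and since $G \in E'(\R^n)$ and $\chi_{(0,|\Omega|)}$ has finite $E'$-norm (the fundamental function value $\varphi_{E'}(|\Omega|) < \infty$), I obtain $\|h\|_{E'(\Omega)} \lesssim C(|\Omega|)\,\|h\|_{M_{\phi_{1-s_1,d(s_1-1)}}(\Omega)}$, which finishes the proof. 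The main obstacle is the sharp lower bound on $G^*$: one must carefully retrace the tile-by-tile distribution-function analysis of Proposition~\ref{P:embedding}, preserving lower bounds throughout and using the combinatorial counting in Lemma~\ref{below} to match \emph{exactly} $d$ logarithmic factors — it is precisely at this step that the concave function $\phi_{s_1,(1-s_1)d}$ is singled out.
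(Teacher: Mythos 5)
Your proposal is correct and follows essentially the same route as the paper: reduce by K\"othe duality to $M_{\phi_{1-s_1,(s_1-1)d}}(\Omega)\hookrightarrow E'(\Omega)$, deduce $G_{s_1}\otimes\cdots\otimes G_{s_n}\in E'(\R^n)$ from the hypothesis by testing the convolution pointwise, obtain the lower bound on its rearrangement by restricting to $(0,1)^n$ and invoking Lemma~\ref{below} with $\alpha_i=2$, $r_i=1-s_i$, and then dominate $h^*$ by that rearrangement plus a bounded tail on $(0,|\Omega|)$. The only cosmetic differences are that you evaluate at the origin with $|g|$ instead of taking the supremum over translates, and you quote finiteness of the fundamental function of $E'$ where the paper justifies it by splitting $(t_0,|\Omega|)$ into intervals of length $t_0$ dominated by the function already shown to lie in $E'$.
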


\begin{proof}
To prove this claim, we set $g=\Gamma(s_1,\dots,s_n)f$ and rewrite inequality~\eqref{E} as
\begin{equation}\label{E:rewritten}
\|(G_{s_1} \otimes \cdots \otimes G_{s_n})* g\|_{L^\infty(\R^n)} \lesssim \|g\|_{E(\R^n)}\,,
\end{equation}
where $G_{s}$ is the one-dimensional kernel of $(I-\partial^2)^{-s/2}$. For a~given
$(\tilde{x}_1,\dots,\tilde{x}_n)\in \R^n$, we have
\begin{align}\label{E:duality}
&\sup_{\|g\|_{E(\R^n)}\leq 1} |(G_{s_1} \otimes \cdots \otimes G_{s_n})* g (\tilde{x}_1,\dots,\tilde{x}_n)|\\
\nonumber
&=\sup_{\|g\|_{E(\R^n)}\leq 1} \left|\int_{\R^n} G_{s_1}(x_1)\cdots G_{s_n}(x_n) g(\tilde{x}_1-x_1\,,
\dots,\tilde{x}_n-x_n)\,dx_1 \dots dx_n\right|\\
\nonumber
&=\|G_{s_1}\otimes \cdots \otimes G_{s_n}\|_{E'(\R^n)}\,,
\end{align}
where $E'$ is the K\"othe dual space of $E$.
Thus, \eqref{E:rewritten} implies that the function $G_{s_1}\otimes \cdots \otimes G_{s_n}$ belongs to $E'(\R^n)$.
We next find a lower bound for the distribution function of $G_{s_1}\otimes \dots \otimes G_{s_n}$. Since $G_{s_i}(x_i)
\approx x_i^{s_i-1}$ if $0<x_i<1$ and $i \in \{1,2,\dots,n\}$, we obtain for $\lambda >1$,
\begin{align*}
&|\{(x_1,\dots,x_n) \in \R^n: ~G_{s_1}(x_1)\cdots G_{s_n}(x_n) >\lambda\}|\\
&\gtrsim |\{(x_1,\dots,x_n) \in (0,1)^n: x_1^{s_1-1}\cdots x_n^{s_n-1}>\lambda\}|\\
&\approx \lambda^{-\frac{1}{1-s_1}} \log^d(e+\lambda)\,,
\end{align*}
where $d$ is the number of elements from the set $\{s_2,\dots,s_n\}$ that are equal to $s_1$. Note that the last equivalence 
follows by the calculation in~\eqref{E:a} and by Lemma~\ref{below}. This shows that
\[
(G_{s_1}\otimes \cdots \otimes G_{s_n})^*(t) \gtrsim t^{s_1-1} \log^{(1-s_1)d}\Big(e+\frac{1}{t}\Big), \quad t\in (0,t_0)
\]
for some $t_0>0$.  This shows that if
\[
g(t):= t^{s_1-1} \log^{(1-s_1)d}\Big(e+\frac{1}{t}\Big), \quad\, t>0\, , 
\]
then the function $g\chi_{(0, t_0)} \in E':= E'(0, \infty)$. 
To reach the conclusion we observe that the embedding $E(\Omega) \hookrightarrow \Lambda_{\phi_{s_1,(1-s_1)d}}(\Omega)$ is, 
by duality, equivalent to $M_{\phi_{1-s_1,(s_1-1)d}}(\Omega) \hookrightarrow E'(\Omega)$. Now, if $f$ is a~function satisfying 
$\|f\|_{M_{\phi_{1-s_1,(s_1-1)d}}(\Omega)} \leq 1$, then
\[
f^*(t) \leq f^{**}(t) \lesssim g(t), \quad\, t\in (0,|\Omega|)\,.
\]
Hence 
\begin{align*}
\|f\|_{E'(\Omega)} & = \|f^*\chi_{(0,|\Omega|)}\|_{E'} \lesssim \|g\chi_{(0,|\Omega|)}\|_{E'}\\
&\lesssim \|g\chi_{(0,t_0)}\|_{E'} + \|\chi_{(t_0,|\Omega|)}\|_{E'} \\
& \lesssim \|g\chi_{(0,t_0)}\|_{E'} \leq C\,.
\end{align*}

In this chain of inequalities we used the monotonicity and rearrangement-invariance of the norm in the space $E'$, the fact that 
the interval $(t_0,|\Omega|)$ can be split into a finite number of intervals of length at most $t_0$ and that the 
constant function on the interval $(0,t_0)$ is bounded from above by a multiple of the function 
$t^{s_1-1} \log^{d(1-s_1)}(e+\frac{1}{t})$. This completes the proof.
\end{proof}

\begin{Corollary}\label{C:cor}
Let $0<s_1 \leq s_2\leq \dots \leq s_n <1$, where exactly $d$ of the numbers $s_2,\dots,s_n$ are equal to $s_1$. 
Assume that $E$ is a~rearrangement-invariant space such that $0<\alpha_E\leq \beta_E <1$ and
\begin{equation}\label{E:E}
\|T_\sigma f\|_{L^p(\R^n)} \lesssim \sup_{j_1,\dots , j_n\in \mathbb{Z}} \Norm{\Gamma \left(s_1,\dots , s_n\right)\big[\wh{\Psi}\,
D_{j_1,\dots , j_n} \sigma \big]}{E(\mathbb{R}^n)} \|f\|_{L^p(\R^n)}.
\end{equation}
Then $E(\Omega) \hookrightarrow \Lambda_{\phi_{s_1,(1-s_1)d}}(\Omega)$ for all sets $\Omega \subseteq \R^n$ of finite Lebesgue 
measure.
\end{Corollary}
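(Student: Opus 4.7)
The plan is to reduce the corollary to Proposition~\ref{P:optimality} by deriving from the multiplier hypothesis \eqref{E:E} the Sobolev-type embedding
\begin{equation*}
\|f\|_{L^\infty(\mathbb R^n)} \lesssim \|\Gamma(s_1,\dots,s_n) f\|_{E(\mathbb R^n)}
\end{equation*}
for a dense enough class of test functions $f$. Once this is in hand, the duality argument used in the proof of Proposition~\ref{P:optimality} delivers the desired local embedding $E(\Omega) \hookrightarrow \Lambda_{\phi_{s_1,(1-s_1)d}}(\Omega)$ for every $\Omega \subset \mathbb R^n$ of finite Lebesgue measure.

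To obtain the Sobolev embedding, I first combine \eqref{E:E} with the classical lower bound $\|T_\sigma\|_{L^p(\mathbb R^n) \to L^p(\mathbb R^n)} \gtrsim \|\sigma\|_{L^\infty(\mathbb R^n)}$ (recalled just before Conjecture~\ref{C:conjecture}), which yields
\begin{equation*}
\|\sigma\|_{L^\infty(\mathbb R^n)} \lesssim \sup_{j_1,\dots,j_n\in\mathbb Z} \Norm{\Gamma(s_1,\dots,s_n)\bigl[\wh\Psi\, D_{j_1,\dots,j_n}\sigma\bigr]}{E(\mathbb R^n)}
\end{equation*}
for every $\sigma \in L^\infty(\mathbb R^n)$. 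I then specialize $\sigma = f$ to be smooth with compact support in a fixed set $R \subset \mathbb R^n \setminus \{0\}$ on which $\wh\Psi$ is bounded below by a positive constant; such $R$ exists because $\wh\Psi$ is a nontrivial smooth bump. Since $f$ is compactly supported away from the origin, the product $\wh\Psi\cdot D_{j_1,\dots,j_n}f$ vanishes for all $(j_1,\dots,j_n)$ outside a finite set $J\subset\mathbb Z^n$ determined by the supports of $\wh\Psi$ and $f$, and the supremum therefore reduces to a maximum over $J$.

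It remains to establish, for each $(j_1,\dots,j_n)\in J$, the estimate
\begin{equation*}
\Norm{\Gamma(s_1,\dots,s_n)\bigl[\wh\Psi\, D_{j_1,\dots,j_n}f\bigr]}{E(\mathbb R^n)} \lesssim \|\Gamma(s_1,\dots,s_n)f\|_{E(\mathbb R^n)}.
\end{equation*}
I expect to prove this by combining two ingredients: (i)~the boundedness of the dilations $D_{j_1,\dots,j_n}$, for $(j_1,\dots,j_n)\in J$, on the anisotropic Bessel-potential-type space $\{u:\Gamma(s_1,\dots,s_n)u\in E(\mathbb R^n)\}$, which uses the finiteness of $J$ together with the nontrivial Boyd indices of $E$; and (ii)~the boundedness of pointwise multiplication by the smooth compactly supported function $\wh\Psi$ on the same space, via a fractional Leibniz/paraproduct decomposition combined with Mihlin-type multiplier bounds valid in $E$ under the assumption $0<\alpha_E\leq\beta_E<1$. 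A routine density argument then promotes the resulting Sobolev embedding to a class of test functions dense enough for the duality step in Proposition~\ref{P:optimality} to go through. The hard part will be step~(ii): verifying that multiplication by a smooth bump is bounded on the anisotropic Bessel-potential-type space built from a general r.i.\ space $E$ requires fractional multiplier machinery beyond the standard $L^p$ setting, and the Boyd-index assumption on $E$ is precisely what makes this step accessible.
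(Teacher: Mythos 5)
Your overall strategy---extracting from \eqref{E:E} and the lower bound $\|T_\sigma\|_{L^p\to L^p}\gtrsim\|\sigma\|_{L^\infty}$ an $L^\infty$ Sobolev-type estimate, controlling the finitely many nonzero pieces $\wh\Psi\,D_{j_1,\dots,j_n}\sigma$ by boundedness of integer dilations and of multiplication by a smooth bump on the $E$-based potential space, and then feeding the result into the duality argument of Proposition~\ref{P:optimality}---is the same as the paper's; your ingredients (i) and (ii) are exactly the paper's estimates \eqref{E:second1} and \eqref{E:first1}. Note, though, that the paper proves them more simply than you anticipate: they hold when $E=L^q$, $1<q<\infty$, and then transfer to $E$ by Boyd's interpolation theorem using $0<\alpha_E\le\beta_E<1$; no fractional Leibniz or paraproduct machinery is required, so what you flag as the hard step is in fact routine.

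The genuine gap is the final ``routine density argument.'' You obtain $\|f\|_{L^\infty}\lesssim\|\Gamma(s_1,\dots,s_n)f\|_{E(\R^n)}$ only for $f$ smooth and compactly supported in a fixed set $R$, whereas the duality step in the proof of Proposition~\ref{P:optimality} must be run with $f=(G_{s_1}\otimes\cdots\otimes G_{s_n})*g$ for $g$ ranging over (a norming subset of) the unit ball of $E(\R^n)$; such $f$ are never compactly supported, because the Bessel kernels have full support, and convergence in the norm $\|\Gamma(s_1,\dots,s_n)\cdot\|_{E(\R^n)}$ does not control the pointwise value $(G_{s_1}\otimes\cdots\otimes G_{s_n})*g(\tilde{x})$ that the duality argument evaluates (moreover, a general r.i.\ space with $0<\alpha_E\le\beta_E<1$ need not be separable, so compactly supported smooth functions need not even be dense in it). The paper closes exactly this gap by testing \eqref{E:E} not on compactly supported $\sigma$ but on $\sigma=[(G_{s_1}\otimes\cdots\otimes G_{s_n})*g]\,\wh{\eta}$ as in \eqref{E:sigma_definition}, with $\wh{\eta}$ a fixed cutoff equal to $1$ on $[7/8,9/8]^n$ and supported in $[3/4,5/4]^n$: the support of $\wh{\eta}$ forces $\wh\Psi\,D_{j_1,\dots,j_n}\sigma=0$ unless all $j_i\in\{-1,0,1\}$, the multiplication estimate \eqref{E:first1} (your ingredient (ii)) strips the cutoff and the convolution off the right-hand side, leaving $\|g\|_{E(\R^n)}$, and evaluating at $\tilde{x}\in[7/8,9/8]^n$, where $\wh{\eta}=1$, yields $\sup_{\|g\|_{E}\le1}|(G_{s_1}\otimes\cdots\otimes G_{s_n})*g(\tilde{x})|=\|G_{s_1}\otimes\cdots\otimes G_{s_n}\|_{E'(\R^n)}<\infty$, after which the rest of the proof of Proposition~\ref{P:optimality} applies verbatim. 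A smaller inaccuracy: to have only finitely many $(j_1,\dots,j_n)$ contribute, every coordinate projection of the support of $\sigma$ must be bounded away from $0$ and $\infty$, since $\wh\Psi$ is supported where each $|\xi_i|\in[1/2,2]$; a compact $R\subset\R^n\setminus\{0\}$ may still contain points with a vanishing coordinate, in which case infinitely many $j_i$ occur, so you should take $R$ inside a set such as $[3/4,5/4]^n$, as the paper does.
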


\begin{proof}
Assume that $\Phi$ is a smooth function on $\R^n$ with compactly supported Fourier transform and $a_1, \dots, a_n$ are fixed 
integers. We recall the estimates
\begin{equation}\label{E:first1}
\|\Gamma(s_1,\dots,s_n)[\wh{\Phi} F]\|_{E(\R^n)} \lesssim \|\Gamma(s_1,\dots,s_n) F\|_{E(\R^n)}
\end{equation}
and
\begin{equation}\label{E:second1}
\|\Gamma(s_1,\dots,s_n)[D_{a_1,\dots,a_n} F]\|_{E(\R^n)} \lesssim \|\Gamma(s_1,\dots,s_n) F\|_{E(\R^n)}
\end{equation}
for any function $F$ on $\R^n$. To verify~\eqref{E:first1} and~\eqref{E:second1} we first observe that they hold in the special 
case when $E=L^q$, $1<q<\infty$. Then we choose $1<q_1,q_2<\infty$ such that $1/q_1<\alpha_E \leq \beta_E <1/q_2$, and the 
conclusion follows by interpolating between the $L^{q_1}$ and $L^{q_2}$ endpoints via Boyd's interpolation 
theorem~\cite[Theorem 1]{Boyd} (see also the beginning of Section~\ref{S:interpolation} for the statement of this theorem).

Let us consider testing functions $\sigma$ of the form
\begin{equation}\label{E:sigma_definition}
\sigma=[(G_{s_1} \otimes \cdots \otimes G_{s_n})* g]\wh{\eta},
\end{equation}
where $\eta$ is a smooth function on $\R^n$ satisfying $\wh{\eta}=1$ on the cube $[7/8,9/8]^n$ and such that the support of 
$\wh{\eta}$ is contained in $[3/4,5/4]^n$. Taking into account the support properties of $\wh{\Psi}$ we deduce that 
$\wh{\Psi}\,D_{j_1,\dots , j_n} \sigma=0$ unless $j_i\in \{-1,0,1\}$ for each $i=1,2,\dots,n$. Inequality~\eqref{E:E} 
combined with the fact that $\|T_\sigma\|_{L^p(\R^n) \rightarrow L^p(\R^n)} \gtrsim \|\sigma\|_{L^\infty(\R^n)}$ yields
\begin{equation}\label{E:ee}
\|\sigma\|_{L^\infty(\R^n)}
\lesssim \sup_{j_1,\dots , j_n\in \{-1,0,1\}} \Norm{\Gamma \left(s_1,\dots , s_n\right)\big[  \wh{\Psi}\,
D_{j_1,\dots , j_n} \sigma \big]}{E(\mathbb{R}^n)}.
\end{equation}
Using~\eqref{E:first1} and~\eqref{E:second1}, this implies
\[
\|\sigma\|_{L^\infty(\R^n)} \lesssim \Norm{\Gamma \left(s_1,\dots , s_n\right)
 \sigma }{E(\mathbb{R}^n)}.
\]
An application of~\eqref{E:sigma_definition} and~\eqref{E:first1} then gives
\[
\|[(G_{s_1} \otimes \cdots \otimes G_{s_n})* g]\wh{\eta}\|_{L^\infty(\R^n)} \lesssim \Norm{g}{E(\mathbb{R}^n)}.
\]
Since $\wh{\eta}=1$ on $[7/8,9/8]^n$, the proof of Proposition~\ref{P:optimality} applied with 
$(\tilde{x}_1,\dots,\tilde{x}_n) \in [7/8,9/8]^n$ yields the conclusion.
\end{proof}

\begin{Example}\label{example}
We apply Corollary~\ref{C:cor} with the Lorentz space $E=\Lambda_{\phi_{s_1,\beta}}$, where $\beta \in \R$ (note that $\alpha_{\Lambda_{\phi_{s,\beta}}}=\beta_{\Lambda_{\phi_{s,\beta}}}=s_1$). Thus, a necessary condition for inequality
\begin{equation}\label{E:lorentz-example}
\|T_\sigma f\|_{L^p(\R^n)} \lesssim \sup_{j_1,\dots , j_n\in \mathbb{Z} } \Norm{\Gamma \left(s_1,\dots , s_n\right)\big[  \wh{\Psi}\,
D_{j_1,\dots , j_n} \sigma \big]}{\Lambda_{\phi_{s_1,\beta}}(\mathbb{R}^n)} \|f\|_{L^p(\R^n)}
\end{equation}
to be satisfied is the validity of embedding $\Lambda_{\phi_{s_1,\beta}}(\Omega) \hookrightarrow \Lambda_{\phi_{s_1,(1-s_1)d}}(\Omega)$ 
for all sets $\Omega \subseteq \R^n$ of finite measure. This is equivalent to the pointwise estimate $\phi_{s_1,(1-s_1)d}(t) \lesssim \phi_{s_1,\beta}(t)$ for $t$ near $0$ (see, e.g., \cite[Theorem 10.3.8]{PKJF}), which in turn yields the explicit necessary condition $\beta \geq (1-s_1)d$. 
In particular, $\beta=0$ is not allowed unless $d=0$, and estimate
\begin{equation*}
\|T_\sigma f\|_{L^p(\R^n)} \lesssim \sup_{j_1,\dots , j_n\in \mathbb{Z} } \Norm{\Gamma \left(s_1,\dots , s_n\right)\big[  \wh{\Psi}\,
D_{j_1,\dots , j_n} \sigma \big]}{L^{\frac{1}{s_1},1}(\mathbb{R}^n)} \|f\|_{L^p(\R^n)}
\end{equation*}
thus fails whenever at least one of the indices $s_2, \dots, s_n$ equals $s_1$. On the other hand, if $s_1\leq 1/2$ then we will 
prove that condition~\eqref{E:lorentz-example} is satisfied whenever $\beta>(1-s_1)d$, see Remark~\ref{R:remark_logarithm} below.
\end{Example}


\section{The core of the proof}\label{S:core}

In this section we prove Theorem~\ref{1/2MainResult} in the special case when $1/2<s_1 <1$. The general case then follows by 
interpolation; the details can be found in Section~\ref{S:interpolation}. We point out that   in fact we prove a~slightly stronger 
variant of Theorem~\ref{1/2MainResult} in this particular case; namely, we   replace the constant $K$ in~\eqref{hypoK} by the smaller 
constant
\[
\widetilde{K}=	\sup_{j_1,\dots , j_n\in \mathbb{Z} } \Norm{\Gamma \left(s_1,\dots , s_n\right)\big[  \wh{\Psi}\,
D_{j_1,\dots , j_n} \sigma \big]}{\Lambda_{\phi_{s_1, s_1d}}(\mathbb{R}^n)}.
\]

\begin{proof}[Proof of Theorem $\ref{1/2MainResult}:$ case $1/2<s_1 <1$]
Given a Schwartz function $\psi$ as in the statement of the theorem,  we define
a new Schwartz function $\psi_b$ ($\psi$ big) on $\mathbb{R}$ as follows:
\begin{equation}\label{defpsib}
\widehat{\psi_b}(\xi)= \widehat{\psi}(\xi/2)+\widehat{\psi}(\xi)+\widehat{\psi}(2\xi)\,.
\end{equation}
Then $\widehat{\psi_b}$ is supported in the annulus $1/4<|\xi| <4 $ and $\widehat{\psi_b}=1 $ on the support of $\wh{\psi}$.
Recalling the definition of $\Psi$ given in \eqref{defPsi}, we  introduce  a Schwartz function $\Psi_b$ on $\mathbb{R}^n$  
by setting
\[
 \widehat{\Psi_b}  =
 \overbrace{\widehat{\psi_b}\otimes \cdots \otimes   \widehat{\psi_b}\,\,}^{\textup{$n$ times }  }  .
\]
For $j\in \mathbb{Z}$ we can define the Littlewood-Paley operators corresponding to $\psi$ and $\psi_b$ in the  $k$th variable 
as the operators whose action on a function $f$ on $\rn$ is as follows:
\[
\Delta_{j}^{\psi, k}(f)(x_1,\dots, x_n)=\int_{\mathbb{R}} f\left(\dots,  x_{k}-y, \dots  \right) 2^{j} \psi\left(2^{j} y\right) dy
\]
and
\begin{equation*}
\Delta_{j}^{\psi_b, k}(f)(x_1,\dots, x_n)
=\int_{\mathbb{R}} f\left(\dots,  x_{k}-y, \dots \right) 2^{j} \psi_b\left(2^{j} y\right) d y.
\end{equation*}
Since $\widehat{\psi_b}=1$ on the support of $\widehat{\psi}$, $\widehat{\Psi_b}(2^{-j_1}\xi_1,\dots , 2^{-j_n}\xi_n)=1$ on the 
support of $\widehat{\Psi}(2^{-j_1}\xi_1,\dots, 2^{-j_n}\xi_n) $ for each $j_1,\dots, j_n \in \mathbb{Z}$ and so 	
\begin{align*}
		&\Delta_{j_1}^{\psi,1} \cdots \Delta_{j_n}^{\psi,n} T_{\sigma}(f)\left(x_1,\dots, x_n\right) \\
		&=\int_{\rn}  \widehat{f}(\xi_1,\dots \xi_n) \widehat{\Psi}\left(2^{-j_1} \xi_1,
		\dots, 2^{-j_n} \xi_n\right) \sigma(\xi_1,\dots, \xi_n) e^{2 \pi i (x_1 \xi_1+\cdots +  x_n \xi_n)} d \xi_1\cdots d \xi_n\\
		&=\int_{\rn}   \widehat{f}(\xi_1,\dots \xi_n)  \widehat{\Psi_b}\left(2^{-j_1} \xi_1,\dots,
		2^{-j_n} \xi_n \right) \widehat{\Psi}\left(2^{-j_1} \xi_1,\dots  ,  2^{-j_n} \xi_n\right) \\
		& \hspace{6cm}\sigma(\xi_1,\dots, \xi_n) e^{2 \pi i (x_1 \xi_1+ \cdots + x_n \xi_n)} d \xi_1 \cdots d \xi_n\\
		&=\int_{\rn}  (\Delta_{j_1}^{\psi_b,1}\cdots  \Delta_{j_n}^{\psi_b,n}f)\;\widehat{\;}\,(\xi_1,\dots , \xi_n) \widehat{\Psi}\left(2^{-j_1} \xi_1,\dots, 2^{-j_n} \xi_n\right) \\
		& \hspace{6cm}\sigma(\xi_1,\dots, \xi_n) e^{2 \pi i (x_1 \xi_1+ \cdots + x_n \xi_n)} d \xi_1\cdots d \xi_n\\
		&=\int_{\rn}   2^{j_1+\cdots + j_n} (\Delta_{j_1}^{\psi_b,1} \cdots \Delta_{j_n}^{\psi_b,n}f)\;\widehat{\;}\,(2^{j_1}\xi'_1,\dots , 2^{j_n}\xi'_n)\\
		& \hspace{3cm} \widehat{\Psi}\left(\xi'_1, \dots, \xi'_n\right) \sigma(2^{j_1}\xi'_1,\dots, 2^{j_n}\xi'_2) e^{2 \pi i (2^{j_1}x_1 \xi'_1+ \cdots + 2^{j_n} x_n \xi'_n)} d \xi'_1 \cdots d \xi'_n\\
		&= \int_{\rn}   (\Delta_{j_1}^{\psi_b,1} \cdots \Delta_{j_n}^{\psi_b,n}f)(2^{-j_1}y'_1,\dots, 2^{-j_n}y'_n)\\
		&\hspace{4cm} \big[\widehat{\Psi}
	D_{j_1,\dots , j_n} \sigma \big]\;\widehat{\;}\,(y'_1-2^{j_1}x_1, \dots, y'_n-2^{j_n}x_n )  d y'_1 \cdots d y'_n\\
		&=\int_{\rn}   (\Delta_{j_1}^{\psi_b,1} \cdots \Delta_{j_n}^{\psi_b,n}f)(2^{-j_1}y_1+x_1,\dots, 2^{-j_n}y_n+x_n)\\
		& \hspace{6cm} [\widehat{\Psi} D_{j_1,\dots , j_n} \sigma ]\;\widehat{\;}\, (y_1, \dots, y_n )  d y_1 \cdots d y_n\\
		&=\int_{\rn}   \dfrac{\big(\Delta_{j_1}^{\psi_b,1}\cdots \Delta_{j_n}^{\psi_b,n}f\big)(2^{-j_1}y_1+x_1,\dots,
		2^{-j_n}y_n+x_n)}{(1+|y_1|)^{s_1} \cdots (1+|y_n|)^{s_n}} \\
		& \hspace{2cm} (1+|y_1|)^{s_1} \cdots (1+|y_n|)^{s_n}
		[\widehat{\Psi} D_{j_1,\dots , j_n} \sigma ]\;\widehat{\;}\,(y_1, \dots, y_n ) \, d y_1\cdots d y_n.
	\end{align*}
	
Applying   H\"older's inequality in the Lorentz-Marcinkiewicz setting~\eqref{E:holder}, we obtain that
$ \lvert \Delta_{j_1}^{\psi,1} \cdots \Delta_{j_n}^{\psi,n} T_{\sigma}(f)\left(x_{1},\dots, x_n\right)\rvert$ is bounded by
\begin{equation*}
\begin{aligned}
&\Norm{\dfrac{\big(\Delta_{j_1}^{\psi_b,1} \cdots \Delta_{j_n}^{\psi_b,n}f\big)(2^{-j_1}y_1+x_1,\dots, 2^{-j_n}y_n+x_n)}{(1+|y_1|)^{s_1}\cdots  (1+|y_n|)^{s_n}}}{{M_{\om_{s_1,-s_1d}}(\mathbb{R}^n,dy_1\cdots dy_n)}}\\
&  \cdot \Norm{(1+|y_1|)^{s_1} \cdots (1+|y_n|)^{s_n}
[\widehat{\Psi} D_{j_1,\dots , j_n} \sigma ]\;\widehat{\;}\, (y_1, \dots, y_n )}{{\Lambda_{\om_{1-s_1,s_1d}}(\mathbb{R}^n)}}\,.
\end{aligned}
	\end{equation*}
The first term in this product is estimated by Corollary~\ref{cor88}  as follows: Since we are assuming $1>s_1>1/2$,
there is a $q$  such that $1< {1}/{s_1}<q<2$. Then for this $q$ we get
	\begin{align*}
		\begin{split}
			&\Norm{\dfrac{\big(\Delta_{j_1}^{\psi_b,1} \cdots \Delta_{j_n}^{\psi_b,n}f\big)(2^{-j_1}y_1+x_1,\dots, 2^{-j_n}y_n+x_n)}{(1+|y_1|)^{s_1}\cdots (1+|y_n|)^{s_n}}}{{M_{\om_{s_1,-s_1d}}}(\mathbb{R}^n)} \\ &\hspace{4cm}\leq C \mathcal{M}_{{L}^q}\big(\lvert \Delta_{j_1}^{\psi_b,1} \cdots \Delta_{j_n}^{\psi_b,n} f  \rvert \big) (x_1,\dots, x_n) .
		\end{split}
	\end{align*}

We estimate the second term in the product using Proposition~\ref{phiomega} and Lemma~\ref{lorentz}, i.e., the  Hausdorff-Young inequality adapted to these Lorentz spaces.    We obtain
\begin{equation}
		\begin{aligned}
			&\Norm{ \prod_{i=1}^n (1+|y_i|)^{s_i}   [\widehat{\Psi} D_{j_1,\dots, j_n} \sigma ]
\;\widehat{\;}\, (y_1,\dots,  y_n )}{{\Lambda_{\om_{1-s_1,s_1d}}   (\mathbb{R}^n)}}\notag \\
			&\leq C\Norm{ \prod_{i=1}^n (1+|y_i|^2)^{\f {s_i}2}  [\widehat{\Psi} D_{j_1,\dots, j_n} \sigma ]\;\widehat{\;}\, (y_1, \dots, y_n )}
			{\Lambda_{\om_{1-s_1,s_1d}}(\mathbb{R}^n)  } \notag \\
			&\leq C \Norm{\Gamma\left( s_1 , \dots, s_n \right) [\widehat{\Psi} D_{j_1,\dots, j_n} \sigma ]}{\Lambda_{\phi_{s_1,s_1d}}(\mathbb{R}^n)}\notag
			\leq C\widetilde{K}. \notag
		\end{aligned}
	\end{equation}
where we used the fact that $1<1/s_1<2$, which is
	a hypothesis of  Lemma~\ref{lorentz}.

We have now obtained the pointwise estimate
	\begin{equation}\lab{PW}
\lvert \Delta_{j_1}^{\psi,1}\cdots  \Delta_{j_n}^{\psi,n} T_{\sigma}(f) \rvert \leq C \widetilde{K} \mathcal{M}_{{L}^q}\big(\lvert \Delta_{j_1}^{\psi_b,1}\cdots  \Delta_{j_n}^{\psi_b,n} f \rvert\big).
	\end{equation}
	
	Now let $p\geq2$. Applying the  product type Littlewood-Paley theorem, the Fefferman-Stein inequality,
	and estimate \eqref{PW} we obtain
\begin{align*}
		\left\|T_{\sigma}(f)\right\|_{L^{p} \left(\mathbb{R}^{n}\right)} &\leq C\left\|\left(\sum_{j_1,\dots, j_n \in \mathbb{Z}}\left|\Delta_{j_1}^{\psi,1} \cdots \Delta_{j_n}^{\psi,n} T_{\sigma}(f)\right|^{2}\right)^{\frac{1}{2}}\right\|_{L^{p}\left(\mathbb{R}^{n}\right)}\\
		&\leq C\widetilde{K}\left\|\left(\sum_{j_1,\dots, j_n \in \mathbb{Z}}\left|  \mathcal{M}_{{L}^q}\left(\lvert \Delta_{j_1}^{\psi_b,1}  \cdots\Delta_{j_n}^{\psi_b,n} f \rvert\right) \right|^{2}\right)^{\frac{1}{2}}\right\|_{L^{p}\left(\mathbb{R}^{n}\right)}\\
		&\leq C\widetilde{K}\left\|\left(\sum_{j_1,\dots, j_n \in \mathbb{Z}} \left( \mathcal{M} \Big( \lvert \Delta_{j_1}^{\psi_b,1}  \cdots\Delta_{j_n}^{\psi_b,n} f \rvert^{q}\Big)\right)^{\frac{2}{q}}\right)^{\frac{1}{2}}\right\|_{L^{p}\left(\mathbb{R}^{n}\right)}\\
		&\leq C\widetilde{K}\left\|\left(\sum_{j_1,\dots, j_n \in \mathbb{Z}} \left( \mathcal{M} \Big( \lvert \Delta_{j_1}^{\psi_b,1}  \cdots\Delta_{j_n}^{\psi_b,n} f \rvert^{q}\Big)\right)^{\frac{2}{q}}\right)^{\frac{q}{2}}\right\|_{L^{\frac{p}{q}}\left(\mathbb{R}^{n}\right)}^\frac{1}{q}\\
		&\leq C'\widetilde{K}\left\|\left(\sum_{j_1,\dots, j_n \in \mathbb{Z}} \lvert \Delta_{j_1}^{\psi_b,1}  \cdots \Delta_{j_n}^{\psi_b,n} f \rvert^{q.\frac{2}{q}}\right)^{\frac{q}{2}}\right\|_{L^{\frac{p}{q}}\left(\mathbb{R}^{n}\right)}^\frac{1}{q}\\
		&\leq C''\widetilde{K}\left\|\left(\sum_{j_1,\dots, j_n \in \mathbb{Z}}  \lvert \Delta_{j_1}^{\psi_b,1} \cdots \Delta_{j_n}^{\psi_b,n} f \rvert^{2}\right)^{\frac{1}{2}}\right\|_{L^{p}\left(\mathbb{R}^{n}\right)}\\
		&\leq C'''\widetilde{K} \Norm{f}{{L}^p(\mathbb{R}^n)}\,.
	\end{align*}
The case   $1<p<2$ follows by duality.
\end{proof}

\section{Interpolation}\label{S:interpolation}

In the previous section we have proved the main theorem under the extra assumption that  $1/2<s_1<1$. This estimate will be useful for $p$ near $1$
or near $\infty$ while for $p=2$ we can use the trivial $L^\infty$ estimate for the multiplier. The final conclusion will be
a~consequence of an   interpolation result (Theorem~\ref{Interpolation}) discussed in this section.

We start with a few lemmas. In the proof we will use Boyd's interpolation theorem (see \cite[Theorem 1]{Boyd}) which states:
If $E$ is an~r.i. space on $\mathbb{R}_+$ such that $1/p_1 <\alpha_E  \leq \beta_E <1/p_0$ for some $1<p_0<p_1<\infty$, then the r.i.
space $E(\Omega)$ on $(\Omega, \mu)$ is interpolation between $L^{p_0}(\mu)$ and $L^{p_1}(\mu)$, i.e., $L^{p_0}(\mu) \cap  L^{p_1}(\mu)
\hookrightarrow E(\Omega) \hookrightarrow L^{p_0}(\mu) + L^{p_1}(\mu)$ and for any linear operator $T$ on $L^{p_0}(\mu) + L^{p_1}(\mu)$
such that $T$ is bounded on $L^{p_j}(\mu)$ for $j=0$ and $j=1$, it follows that $T$ is a~bounded operator on $E(\Omega)$.

\begin{Lemma}\label{KatoPonce}
Let $\Phi$ be a smooth function on $\R^n$ with compactly supported Fourier transform. Then, for any $0<s, s_2\dots,s_n<1$, $\ga>0$ and
any function $F$ on $\rn$, we have
\[
\Norm{ \Gamma\left( s ,s_2,\dots,  s_n \right) \big[ \widehat{\Phi} \,F  \big] }{\Lambda_{\phi_{s, \ga}} (\R^n) }  \le
\Norm{ \Gamma\left( s , s_2,\dots,  s_n \right) F }{\Lambda_{\phi_{s, \ga}} (\R^n)}\,.
\]
\end{Lemma}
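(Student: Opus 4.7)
My plan is to recast the inequality as a boundedness assertion for a linear operator and apply Boyd's interpolation theorem. Setting $G := \Gamma(s, s_2, \dots, s_n) F$ and
\[
T(G) := \Gamma(s, s_2, \dots, s_n)\bigl[\wh{\Phi} \cdot \Gamma(-s, -s_2, \dots, -s_n) G\bigr],
\]
the estimate reduces (up to a multiplicative constant depending on $\Phi$) to showing that $T$ is bounded on $\Lambda_{\phi_{s,\gamma}}(\R^n)$. By Proposition~\ref{phiomega}, $\gamma_{\phi_{s,\gamma}} = \delta_{\phi_{s,\gamma}} = s$, and since $\|\sigma_t\|_{\Lambda_{\phi_{s,\gamma}}} = m_{\phi_{s,\gamma}}(t)$, the Boyd indices of $\Lambda_{\phi_{s,\gamma}}$ both equal $s \in (0,1)$. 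I may therefore pick $1 < p_0 < 1/s < p_1 < \infty$ with $1/p_1 < \alpha_{\Lambda_{\phi_{s,\gamma}}} \leq \beta_{\Lambda_{\phi_{s,\gamma}}} < 1/p_0$, and Boyd's theorem (as stated at the start of this section) then reduces the task to the $L^{p_0}$- and $L^{p_1}$-boundedness of $T$.

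Both of these follow from the $L^p$-estimate
\[
\|\Gamma(s, s_2, \dots, s_n)[\wh{\Phi} F]\|_{L^p(\R^n)} \lesssim \|\Gamma(s, s_2, \dots, s_n) F\|_{L^p(\R^n)}, \qquad 1 < p < \infty.
\]
For this I would exploit the tensor product factorization $\Gamma(s,s_2,\dots,s_n) = \prod_{i=1}^n (I - \p_i^2)^{s_i/2}$ together with the fact that $\wh{\Phi}$ is a Schwartz function (being the Fourier transform of a smooth compactly supported function). Iterating the one-dimensional Kato-Ponce fractional Leibniz rule coordinate by coordinate, and using that the operators $(I - \p_i^2)^{s_i/2}$ mutually commute, $\Gamma[\wh{\Phi} F]$ decomposes as $\wh{\Phi} \cdot \Gamma F$ plus commutator terms in which $F$ is acted on by tensor products of Bessel potentials of strictly lower anisotropic order. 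These lower-order terms are absorbed into $\|\Gamma F\|_{L^p}$ via the embedding of anisotropic Bessel potential spaces, which is an immediate consequence of the $L^1$-integrability of the one-dimensional Bessel kernels $G_{s_i}$ whose pointwise bounds are recalled in Section~\ref{S:embedding}.

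The main obstacle is the somewhat delicate bookkeeping in the iterated Leibniz/commutator expansion in the anisotropic setting. An equivalent but more compact route invokes pseudo-differential calculus: multiplication by the Schwartz function $\wh{\Phi}$ is a pseudo-differential operator of order $-\infty$, while $\Gamma$ and $\Gamma^{-1}$ are of opposite anisotropic orders, so that $T = \Gamma \circ M_{\wh{\Phi}} \circ \Gamma^{-1}$ is an order-zero operator with principal symbol $\wh{\Phi}(x)$, hence bounded on $L^p(\R^n)$ for every $1 < p < \infty$.
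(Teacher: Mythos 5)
Your proof follows essentially the same route as the paper, which establishes this lemma (together with Lemmas~\ref{imaginary-Lorentz} and~\ref{Marc-inter}) by observing that $\alpha_{\Lambda_{\phi_{s,\gamma}}}=\beta_{\Lambda_{\phi_{s,\gamma}}}=s$ via Proposition~\ref{phiomega}, choosing $p_0<1/s<p_1$, and invoking Boyd's interpolation theorem on the known $L^{p_0}$ and $L^{p_1}$ estimates, exactly as you do. The only difference is that you also sketch the $L^p$ case, which the paper takes for granted; that sketch is fine in outline, apart from the slip that multiplication by a Schwartz function is an order-zero (not order-$(-\infty)$) operator, though the composed operator is indeed of order zero and $L^p$-bounded.
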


\begin{Lemma}\label{imaginary-Lorentz}
Let $0<s<1$ and $\gamma>0$. Then, for any $t_1,\dots , t_n \in \mathbb{R}$, we have
\[
\Norm{\Gamma\left( it_1,\dots, it_n\right) f}{ \Lambda_{\phi_{s,\ga}} (\rn)} \leq
C(p,n )(1+|t_1|) \cdots (1+|t_n|) \Norm{f}{\Lambda_{\phi_{s,\ga}} (\rn)}\,.
\]
\end{Lemma}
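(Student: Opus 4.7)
The plan is to factor $\Gamma(it_1,\dots,it_n)$ coordinate by coordinate and combine the one-dimensional Mihlin multiplier theorem with Boyd's interpolation theorem. Since the one-variable operators $(I-\partial_j^2)^{it_j/2}$ commute, we may write
\[
\Gamma(it_1,\dots,it_n) = T_{t_1}^{(1)} \circ \cdots \circ T_{t_n}^{(n)},
\]
where $T_{t_j}^{(j)}$ is the Fourier multiplier operator in the $j$-th variable with symbol $m_{t_j}(\xi_j):=(1+4\pi^2\xi_j^2)^{it_j/2}$. The product bound in the lemma will therefore follow once we prove that each $T_{t_j}^{(j)}$ is bounded on $\Lambda_{\phi_{s,\gamma}}(\R^n)$ with operator norm at most $C(s,\gamma,n)(1+|t_j|)$.

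The first step is to control each factor on the $L^p$ scale. Since $|m_{t_j}(\xi_j)|=1$ and, for each $k\ge 1$, $|\xi_j^k \partial_{\xi_j}^k m_{t_j}(\xi_j)| \lesssim (1+|t_j|)^k$, the one-dimensional Mihlin multiplier theorem applied to $m_{t_j}$ yields that $T_{t_j}^{(j)}$ is bounded on $L^p(\R)$ with norm $\lesssim 1+|t_j|$ for every $1<p<\infty$. Fubini's theorem then transfers this estimate to the same bound on $L^p(\R^n)$ for the operator acting only in the $j$-th variable.

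The second step is to move from the $L^p$ scale to the Lorentz space $\Lambda_{\phi_{s,\gamma}}(\R^n)$. By Proposition~\ref{phiomega} the Boyd indices of this space are both equal to $s\in(0,1)$, so we may pick $1<p_0<p_1<\infty$ with $1/p_1<s<1/p_0$ and invoke Boyd's interpolation theorem (stated at the beginning of this section). This yields
\[
\|T_{t_j}^{(j)} f\|_{\Lambda_{\phi_{s,\gamma}}(\R^n)} \leq C(s,\gamma,n)(1+|t_j|)\|f\|_{\Lambda_{\phi_{s,\gamma}}(\R^n)},
\]
the linear-in-$|t_j|$ dependence surviving because Boyd's theorem delivers an operator norm of the form $C(s,\gamma,n)\max(\|T\|_{L^{p_0}(\R^n)},\|T\|_{L^{p_1}(\R^n)})$, and both endpoint norms are $\lesssim 1+|t_j|$. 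Composing the resulting estimates over $j=1,\dots,n$ produces the factor $\prod_{j=1}^n(1+|t_j|)$ asserted in the lemma.

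The one point that really needs care is checking that the interpolation constant coming from Boyd's theorem does not inflate the dependence on $|t_j|$ beyond a single power; this is why I emphasize the \emph{linearity} (rather than merely polynomial growth) of the endpoint operator norms. A related minor subtlety is that in one dimension Mihlin's condition only requires one derivative, which is exactly what keeps each endpoint constant proportional to $1+|t_j|$ rather than to a higher power.
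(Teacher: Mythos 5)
Your proposal is correct and follows essentially the same route as the paper: establish the $L^{p_0}$ and $L^{p_1}$ bounds with constant linear in each $(1+|t_j|)$ (the paper takes the classical $L^p$ estimate for the imaginary powers as known, you derive it by factoring into one-variable Mihlin multipliers and using Fubini), and then pass to $\Lambda_{\phi_{s,\gamma}}(\rn)$ by Boyd's interpolation theorem, using that Proposition~\ref{phiomega} gives $\alpha_E=\beta_E=s\in(0,1)$ for this Lorentz space. Interpolating each one-variable factor separately and composing, rather than interpolating the full operator at once as the paper does, is an inessential variation.
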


\begin{Lemma}\label{Marc-inter}
Let $0<s<1 $. Let $m$ be a function satisfying \eqref{Eq-Marc2}. Then, for any $\ga>0$, we have
\[
\Norm{T_m (f) }{ \Lambda_{\phi_{s,\ga}} (\rn)}
\leq C(p,n )  \Norm{f}{\Lambda_{\phi_{s,\ga}}(\rn) }.
\]

\end{Lemma}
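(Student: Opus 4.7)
The plan is to derive Lemma~\ref{Marc-inter} directly from the classical Marcinkiewicz multiplier theorem by a single application of Boyd's interpolation theorem, which is already invoked earlier in the paper (see the proof of Corollary~\ref{C:cor}). The hypothesis~\eqref{Eq-Marc2} is precisely what guarantees that $T_m$ is bounded on $L^p(\R^n)$ for every $1<p<\infty$ with norm depending only on the $C_\alpha$ in~\eqref{Eq-Marc2} and on $p,n$. Since the target is an~r.i.\ space whose Boyd indices both collapse to the single value $s\in(0,1)$, the Lorentz space $\Lambda_{\phi_{s,\gamma}}$ is sandwiched between two such $L^p$ spaces in the interpolation sense, and the desired bound follows.

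More concretely, I would proceed as follows. First I would compute the Boyd indices of $E:=\Lambda_{\phi_{s,\gamma}}(\R_+)$. A~standard fact about Lorentz spaces (see, e.g., \cite[Theorem~II.5.13]{KPS}) is that $\alpha_E = \gamma_{\phi_{s,\gamma}}$ and $\beta_E = \delta_{\phi_{s,\gamma}}$; by Proposition~\ref{phiomega} both these indices equal $s$. Since $0<s<1$, I can choose exponents $p_0,p_1$ with
\[
1 < p_0 < \tfrac{1}{s} < p_1 < \infty, \qquad \tfrac{1}{p_1} < s < \tfrac{1}{p_0}.
\]
The classical Marcinkiewicz multiplier theorem then gives the two endpoint estimates
\[
\|T_m f\|_{L^{p_j}(\R^n)} \le C(p_j,n)\|f\|_{L^{p_j}(\R^n)}, \qquad j=0,1,
\]
and Boyd's interpolation theorem (stated at the top of Section~\ref{S:interpolation}) upgrades these two estimates to the r.i.\ space $\Lambda_{\phi_{s,\gamma}}(\R^n)$, yielding the asserted bound.

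I do not anticipate any genuine obstacle here: the only point that requires care is the identification of the Boyd indices of $\Lambda_{\phi_{s,\gamma}}$, but this is immediate from Proposition~\ref{phiomega} together with the general formula $\alpha_{\Lambda_\varphi}=\gamma_\varphi$, $\beta_{\Lambda_\varphi}=\delta_\varphi$. One small note is that Boyd's theorem as stated is for linear operators bounded on a~pair of $L^{p_j}$ spaces, which is exactly our situation, so no sublinear or quasi-linear version is needed. The constant $C(p,n)$ in the conclusion depends on the Marcinkiewicz constants $C_\alpha$ from~\eqref{Eq-Marc2}, on the chosen $p_0,p_1$ (which in turn depend only on $s$), and on $n$; the parameter $\gamma$ enters only through the interpolation constant associated with $\Lambda_{\phi_{s,\gamma}}$.
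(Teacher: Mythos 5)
Your proposal is correct and is essentially identical to the paper's argument: the authors also identify the Boyd indices of $\Lambda_{\phi_{s,\gamma}}$ with the indices of $\phi_{s,\gamma}$ (both equal to $s$ by Proposition~\ref{phiomega}), pick $p_0,p_1$ with $1/p_1<s<1/p_0$, invoke the $L^{p_j}$ bounds, and conclude via Boyd's interpolation theorem.
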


All these lemmas can be proved in the following way.

\begin{proof}
It is easy to check that if $\varphi\in \mathcal{P}$ and $\Lambda_\varphi$ is the~Lorentz space on $\mathbb{R}_+$,
then $\|\sigma_t\|_E = m_\varphi(t)$ for all $t>0$. This implies that $\alpha_{\Lambda_\varphi} = \gamma_\varphi$
and $\beta_{\Lambda_\varphi} = \delta_\varphi$.

Now we choose $p_0$ and $p_1\in (1, \infty)$ such that $1/p_1 < s< 1/p_0$ and let $E:=\Lambda_{\phi_{s, \ga}}$. Combining the above fact with Proposition
\ref{phiomega}, we conclude that
\[
1/p_1 <\alpha_E =\beta_E = s<1/p_0\,.
\]
Since the estimates hold for $L^{p_0}$ and  ${L}^{p_1}$ in place of the Lorentz space, Boyd's interpolation theorem
completes the proof.
\end{proof}

We will use the following lemma (see \cite[Lemma 2.1]{GrNg}).

\begin{Lemma}\label{lem:016}
Let    $0< p_0 \le p_1 < \infty$ and define $p$ via $1/p=(1-\theta)/p_0+\theta/p_1$, where $0 <\theta < 1$. Given
$f\in {\mathscr C}_0^\nf(\rn)$ and   $\ve>0,$ there exist smooth functions $h_j^\ve$, $j=1,\dots, N_\ve$, supported
in cubes with disjoint interiors, and there exist nonzero complex constants $c_j^\ve$ such that the functions
\begin{equation}\label{EQ-Fz}
f_{z}^{\ve}  =  \sum_{j=1}^{N_\ve} |c_j^\ve|^{\frac p{p_0} (1-z) +   \frac p{p_1}  z}  \, h_j^\ve
\end{equation}
satisfy
\begin{equation}\label{newest}
\big\|f_{\theta}^{\ve}-f\big\|_{L^{p_0}}+
\big\|f_{\theta}^{\ve}-f\big\|_{L^{p_1}}+
\big\|f_{\theta}^{\ve}-f\big\|_{L^{2}} <  \ve
\end{equation}
and
\begin{equation}\label{newest2}
\|f_{it}^{\ve}\|_{L^{p_0}}^{p_0} \le   \|f \|_{L^p}^p +\ve'  \, , \q
\|f_{1+it}^{\ve}\|_{L^{p_1}}^{p_1} \le    \|f \|_{L^p}^p  +\ve'  \, ,
\end{equation}
where $\ve'$ depends on $\ve ,p, \|f\|_{L^p}$ and tends to zero as $\ve\to 0$.
\end{Lemma}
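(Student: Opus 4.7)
The idea is the standard construction behind Stein's complex interpolation theorem: replace $f$ by a smooth approximation of a step function on a fine cubical grid and let the $c_j^\varepsilon$ scale like $|f|$ at the corresponding sample point, so that both the midline $z=\theta$ and the endpoint lines $\mathrm{Re}\,z\in\{0,1\}$ behave as required.

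Fix $\varepsilon>0$. Since $f\in\mathscr C_0^\infty(\rn)$ is uniformly continuous with compact support, choose $\delta>0$ so small that the oscillation of $f$ on any cube of side $\delta$ is less than $\varepsilon$, tile $\rn$ by half-open cubes $\{Q_j\}_{j=1}^{N_\varepsilon}$ of side $\delta$ meeting $\mathrm{supp}\,f$, and let $x_j$ denote the center of $Q_j$. Set $c_j^\varepsilon:=|f(x_j)|+\eta_0$ for a tiny $\eta_0>0$ ensuring nonvanishing. Let $\chi_j^\varepsilon$ be a smooth cutoff supported in $Q_j$, equal to $1$ on the concentric subcube $(1-\tau)Q_j$, and define
\[
h_j^\varepsilon(x):=\frac{f(x_j)}{|f(x_j)|}\,\chi_j^\varepsilon(x),
\]
so that the $h_j^\varepsilon$ have pairwise disjoint-interior supports and each carries the phase of $f$ at its sample point.

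At $z=\theta$ the exponent in \eqref{EQ-Fz} equals $1$, hence $f_\theta^\varepsilon=\sum_j|c_j^\varepsilon|h_j^\varepsilon$ is essentially the step approximation $\sum_j f(x_j)\chi_{Q_j}$ (up to thin rims of relative width $\tau$ and an $\eta_0$-shift), which converges to $f$ in $L^{p_0}$, $L^{p_1}$ and $L^2$ simultaneously as $\delta,\tau,\eta_0\to 0$, yielding \eqref{newest}. For the endpoints, disjointness of the supports gives the pointwise identity
\[
|f_{it}^\varepsilon(x)|=\sum_j|c_j^\varepsilon|^{p/p_0}|\chi_j^\varepsilon(x)|,
\]
and raising to the $p_0$-th power and integrating yields $\|f_{it}^\varepsilon\|_{L^{p_0}}^{p_0}=\sum_j|c_j^\varepsilon|^p\int|\chi_j^\varepsilon|^{p_0}$, which is a Riemann sum for $\int|f|^p=\|f\|_{L^p}^p$ with controllable error; the analogous computation on $\mathrm{Re}\,z=1$ in $L^{p_1}$ finishes the proof of \eqref{newest2}.

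The principal technical point is to engineer $\chi_j^\varepsilon$ so that $\int|\chi_j^\varepsilon|^q=(1+o_\tau(1))|Q_j|$ \emph{simultaneously} for $q\in\{p_0,p_1,2\}$; this is achieved by taking $\chi_j^\varepsilon$ to be a mollification of $\chi_{Q_j}$ at scale $\tau\delta$, since the transition rim $Q_j\setminus(1-\tau)Q_j$ has measure $O(\tau|Q_j|)$ and the exponent range is bounded. Choosing $\tau$, $\delta$ and $\eta_0$ in this order, small enough compared to $\varepsilon$, to the modulus of continuity of $f$, and to $\|f\|_{L^p}$, forces $\varepsilon'=\varepsilon'(\varepsilon)\to 0$ as $\varepsilon\to 0$, which completes the plan.
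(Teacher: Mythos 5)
Your construction is correct and is essentially the canonical one: the paper does not prove Lemma~\ref{lem:016} at all but imports it from \cite[Lemma 2.1]{GrNg}, and the argument behind that cited lemma is exactly the smooth "simple-function on a fine cubical grid" approximation you describe, with the exponent $\frac{p}{p_0}(1-z)+\frac{p}{p_1}z$ collapsing to $1$ at $z=\theta$ and having real part $p/p_0$, $p/p_1$ on the two boundary lines. Only cosmetic repairs are needed: set the phase factor equal to $1$ on cubes where $f(x_j)=0$ (as written $h_j^\ve$ is undefined there), mollify the shrunk cube $(1-\tau)Q_j$ rather than $\chi_{Q_j}$ itself so that the cutoff really is supported in $Q_j$, and note that for \eqref{newest2} the trivial bound $\int (\chi_j^\ve)^q\,dx\le |Q_j|$ suffices, so your "simultaneous $(1+o_\tau(1))|Q_j|$" point is only needed for the approximation statement \eqref{newest}.
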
	

The next lemma is a~variant of Lemma 3.7 from \cite{GSHor20}.

\begin{Lemma}\label{Lorentz maximal}
Let $0<\al,\be<1$, $\ga> 0$. Then  for some constant $ C( \al,\be ,\ga)$ we have
\begin{equation} \label{E:sunrise}
\int_0^\infty (f^*(r) r^{ \be-\al })^*(y) \phi_{\al,\ga}(y)\, \frac{dy}{y}
\leq C( \al,\be ,\ga) \int_0^\infty f^*(r) \phi_{\be,\ga}(r)\,\frac{dr}{r}\,.
\end{equation}
\end{Lemma}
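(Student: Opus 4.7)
The plan is to recast inequality~\eqref{E:sunrise}, via property~(ii) of Section~2 (applicable because the lower indices $\gamma_{\phi_{\alpha,\gamma}}=\alpha>0$ and $\gamma_{\phi_{\beta,\gamma}}=\beta>0$ by Proposition~\ref{phiomega}), as the Lorentz-norm estimate
\[
\bigl\|f^*(r)\,r^{\beta-\alpha}\bigr\|_{\Lambda_{\phi_{\alpha,\gamma}}} \lesssim \|f\|_{\Lambda_{\phi_{\beta,\gamma}}},
\]
and to split the argument according to the sign of $\beta-\alpha$. When $\beta\le\alpha$, the factor $r^{\beta-\alpha}$ is nonincreasing, so the product $g(r):=f^*(r)\,r^{\beta-\alpha}$ is itself nonincreasing; hence $g^*=g$ almost everywhere, and the inequality reduces to the pointwise equivalence $r^{\beta-\alpha}\phi_{\alpha,\gamma}(r)\approx\phi_{\beta,\gamma}(r)$, delivering it with a constant depending only on $\alpha,\beta,\gamma$.

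For the nontrivial case $\beta>\alpha$, in which $g$ need not be monotone, I would employ the layer-cake decomposition $f^*(r)=\int_0^\infty \chi_{[0,M(t))}(r)\,dt$ with $M(t):=|\{|f|>t\}|$, to write $g=\int_0^\infty g_t\,dt$ where $g_t(r):= r^{\beta-\alpha}\chi_{[0,M(t))}(r)$. Since $\Lambda_{\phi_{\alpha,\gamma}}$ is a~genuine Banach function space (its defining function is concave and vanishes at the origin), Minkowski's integral inequality applied to its norm yields
\[
\|g\|_{\Lambda_{\phi_{\alpha,\gamma}}} \le \int_0^\infty \|g_t\|_{\Lambda_{\phi_{\alpha,\gamma}}}\,dt.
\]
A~direct computation of the distribution function of $g_t$, which is strictly increasing on $[0,M(t))$, gives the explicit formula $g_t^*(y)=(M(t)-y)_+^{\beta-\alpha}$.

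The remaining ingredient is the uniform-in-$t$ bound $\|g_t\|_{\Lambda_{\phi_{\alpha,\gamma}}}\lesssim \phi_{\beta,\gamma}(M(t))$, which I would derive by splitting
\[
\int_0^{M(t)} (M(t)-y)^{\beta-\alpha}\,y^{\alpha-1}\log^{\gamma}(e+1/y)\,dy
\]
into $[0,M(t)/2]$ and $[M(t)/2,M(t)]$. On the former range $(M(t)-y)^{\beta-\alpha}\le M(t)^{\beta-\alpha}$ and the remaining integral is comparable to $\phi_{\alpha,\gamma}(M(t))$; on the latter, the factors $y^{\alpha-1}$ and $\log^{\gamma}(e+1/y)$ are slowly varying and can be replaced by $M(t)^{\alpha-1}$ and $\log^{\gamma}(e+1/M(t))$, while $\int_0^{M(t)/2} u^{\beta-\alpha}\,du$ is elementary. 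Both contributions total $M(t)^\beta\log^\gamma(e+1/M(t))\approx \phi_{\beta,\gamma}(M(t))$. Inserting this bound into the Minkowski estimate and applying the layer-cake formula in reverse, namely $\int_0^\infty \phi_{\beta,\gamma}(M(t))\,dt\approx\int_0^\infty f^*(r)\,\phi_{\beta,\gamma}(r)/r\,dr$ (again via property~(ii)), finishes the proof. The main obstacle is verifying the characteristic-function bound uniformly in $M(t)$: one must confirm that the comparison $\log(e+1/y)\approx\log(e+1/M(t))$ on $[M(t)/2, M(t)]$ is valid for every $M(t)>0$, so that both the regime of small $M(t)$ (where the logarithm blows up) and of large $M(t)$ (where it is bounded and the power $M(t)^\beta$ dominates) are controlled by a single estimate.
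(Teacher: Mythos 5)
Your argument is correct, and it differs genuinely from the paper's. The paper proves the case $\beta>\alpha$ by a sunrise-type argument: it bounds $(f^*(r)r^{\beta-\alpha})^*(y)$ by the least nonincreasing majorant $\sup_{r\ge y}f^*(r)r^{\beta-\alpha}$, decomposes the open set where this majorant strictly exceeds $f^*(y)y^{\beta-\alpha}$ into countably many intervals $(a_k,b_k)$ on which the supremum equals $f^*(b_k)b_k^{\beta-\alpha}$, and estimates the contribution of each interval by comparing $\int_{a_k}^{b_k}y^{\alpha-1}\log^{\gamma}(e+1/y)\,dy$ with the corresponding integral against $f^*(y)y^{\beta-1}$; this is elementary and never uses that $\|\cdot\|_{\Lambda_{\phi_{\alpha,\gamma}}}$ is a norm. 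You instead exploit the Banach space structure: the layer-cake decomposition $f^*=\int_0^\infty\chi_{[0,M(t))}\,dt$, Minkowski's integral inequality in $\Lambda_{\phi_{\alpha,\gamma}}$ (legitimate since $\phi_{\alpha,\gamma}$ is concave with $\phi_{\alpha,\gamma}(0)=0$, so the functional is a Banach function norm with the Fatou property), the explicit rearrangement $\bigl(r^{\beta-\alpha}\chi_{[0,M)}\bigr)^*(y)=(M-y)_+^{\beta-\alpha}$, the uniform bound $\|g_t\|_{\Lambda_{\phi_{\alpha,\gamma}}}\lesssim\phi_{\beta,\gamma}(M(t))$ (your split at $M(t)/2$ and the comparison $\log(e+1/y)\approx\log(e+1/M(t))$ there are indeed uniform in $M(t)$, since $\log(e+2/M)\le\log 2+\log(e+1/M)\le 2\log(e+1/M)$), and the resummation $\int_0^\infty\phi_{\beta,\gamma}(\mu_f(t))\,dt=\int_0^\infty f^*\,d\phi_{\beta,\gamma}\approx\int_0^\infty f^*(r)\phi_{\beta,\gamma}(r)\,\frac{dr}{r}$, which is the standard identity $\|f\|_{\Lambda_\varphi}=\int_0^\infty\varphi(\mu_f(t))\,dt$ (you should state it in this form, or derive it via Fubini from $|\{t:\mu_f(t)>s\}|=f^*(s)$, rather than leaving it as ``layer-cake in reverse''); you may also note that if $\mu_f(t)=\infty$ for some $t>0$ the right-hand side of \eqref{E:sunrise} is infinite and there is nothing to prove. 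Your route is more modular and would generalize to any Lorentz norm $\Lambda_\varphi$ with $0<\gamma_\varphi\le\delta_\varphi<1$ once the single-block estimate is checked, at the cost of invoking normability and Minkowski; the paper's proof is more self-contained and works directly at the level of rearrangements. Both give constants depending only on $\alpha,\beta,\gamma$, so there is no gap.
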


\begin{proof}
Recall that for given $s \in (0, 1)$ and $\ga>0$, we have the equivalence
$\phi_{s,\ga}(t)\approx t^s \log^\ga \big(e + \frac{1}{t}\big)$
on $(0, \infty)$. Now observe that the estimate  \eqref{E:sunrise} is trivial when $\be\le \al$
as $(f^*(r) r^{ \be-\al })^*=  f^*(r) r^{ \be-\al } $. Thus we may assume that $\be>\al$ in the
proof below. We may also assume that
\[
\int_0^\infty f^*(r) r^{\be -1} \log^{\gamma}\Big(e+\f 1r\Big) \,dr < \infty\,,
\]
otherwise the right-hand side of~\eqref{E:sunrise} is infinite.
Then
$$
\sup_{r>0} f^*(r) r^{\be } \log^\ga \Big(e + \frac{1}{r}\Big)\leq C,
$$
and thus $\lim_{r\to \infty} f^*(r) r^{\be-\al }=0$.     Since the set of discontinuity points
of $f^{*}$ is at most countable ($f^{*}$ is right continuous), we may assume without loss of generality that
function $f^{*}$ is continuous. Then $\sup_{y\leq r<\infty} f^*(r) r^{\be-\al}$ is attained for any $y>0$ and
so the set
\[
M=\{y\in (0,\infty): \sup_{y\leq r<\infty} f^*(r) r^{\be-\al} > f^*(y) y^{\be-\al}\}
\]
is open. Hence, $M$ is a countable union of open intervals, namely, $M=\bigcup_{k\in S} (a_k,b_k)$, where
$S$ is a~countable set of positive integers. Also, observe that if $y\in (a_k,b_k)$, then
\[
\sup_{y\leq r<\infty} f^*(r) r^{\be-\al} 
=f^*(b_k) b_k^{\be-\al}\,.
\]
We have
\begin{align*}
& \hspace{-.2in} \int_0^\infty (f^*(r) r^{\be-\al})^*(y) y^{\al -1} \log^{\ga}\Big( e+\f 1y\Big) \,dy \\
&\leq \int_0^\infty \sup_{y\leq r<\infty} f^*(r) r^{\be-\al} y^{\al-1} \log^{\ga}\Big( e+\f 1y\Big)  \,dy\\
&\le \int_{(0,\infty)\setminus M} f^*(y) y^{\be-1} \log^{\ga}\Big( e+\f 1y\Big) \,dy\\
&\hspace{1in}+\sum_{k\in S} f^*(b_k) b_k^{\be-\al} \int_{a_k}^{b_k} y^{\al-1} \log^{\ga}\Big( e+\f 1y\Big) \,dy\,.
\end{align*}
Furthermore, for every $k\in S$,
\begin{align*}
&f^*(b_k) b_k^{\be-\al} \int_{a_k}^{b_k} y^{\al-1} \log^{\ga} \Big( e+\f 1y\Big) \,dy \\
&\leq f^*(b_k) b_k^{\be-\al}\int_{\max(a_k,\frac{b_k}{2})}^{b_k} y^{\al-1} \log^{\ga}\Big( e+\f 1y\Big)\,dy \cdot
\frac{\int_0^{b_k} y^{\al-1} \log^{\ga}( e+\f 1y)\,dy}{\int_{\frac{b_k}{2}}^{b_k} y^{\al-1} \log^{\ga}( e+\f 1y )\,dy}\\
&=C_{\al,\ga}  f^*(b_k) b_k^{\be-\al} \int_{\max(a_k,\frac{b_k}{2})}^{b_k} y^{\al-1}
\log^{\ga}\Big( e+\f 1y\Big) \, dy\\
&\leq C_{\al,\be, \ga}  \int_{a_k}^{b_k} f^*(y) y^{\be-1} \log^{\ga}\Big( e+\f 1y\Big)  \,dy\,.
\end{align*}
Therefore,
\begin{align*}
& \int_0^\infty (f^*(r) r^{\be-\al})^*(y) y^{\al-1} \log^{\ga}\Big( e+\f 1y\Big) \,dy \\
&\leq \int_0^\infty f^*(y) y^{\be-1} \log^{\ga} \Big( e+\f 1y\Big)\,dy
+ C_{\al,\be,\ga}\sum_{k\in S} \int_{a_k}^{b_k} f^*(y) y^{\be-1} \log^{\ga}\Big( e+\f 1y\Big) \,dy\\
&\leq  (C_{\al,\be,\ga}+1) \int_0^\infty f^*(y) y^{\be-1} \log^{\ga} \Big( e+\f 1y\Big) \,dy\,.
\end{align*}
This proves \eqref{E:sunrise}.
\end{proof}

The main interpolation tool in this work is the following.

\begin{Theorem}\label{Interpolation}
Let $1<p_0<\infty $ and  suppose that $\frac{1}{2}<s_1^0 \le s_2^0 \le \cdots \le s_n^0<1$ and that
$0< s_{1}^1\le s_2^1\le \cdots \le s_n^1  < 1$. Assume that exactly $d$ of the numbers $s_2^0,\dots,s_n^0$ are equal
to $s_1^0$, and exactly $d$ of the numbers $s_2^1,\dots,s_n^1$ are equal to $s_1^1$.  Let $\Psi$ be as in \eqref{defPsi}.
Suppose that  for   all nonzero $f \in \mathscr{C}_0^{\infty} (\mathbb{R}^n)$ we have
\begin{equation}\label{E:first}
\big\|T_{\sigma} f\big\|_{L^{p_0}(\mathbb{R}^n)}
\leq  K_0 \sup_{ j_1,\dots, j_n \in \mathbb{Z}  }\big\|\Gamma\big( s_1^0  , \dots , s_n^0  \big) [ \widehat{\Psi} D_{j_1,\dots, j_n}  \sigma  ] \big\|_{\Lambda_{\phi_{s_1^0, s_1^0 d}}(\mathbb{R}^n)} \|f\|_{L^{p_{0}}(\mathbb{R}^n)}
\end{equation}
and
\begin{equation}\label{E:second}
\big\|T_{\sigma} f\big\|_{L^{p_1}(\mathbb{R}^n)}
\!\leq  \! K_1\!\!\!  \sup_{ j_1,\dots, j_n \in \mathbb{Z}  }\big\|\Gamma\big( s_1^1  , \dots , s_n^1  \big) [ \widehat{\Psi} D_{j_1,\dots, j_n}  \sigma] \big\|_{\Lambda_{\phi_{s_1^1, (1-s_1^1) d}}(\mathbb{R}^n)} \|f\|_{L^{p_{1}}(\mathbb{R}^n)}.
\end{equation}
Let   $0<\theta <1$ and suppose
\[
\frac{1}{p}= \frac{1-\theta}{p_0}+\frac{\theta}{2},\quad s_j =(1-\theta)s_j^0+ \theta s_j^1, \quad j=1,\dots , n.
\]
Then there is a constant $C_{*}=C_{*}(p_0,\theta,n,d, \psi,s_j^0,s_j^1)$ such that for all $f \in \mathscr{C}_0^{\infty}(\mathbb{R}^n)$
\begin{equation*}
\big\|T_{\sigma}f\big\|_{L^{p}(\mathbb{R}^n)} \leq C_{*} K_0^{1-\theta} K_1^{\theta}
\sup_{j_1,\dots, j_n\in \mathbb{Z} }\big\|\Gamma\left( s_1,\dots, s_n  \right)[\widehat{\Psi} D_{j_1,\dots, j_n} \sigma]\big\|_{\Lambda_{\phi_{s_1, d}} (\mathbb{R}^n)} \|f\|_{L^{p }(\mathbb{R}^n)}\,.
\end{equation*}
\end{Theorem}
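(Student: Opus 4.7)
The plan is to deduce Theorem~\ref{Interpolation} from Stein's complex interpolation theorem for analytic families of operators. Set $s_j(z) := (1-z)s_j^0 + z s_j^1$, so that $s_j(\theta)=s_j$, and let $\tau := \Gamma(s_1, \dots, s_n)\sigma$. I would build an analytic family of multipliers $\{\sigma_z\}_{0 \le \mathrm{Re}(z) \le 1}$ with $\sigma_\theta = \sigma$, together with an analytic family $\{f_z^\varepsilon\}$ produced by Lemma~\ref{lem:016} (and an analogous family $\{g_z^\varepsilon\}$ for a dual test function), so that $z \mapsto \langle T_{\sigma_z} f_z^\varepsilon, g_z^\varepsilon\rangle$ is bounded and analytic in the strip and satisfies, on the two boundary lines, bounds of the form $K_0 K (1+|\mathrm{Im}(z)|)^N\|f\|_{L^p}\|g\|_{L^{p'}}$ and $K_1 K (1+|\mathrm{Im}(z)|)^N\|f\|_{L^p}\|g\|_{L^{p'}}$, where $K := \sup_{j_1,\dots,j_n \in \mathbb{Z}} \|\Gamma(s_1,\dots,s_n)[\widehat{\Psi}D_{j_1,\dots,j_n}\sigma]\|_{\Lambda_{\phi_{s_1,d}}}$. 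Stein's theorem applied at $z=\theta$ then yields the required $L^p$ estimate, and sending $\varepsilon\to 0$ concludes.

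The family $\sigma_z$ is defined via $\sigma_z := \Gamma(-s_1(z),\dots,-s_n(z))\tau_z$, where $\{\tau_z\}$ is an analytic family with $\tau_\theta = \tau$ to be constructed next. The point of this choice is the identity $\Gamma(s_1^0,\dots,s_n^0)\sigma_z = \Gamma(-z(s_1^1-s_1^0), \dots,-z(s_n^1-s_n^0))\tau_z$, whose exponents are purely imaginary on $\mathrm{Re}(z)=0$; an analogous identity holds on $\mathrm{Re}(z)=1$ for $\Gamma(s_1^1,\dots,s_n^1)\sigma_z$. Lemma~\ref{KatoPonce} pulls the cutoff $\widehat{\Psi}$ outside the relevant Lorentz norm, Lemma~\ref{imaginary-Lorentz} absorbs the imaginary-power operators into a $(1+|t|)^N$ factor on the appropriate Lorentz spaces, and Lemma~\ref{Marc-inter} handles the multi-dilation $D_{j_1,\dots,j_n}$. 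Thus, hypotheses \eqref{E:first} and \eqref{E:second}, applied respectively to $\sigma_{it}$ and $\sigma_{1+it}$, reduce the whole problem to the boundary Lorentz-norm bounds $\|\tau_{it}\|_{\Lambda_{\phi_{s_1^0, s_1^0 d}}} \lesssim (1+|t|)^N K$ and $\|\tau_{1+it}\|_{\Lambda_{\phi_{s_1^1, (1-s_1^1) d}}} \lesssim (1+|t|)^N K$, together with the $L^{p_j}$ bounds on $f_z^\varepsilon, g_z^\varepsilon$ provided by Lemma~\ref{lem:016}.

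To produce such $\{\tau_z\}$, I would use a rearrangement-adapted decomposition: writing $\tau = u|\tau|$ with $|u|=1$ and decomposing $|\tau|$ by its dyadic level sets $E_k := \{2^k < |\tau| \le 2^{k+1}\}$, set $\tau_z := u \sum_k 2^{kA(z)} \chi_{E_k}$ for a suitable affine function $A$ with $A(\theta)=1$ whose boundary values on $\mathrm{Re}(z)=0, 1$ are calibrated to land in the two endpoint Lorentz spaces. The critical tool here is Lemma~\ref{Lorentz maximal}: multiplication by a power of the rearrangement variable converts a $\Lambda_{\phi_{\beta,\gamma}}$-bound into a $\Lambda_{\phi_{\alpha,\gamma}}$-bound while keeping the logarithmic exponent $\gamma$ fixed. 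Combined with the fact that $\widehat{\Psi}D_{j_1,\dots,j_n}\sigma$ has Fourier-side support in a fixed bounded set (so that all relevant non-increasing rearrangements vanish for $t$ greater than a universal constant), this lets us transfer the hypothesis $\|\tau\|_{\Lambda_{\phi_{s_1,d}}}\le K$ into the two boundary estimates above.

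The main obstacle is precisely the calibration in this last step: the logarithmic exponents $s_1^0 d$ and $(1-s_1^1) d$ at the endpoints do not interpolate to $d$ at $z=\theta$ by a naive Calder\'on product, since $(1-\theta) s_1^0 + \theta(1-s_1^1) \ne 1$ in general. Circumventing this requires that $\tau_z$ be designed to exploit both the local nature of the Lorentz condition (coming from the fixed-support cutoff $\widehat{\Psi}$) and Lemma~\ref{Lorentz maximal}, which preserves the logarithmic power while shifting the principal exponent; the affine function $A(z)$ must be tuned against the scaling introduced by the operators $\Gamma(-z(s_j^1-s_j^0),\dots)$ so that the endpoint Lorentz-norm bounds follow in each case from a single application of Lemma~\ref{Lorentz maximal}. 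Once these boundary estimates are secured, Stein's interpolation theorem delivers the conclusion $\|T_\sigma f\|_{L^p} \le C_* K_0^{1-\theta} K_1^\theta K \|f\|_{L^p}$.
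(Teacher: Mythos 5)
Your overall skeleton (an analytic family of multipliers, the families $f_z^\varepsilon,g_z^\varepsilon$ from Lemma~\ref{lem:016}, reduction of the two boundary lines to Lorentz-norm bounds via Lemmas~\ref{KatoPonce}, \ref{imaginary-Lorentz}, \ref{Marc-inter}, and Lemma~\ref{Lorentz maximal} as the rearrangement tool) does match the paper's strategy in outline, but the decisive step --- the actual construction of $\sigma_z$ and the verification of the endpoint Lorentz bounds --- is left open, and the construction you sketch would not work as stated. You set $\tau=\Gamma(s_1,\dots,s_n)\sigma$ and twist its dyadic level sets, but the hypothesis gives no control whatsoever on this global function: for a typical Marcinkiewicz-type $\sigma$, the function $\Gamma(s_1,\dots,s_n)\sigma$ lies in no rearrangement-invariant space over $\R^n$, so its level sets carry no information related to $K$. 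The constant $K$ only controls the localized, rescaled pieces $\varphi_{k_1,\dots,k_n}=\Gamma(s_1,\dots,s_n)[\widehat{\Psi}\,D_{k_1,\dots,k_n}\sigma]$ uniformly in $(k_1,\dots,k_n)$, so the analytic family must be assembled scale by scale; the paper takes $\sigma_z=\sum_{k_1,\dots,k_n}D_{-k_1,\dots,-k_n}\big[\widehat{\Psi_b}\,\Gamma(-P_1(z),\dots,-P_n(z))\big[\varphi_{k_1,\dots,k_n}\,h_{k_1,\dots,k_n}^{\,s_1-P_1(z)}\big]\big]$, where each $h_{k_1,\dots,k_n}$ is a measure-preserving map with $|\varphi_{k_1,\dots,k_n}|=\varphi_{k_1,\dots,k_n}^*\circ h_{k_1,\dots,k_n}$, and where the cutoffs $\widehat{\Psi},\widehat{\Psi_b}$ guarantee that only finitely many terms interact at each scale. (Your parenthetical claim that the compact support of $\widehat{\Psi}D_{j_1,\dots,j_n}\sigma$ forces the rearrangements of $\varphi_{j_1,\dots,j_n}$ to vanish beyond a universal $t$ is also false: applying $\Gamma(s_1,\dots,s_n)$ destroys compact support.)

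Moreover, the issue you yourself flag as ``the main obstacle'' --- that the endpoint logarithmic exponents $s_1^0d$ and $(1-s_1^1)d$ do not interpolate to $d$ --- is exactly where your argument stops: you assert that the affine exponent $A(z)$ ``must be tuned'' but never produce it or verify the boundary estimates. In fact no calibration is needed, and this is precisely why the conclusion carries the log power $d$: since $s_1^0d\le d$ and $(1-s_1^1)d\le d$, one has $\Lambda_{\phi_{s_1,\gamma}}\supseteq\Lambda_{\phi_{s_1,d}}$ with norm domination for each endpoint value of $\gamma$, so it suffices to bound each boundary norm by the single central norm, accepting the loss. The twist $h_{k}^{\,s_1-P_1(z)}$ achieves this: on $\operatorname{Re}z=0$ its modulus is $h_k^{\,s_1-s_1^0}$, so by measure preservation the relevant norm equals $\|\varphi_k^*(r)\,r^{s_1-s_1^0}\|_{\Lambda_{\phi_{s_1^0,s_1^0d}}(0,\infty)}$, which Lemma~\ref{Lorentz maximal} (with the log exponent held fixed) bounds by $\|\varphi_k\|_{\Lambda_{\phi_{s_1,s_1^0d}}}\le\|\varphi_k\|_{\Lambda_{\phi_{s_1,d}}}$, and similarly on $\operatorname{Re}z=1$. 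Note that your multiplicative twisting $|\tau|^{A(z)}$, unlike this additive shift of the principal exponent, also multiplies the logarithmic exponent and does not mesh with Lemma~\ref{Lorentz maximal} in the way you propose. Finally, Stein/Hirschman interpolation also requires an admissible growth bound for $F(\tau+it)$ in the interior of the strip; in the paper this is obtained by estimating $\|\sigma_{\tau+it}\|_{L^\infty}$ through the Sobolev embedding of Proposition~\ref{P:embedding}, a step your proposal does not address.
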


\begin{proof}
Let us fix a function $\sigma$ such that
\begin{equation}\label{assump77}
\sup_{  j_1,\dots, j_n \in \mathbb{Z}   }\Norm{ \Gamma(s_1,\dots, s_n) \big[ \widehat{\Psi} \,
D_{j_1,\dots , j_n}  \sigma   \big] }{  \Lambda_{\phi_{s_1,   d}} (\rn) } < \infty
\end{equation}
and for $j_1, \dots , j_n \in \mathbb{Z}$ define
\[
\varphi_{j_1,\dots , j_n}  = \Gamma(s_1,\dots , s_n) \big[ \widehat{\Psi} \,  D_{j_1,\dots , j_n}  \sigma \big]\,.
\]
Since $\varphi_{j_1,\dots, j_n} \in \Lambda_{\phi_{s_1, d}} (\rn)$, we have $\sup_{\lambda>0} \phi_{s_1, d}(\lambda)\varphi^{*}_{j_1,\dots, j_n}(\lambda) < \infty$ and so $\varphi^{*}_{j_1,\dots, j_n}(\lambda) $ converges to $ 0$ as $\lambda \rightarrow \infty$.
Now by  \cite[Corollary 7.6 in Chapter 2]{BSInterpol88}, there is a~measure preserving transformation $h_{j_1,\dots, j_n}: \rn \rightarrow (0,\infty)$
such that
\begin{equation}\label{BSha}
|\varphi_{j_1,\dots, j_n}| = \varphi_{j_1,\dots, j_n}^* \circ h_{j_1,\dots, j_n}\,.
\end{equation}
	
Recall that $s_1^0\le \cdots \le s_n^0$ and $s_1^1\le \cdots \le  s_n^1$.  For $z \in \mathbb{C}$ with $0\leq \textup{Re}(z)\leq 1$,
we define  complex polynomials
\[
P_\rho(z) =   s_\rho^0 (1-z) +s_\rho^1\, z
\]
for $\rho=1,2,\dots , n$. 	Let $\widehat{\Psi_b}  =\widehat{\psi_b}\otimes \cdots\otimes \widehat{\psi_b} $  where $\psi_b$  is defined in \eqref{defpsib}. We define the family of multipliers
\begin{equation}\label{E:sigma}
\sigma_z  = \sum_{k_1,\dots, k_n \in \mathbb{Z}}D_{-k_1,\dots, -k_n}
\bigg[  \widehat{\Psi_b} \, \Gamma\big(-P_1(z) ,\dots, - P_n(z) \big)
\Big[ \varphi_{k_1,\dots, k_n} h_{k_1,\dots, k_n}^{s_1-P_1(z)}  \Big] \bigg]\,.
\end{equation}
As $P_j(\theta) = s_j$ for $1\le j \le n $  and
$\sum_{k_1,\dots , k_n \in \mathbb{Z}} \widehat{\Psi}\left(   2^{-k_1}\xi_1,\dots , 2^{-k_n}\xi_n  \right)=1$ when all $\xi_k\neq 0$,  it follows that    $\sigma_\theta= \sigma$ a.e.

Fix $f,g \in \mathscr{C}_0^{\infty}(\mathbb{R}^n)$. Given $\epsilon >0$ find $f_{z}^{\epsilon}$ and $g_z^{\epsilon}$ as in Lemma~\ref{lem:016}.
Thus we have $\Norm{f^{\epsilon}_{\theta}-f}{{L}^p}+\Norm{f^{\epsilon}_{\theta}-f}{{L}^2}<\epsilon$, $\Norm{g_{\theta}^{\epsilon}-g}{{L}^{p^{\prime}}} +\Norm{g_{\theta}^{\epsilon}-g}{{L}^{2}}\leq \epsilon$,
\begin{equation*}
\begin{aligned}
\Norm{f^{\epsilon}_{it}}{{L}^{p_0}(\rn)}^{p_0} &\leq  \Norm{f}{{L}^p(\rn)}^p +\epsilon'  , \quad \Norm{f^{\epsilon}_{1+it}}{{L}^{2}(\rn)}^{2} \leq   \Norm{f}{{L}^{p}(\rn)}^p +\epsilon' ,\\
\Norm{g^{\epsilon}_{it}}{{L}^{p^{\prime}_0}(\rn)}^{p_0'} &\leq   \Norm{g}{{L}^{p^{\prime}}(\rn)}^{p^{\prime}} + \epsilon'
, \quad \Norm{g^{\epsilon}_{1+it}}{{L}^{2}(\mathbb{R}^n)}^{2} \leq \Norm{g}{{L}^{p^{\prime}}(\rn)}^{p^{\prime}} + \epsilon' \,.
\end{aligned}
\end{equation*}
	
Now define  on the unit  strip $\{  z \in \mathbb{C} : 0 \le \textup{Re}(z) \le 1  \}$  the following function
	\begin{equation}\label{E:definition_F}
	F(z) = \int_{\rn} \sigma_z(\xi ) \widehat{f^{\epsilon}_{z}}(\xi ) \widehat{g_{z}^{\epsilon}}(\xi ) d\xi
	= \int_{\rn} T_{\sigma_z}(f_z^\epsilon)(x)\,  g_{z}^{\epsilon} (-x) \, dx
	\end{equation}
which is  analytic in the interior of this  strip and is continuous on its closure.   H\"older's inequality
and one hypothesis of the theorem give
	\begin{align}\begin{split}\label{RETT}
	&|F(it)|\leq \Norm{T_{\sigma_{  {   it  } }}(f^{\epsilon}_{it})}{{L}^{p_0}}
	\Norm{g^{\epsilon}_{   {   it   }   }}{{L}^{p_0^{\prime}}}  \\
	&\leq  K_0 \sup_{j_1,\dots, j_n\in \mathbb{Z}  }\Norm{ \Gamma\left( s^{0}_1 ,\dots,  s^{0}_n \right) \big[ \widehat{\Psi} \,  D_{j_1,\dots, j_n}  \sigma_{it}  \big] }{\Lambda_{\phi_{s_1^0, s_1^0 d}}}   \Norm{f^{\epsilon}_{it}}{{L}^{p_0}} \Norm{g^{\epsilon}_{it}}{{L}^{p_0^{\prime}}}.
	\end{split}
	\end{align}
Using the definition of $\sigma_z$ with $z=it$, we have
\begin{align*}
\widehat{\Psi}\, &  D_{j_1,\dots, j_n}  \sigma_{it}     \\
=&  \sum_{k_1,\dots, k_n\in \mathbb Z} \!\!   \widehat{\Psi} \,D_{j_1-k_1,\dots, j_n-k_n}\Bigg[  \widehat{\Psi_b}  \, \Gamma\big(-P_1(it) ,\dots, - P_n(it) \big)
	\Big[ \varphi_{k_1,\dots, k_n} h_{k_1,\dots, k_n}^{s_1-P_1(it)}  \Big]   \Bigg] .
\end{align*}
In view of the support properties of the bumps $\wh{\Psi}$ and $ \wh{\Psi_b}$, all
terms in the sum above are  zero if  $k_i\notin \{j_i-2, j_i-1,j_i, j_i+1,j_i+2\}$  for some   $i\in\{1,\dots, n\}$.
Using   this observation and Lemma~\ref{KatoPonce} with $\wh{\Phi} = \widehat{\Psi}   \,  D_{a_1,\dots , a_n} \widehat{\Psi_b} $, we write
\begin{align*}
	&\hspace{-.16in} \Norm{ \Gamma\left( s^{0}_1 ,\dots,  s^{0}_n \right) \big[ \widehat{\Psi} \,  D_{j_1,\dots, j_n}  \sigma_{it}  \big] }{\Lambda_{\phi_{ s_1^0, s_1^0 d}} } \\
 \le & \sum_{\substack{ 1\le i \le n \\  -2\le a_i\le 2}  } \bigg\Vert  \Gamma\left( s^{0}_1 ,\dots ,  s^{0}_n \right)
 \bigg[ \widehat{\Psi}   \,  \big( D_{a_1,\dots , a_n} \widehat{\Psi_b} \big)    \\
  & \qq\q
 D_{a_1,\dots , a_n} \Big\{
	\Gamma\big( - P_1(it)  , \dots, - P_n(it) \big) \big( \varphi_{j_1+a_1,\dots, j_n+a_n} h_{j_1+a_1,\dots, j_n+a_n}^{s_1-P_1(it)} \big) \Big\}    \bigg]\bigg\Vert_{\Lambda_{\phi_{s_1^0, s_1^0 d}} }\\
	 \le & \sum_{\substack{ 1\le i \le n \\  -2\le a_i\le 2}  } \bigg\Vert  \Gamma\left( s^{0}_1 ,\dots ,  s^{0}_n \right)
	 D_{a_1,\dots , a_n} \Big\{  \Gamma\left( -s^{0}_1 ,\dots ,  -s^{0}_n \right)   \\
  & \qq\q     \Gamma\big( it ( s_1^0- s_1^1) , \dots, it( s_n^0-s_n^1) \big)
  \big( \varphi_{j_1+a_1,\dots, j_n+a_n} h_{j_1+a_1,\dots, j_n+a_n}^{s_1-P_1(it)} \big) \Big\} \bigg\Vert_{\Lambda_{\phi_{s_1^0, s_1^0 d}} }\\
	\le & C \sum_{\substack{ 1\le i \le n \\  -2\le a_i\le 2}  }  \bigg\Vert\Gamma\big( it ( s_1^0- s_1^1) , \dots, it( s_n^0-s_n^1) \big)
	\left[ \varphi_{j_1+a_1,\dots, j_n+a_n} h_{j_1+a_1,\dots, j_n+a_n}^{s_1-P_1(it)} \right]
	 \bigg\Vert_{\Lambda_{\phi_{s_1^0, s_1^0 d}} } ,
\end{align*}
as $- P_j(it) = - s_j^0+ it ( s_j^0- s_j^1)$. In the last inequality we made use of    the fact that the function
$$
\prod_{i=1}^n \bigg(  \f{ 1+ 4 \pi^2 |\xi_i|^2  }{1+ 4 \pi^2 | \xi_i /2^{a_i}|^2  } \bigg)^{  {s_i^0}/{2} }
$$
satisfies \eqref{Eq-Marc2} and thus Lemma~\ref{Marc-inter} applies.
We continue estimating as follows:
\begin{align*}
  C \sum_{\substack{ 1\le i \le n \\  -2\le a_i\le 2}  } \bigg\Vert\Gamma\big(& it ( s_1^0- s_1^1) , \dots, it( s_n^0-s_n^1) \big)  \left[ \varphi_{j_1+a_1,\dots, j_n+a_n} h_{j_1+a_1,\dots, j_n+a_n}^{s_1-P_1(it)} \right]
	 \bigg\Vert_{  \Lambda_{\phi_{s_1^0,s_1^0 d}}    (\rn)}  \\
	  \le& C \sum_{\substack{ 1\le i \le n \\  -2\le a_i\le 2}  }  (1+|t |)^n \Norm{\varphi_{j_1+a_1,\dots, j_n+a_n}
	h_{j_1+a_1,\dots, j_n+a_n}^{s_1-s_1^0} }{   \Lambda_{\phi_{s_1^0,s_1^0 d}}     (\rn)}\\
	= & C(1+|t |)^n \sum_{\substack{ 1\le i \le n \\  -2\le a_i\le 2}  }
	\Norm{\varphi^*_{j_1+a_1,\dots, j_n+a_n}(r) r^{s_1-s_1^0} }{
	\Lambda_{\phi_{s_1^0,s_1^0 d}}((0,\infty),dr)}\\
	\leq & C(1+|t |)^n \sum_{\substack{ 1\le i \le n \\  -2\le a_i\le 2}  } \Norm{\varphi^*_{j_1+a_1,\dots, j_n+a_n} }{   \Lambda_{\phi_{s_1 ,s_1^0 d}}    (0,\infty) }\\
  =  & C(1+|t |)^n \sum_{\substack{ 1\le i \le n \\  -2\le a_i\le 2}  } \Norm{\varphi_{j_1+a_1,\dots, j_n+a_n} }{ \Lambda_{\phi_{s_1 ,s_1^0 d}}(\rn)} \\
  \le  & C(1+|t |)^n \sup_{j_1,\dots, j_n\in \mathbb Z } \Norm{\varphi_{j_1,\dots, j_n} }{ \Lambda_{\phi_{s_1 , d}} (\rn)} ,
\end{align*}
where we used successively  Lemma~\ref{imaginary-Lorentz},
the fact that $ \textup{Re }  P_1(it) =s_1^0 $, identity \eqref{BSha} together with
the fact that  $h_{j_1,\dots , j_n}$ is measure-preserving, and
Lemma~\ref{Lorentz maximal}. Inserting this estimate in    \eqref{RETT} and using Lemma~\ref{lem:016}  we obtain
	\begin{equation*}
	|F(it)|   \leq
	 CK_0(1+|t |)^n \sup_{j_1,\dots, j_n \in \mathbb{Z}} \Norm{\varphi_{j_1,\dots, j_n} }
	 { \Lambda_{\phi_{s_1 , d} }       (\rn)}
\big( \!\Norm{f}{{L}^p }^p +\epsilon' \big)^{\frac{1}{p_0}}  \big( \! \Norm{g}{{L}^{p^{\prime}} }^{p^{\prime}} +\epsilon' \big)^{\frac{1}{p^{\prime}_0}}.
	\end{equation*}
A similar argument    using the inequality
\[
\Norm{\varphi_{j_1,\dots, j_n} }{ \Lambda_{\phi_{s_1 ,(1-s_1^1)d}} (\rn)}
  \le \Norm{\varphi_{j_1,\dots, j_n} }{ \Lambda_{\phi_{s_1 , d}} (\rn)}
\]
yields
	\begin{equation*}
	|F(1+it)| \leq  CK_1(1+|t |)^n \sup_{j_1,\dots, j_n \in \mathbb{Z}}\Norm{\varphi_{j_1,\dots, j_n} }
	 { \Lambda_{\phi_{s_1 , d}} (\rn)}  \big( \!\Norm{f}{{L}^{p} }^p +\epsilon' \big)^{\frac{1}{2}}
	 \big(\! \Norm{g}{{L}^{p^{\prime}} }^{p^{\prime}} +\epsilon' \big)^{\frac{1}{2}}.
	\end{equation*}
Moreover, for $\tau\in [0,1]$, we claim that $|F(\tau+it)|\le A_{\tau}(t) $ where  $A_{\tau}(t)$ has at most polynomial growth as $|t|\to \infty$;  we   prove this assertion at the end. Thus we can apply Hirschman's lemma (\cite[Lemma 1.3.8]{CFA14}).
Using the estimates for $| F(it) | $ and $|F(1+it)|$,     for $\theta \in (0,1)$,  we obtain
\begin{equation*}
	\begin{aligned}
	|F(\theta)| &\leq  C_* K_0^{1-\theta} K_1^{\theta} \sup_{j_1,\dots, j_n \in \mathbb{Z}}\Norm{\varphi_{j_1,\dots, j_n} } { \Lambda_{\phi_{s_1 , d}}       (\rn)}\big( \Norm{f}{{L}^{p} }^p +\epsilon' \big)^{\frac{1}{p}} \big( \Norm{g}{{L}^{p^{\prime}} }^{p^{\prime}} +\epsilon' \big)^{\frac{1}{p^{\prime}}}.
	\end{aligned}
	\end{equation*}
We write
\begin{equation*}
\begin{aligned}
  \left| F(\theta) -   \int_{\rn} \widehat{T_\sigma (f)} \,  \widehat{ g} (\xi)\,    d\xi \right|
	=& \left|\int_{\mathbb R^n} \sigma (\xi)\widehat{f_{\theta}^{\epsilon}}
	  (\xi)\widehat{g_{\theta}^{\epsilon} } (\xi)\,   d\xi
	- \int_{\mathbb R^n} \sigma (\xi)
	\widehat{f}(\xi) \widehat{ g} (\xi)\, d\xi \right|  \\
 \leq &    \big\|\sigma\|_{L^\nf} \Big[ \big\| f_{\theta}^{\epsilon}-f\big\|_{L^2} \big\| g\big\|_{L^2}+
\big\| g_{\theta}^{\epsilon}-g\big\|_{L^2} \big\| f\big\|_{L^2}	 \Big],
\end{aligned}
\end{equation*}
which tends to zero as $	\epsilon\to 0 $ (which implies $\epsilon' \rightarrow 0$). Thus
\[
\bigg|  \int_{\rn} \widehat{T_\sigma (f)} \,  \widehat{ g} (\xi)\,   d\xi\bigg| \le
C_* K_0^{1-\theta} K_1^{\theta} \sup_{j_1,\dots, j_n \in \mathbb{Z}}\Norm{\varphi_{j_1,\dots, j_n} }
 { \Lambda_{\phi_{s_1 , d}}(\rn)} \Norm{f}{{L}^{p} }  \Norm{g}{{L}^{p^{\prime}} } .
\]
But the integral on the left is equal to  $\int_{\rn} T_\sigma(f)(x) g(-x) dx$.
Taking the supremum over all functions $g \in  \mathscr{C}_0^{\infty}(\rn)$  with $\Norm{g}{{L}^{p^{\prime}}} \leq 1$
we deduce for $f \in \mathscr{C}_0^{\infty}(\rn)$:
\[
\Norm{T_{\sigma}(f)}{{L}^p(\rn)} \leq C_* K_0^{1-\theta} K_1^{\theta}
\sup_{j_1,\dots, j_n \in \mathbb{Z}}\Norm{\varphi_{j_1,\dots, j_n}} {\Lambda_{\phi_{s_1 , d}} (\rn)}\Norm{f}{{L}^{p}(\rn)} .
\]
Notice that the   constant $C_*$ depends on   the  parameters indicated in the statement.

We now return to the assertion  that $|F(\tau+it)|\le A_{\tau}(t) $, where  $A_{\tau}(t)$ has at most polynomial growth in $|t|$,
which was one of the hypotheses in Hirschman's lemma. Let $z= \tau + it$ where $t \in \mathbb{R}$ and $0\leq \tau \leq 1$. We use that
\[
|F(\tau+it)| \le \|\sigma_{\tau + it}  \|_{L^\infty} \| f_{ \tau + it}  \|_{L^2}  \| g_{ \tau + it}  \|_{L^2} ,
\]
and we notice that in view of \eqref{EQ-Fz}, the $L^2$ norms of $f_{ \tau + it}$ and $g_{ \tau + it}$
are bounded by constants independent of $t$. We now  estimate $\Norm{\sigma_z}{{L}^{\infty}}$. Let $E$ be
the set of all $(\xi_1,\dots , \xi_n) \in \mathbb{R}^n$ with some $\xi_i= 0$. Then for all
$(\xi_1,\dots , \xi_n) \in \mathbb{R}^n\setminus E$ there are only finitely many  indices $k_i$ in the
summation  defining $\sigma_z(\xi_1,\dots , \xi_n)$ that produce a nonzero term, in fact the indices with
$|\xi_i|/4\le 2^{k_i} \le 4 |\xi_i|$ for all $i\in \{1,\dots , n\}$. Also,
$ P_\rho(\tau+it) = P_\rho(\tau) + (s_1^1-s_1^0)( it)$, which implies that
\begin{align} \begin{split}\label{009988}
&\Gamma\big( -P_1(\tau+it),\dots,  -P_n(\tau+it) \big) \\
& \qq\qq =\Gamma\big(- P_1(\tau ),\dots,  -P_n(\tau ) \big) \,\,\Gamma\big( it(s_1^0-s_1^1) ,\dots, it(s_n^0-s_n^1) \big)\,.
\end{split}\end{align}
Applying identity \eqref{009988}, and using successively
Proposition~\ref{P:embedding},
Lemma~\ref{imaginary-Lorentz},
the fact that $ \textup{Re }  P_1(\tau+ it) =P_1 (\tau) $,  identity \eqref{BSha} together with  the fact
that  $h_{k_1,\dots , k_n}$ is measure-preserving, and Lemma~\ref{Lorentz maximal}, we estimate
$\Norm{\sigma_{\tau+it}}{{L}^{\infty}}$ by
\begin{align*}
& \sup_{\xi \in \mathbb{R}^n\setminus E}\!\!\! \sum_{ {\substack{ 1\le i \le n \\  \f{  |\xi_i| }4\le 2^{k_i} \le 4 |\xi_i| }}}\!\!\!
\bigg\Vert \Gamma\big(-P_1(\tau+it)  ,\dots, - P_n(\tau+it) \big) \left[ \varphi_{k_1,\dots, k_n} h_{k_1,\dots, k_n}^{s_1- P_1(\tau+it)} \right] \bigg\Vert_{L^{\infty} }\\
& \leq C\, (1+|t |)^n   \sup_{\xi \in \mathbb{R}^n\setminus E}\!\!\! \sum_{ {\substack{ 1\le i \le n \\  \f{  |\xi_i| }4\le 2^{k_i} \le 4 |\xi_i|   }}}\!\!\! \Norm{ \varphi_{k_1,\dots, k_n} h_{k_1,\dots, k_n}^{s_1-  {  P_1(\tau)  }  } }   { \Lambda_{\phi_{P_1(\tau) ,  (1-P_1(\tau))d}}
(\rn)} \\
& \leq C\,(1+|t |)^n \sup_{\xi \in \mathbb{R}^n\setminus E} \sum_{ {\substack{ 1\le i \le n \\
\f{  |\xi_i| }4\le 2^{k_i} \le 4 |\xi_i|}}}\Norm{ \varphi^*_{k_1,\dots, k_n}(r) r^{s_1-{ P_1(\tau)  }}}
{\Lambda_{\phi_{P_1(\tau), (1-P_1(\tau)) d}}(0,\infty) }  \\
& \leq C (1+|t |)^n \sup_{\xi \in \mathbb{R}^n\setminus E} \sum_{ {\substack{ 1\le i \le n \\  \f{  |\xi_i| }4\le 2^{k_i} \le 4 |\xi_i|}}}
\Norm{ \varphi^*_{k_1,\dots, k_n} } { \Lambda_{\phi_{s_1 ,  (1-P_1(\tau) )d}}  (0,\infty)} \\
& \leq  C\,  (1+|t |)^n 5^n \sup_{k_1,\dots , k_n\in \mathbb Z} \Norm{ \varphi_{k_1,\dots, k_n}}
{\Lambda_{\phi_{s_1, d}}(\rn)}\,,
\end{align*}
and the last expression is finite in view of assumption \eqref{assump77}. This proves that $|F(\tau+it)|\le A_{\tau}(t) $, where
$A_{\tau}(t)\le C' \, (1+|t|)^n$.
\end{proof}

To prove Theorem~\ref{1/2MainResult} we   apply Theorem~\ref{Interpolation} as follows: For the given $p$ with $1<p<2$
we set $p_0 =1+\epsilon$ for some small number $\epsilon$, and we define $\theta$ in terms of $(1-\theta)/p_0+\theta/2= 1/p$.

Given $0< s_1\le \dots \le s_n$ with exactly $d$ numbers among $s_2,\dots , s_n$ equal to $s_1$, pick
$\f12 < s_1^0\le \dots \le s_n^0$ and $0< s_1^1\le \dots \le s_n^1 \leq 1/2$ such that
$s_j = (1-\theta) s_j^0+\theta s_j^1$. This relationship maintains    proportions, and as the sequences are
all increasing, it must be the case that the first $d+1$ terms in each sequence are equal. We pick these sequences
so that $ s_1^0 =\dots =s_{d+1}^0=\frac12+ \varepsilon$   and   $ s_1^{1}=\dots=s_{d+1}^1$. We note that $s_1^1$
can be found thanks to the assumption $s_1>1/p-1/2$. Inequality~\eqref{E:first} follows from the special case
$1/2<s_1 <1$ of  Theorem~\ref{1/2MainResult} proved in Section~\ref{S:core}, while   inequality~\eqref{E:second} follows
from Proposition~\ref{P:embedding}.

\begin{Remark}\label{R:remark_logarithm}
Assume that all assumptions of Theorem~\ref{1/2MainResult} are satisfied and, in addition, $s_1\leq 1/2$. Let $\delta>0$.
We claim that inequality~\eqref{E:bound} holds with the (smaller) constant
\[
K=	\sup_{j_1,\dots , j_n\in \mathbb{Z} } \Norm{\Gamma \left(s_1,\dots , s_n\right)\big[  \wh{\Psi}\,
D_{j_1,\dots , j_n} \sigma \big]}{\Lambda_{\phi_{s_1, (1-s_1+\delta)  d}}(\mathbb{R}^n)}\,.
\]
This can be proved by employing a slight modification of the proof of Theorem~\ref{Interpolation}. Namely, we replace equation~\eqref{E:sigma} by
\begin{align*}
&\sigma_z  = \sum_{k_1,\dots, k_n \in \mathbb{Z}}D_{-k_1,\dots, -k_n} \\
&\bigg[  \widehat{\Psi_b} \, \Gamma\big(-P_1(z) ,\dots, - P_n(z) \big)
\Big[ \varphi_{k_1,\dots, k_n} h_{k_1,\dots, k_n}^{s_1-P_1(z)} (\log(e+h_{k_1,\dots,k_n}^{-1}))^{(P_1(z)-s_1)d}  \Big] \bigg]
\end{align*}
and define the function $F$ by~$\eqref{E:definition_F}$. Then one can show that
\begin{equation*}
|F(it)|  \leq CK_0(1+|t |)^n \sup_{j_1,\dots, j_n \in \mathbb{Z}} \Norm{\varphi_{j_1,\dots, j_n} }
{ \Lambda_{\phi_{s_1 , d(2s_1^0-s_1)}}       (\rn)}
\big( \!\Norm{f}{{L}^p }^p +\epsilon' \big)^{\frac{1}{p_0}}  \big( \! \Norm{g}{{L}^{p^{\prime}} }^{p^{\prime}}
+\epsilon' \big)^{\frac{1}{p^{\prime}_0}}
\end{equation*}
and
\begin{equation*}
|F(1+it)| \leq  CK_1(1+|t |)^n \sup_{j_1,\dots, j_n \in \mathbb{Z}}\Norm{\varphi_{j_1,\dots, j_n} }
{ \Lambda_{\phi_{s_1 , d(1-s_1)} }       (\rn)}  \big( \!\Norm{f}{{L}^{p} }^p +\epsilon' \big)^{\frac{1}{2}}
\big(\! \Norm{g}{{L}^{p^{\prime}} }^{p^{\prime}} +\epsilon' \big)^{\frac{1}{2}}\,.
\end{equation*}
This then implies
\[
\Norm{T_{\sigma}(f)}{{L}^p(\rn)} \leq C_* K_0^{1-\theta} K_1^{\theta}
\sup_{j_1,\dots, j_n \in \mathbb{Z}}\Norm{\varphi_{j_1,\dots, j_n} } { \Lambda_{\phi_{s_1 , d(2s_1^0-s_1)}} (\rn)} \Norm{f}{{L}^{p}(\rn)}\,.
\]
Choosing all parameters as in the proof of Theorem~$\ref{1/2MainResult}$ with $\epsilon<\delta/2$ yields the conclusion.
	
We also recall that if $s_1>1/2$ then Theorem~$\ref{1/2MainResult}$ holds with
\[
K=	\sup_{j_1,\dots , j_n\in \mathbb{Z} } \Norm{\Gamma \left(s_1,\dots , s_n\right)\big[  \wh{\Psi}\,
D_{j_1,\dots , j_n} \sigma \big]}{\Lambda_{\phi_{s_1, s_1 d}}(\mathbb{R}^n)};
\]
this was proved in Section~$\ref{S:core}$.
\end{Remark}


\begin{thebibliography}{9}
	
	


\bibitem{BH}
J.~J.~Benedetto and H.~P.~Heinig, \emph{Weighted Fourier inequalities: new proofs and generalizations},
J. Fourier Anal. Appl. {\bf 9} (2003), no. 1, 1--37.

\bibitem{BSInterpol88}
C.~Bennett and R.~Sharpley, \emph{Interpolation of operators}, Academic Press, Boston, 1988.

\bibitem{Boyd}
D.\,W.~Boyd, \emph{Indices of function spaces and their relationship to interpolation}, Canadian J. Math.
\textbf{21}~(1969), 1245--1254.


\bibitem{Cal}
A.\,P.~Calder\'on, \emph{Lebesgue spaces of differentiable functions and distributions},
Proc. Pure Math., Vol. 4, Amer. Math. Soc., Providence, R.~I., 1961, 33--49.

		
\bibitem{C}
A.~Carbery, \emph{Differentiation in lacunary directions and an extension of the Marcinkiewicz multiplier theorem},
Ann. de l' Inst. Fourier \textbf{38}~(1988), 157--168.

\bibitem{CSTAMS}
A.~Carbery and A.~Seeger, \emph{$H^p$-and $L^p$-Variants of multiparameter Calder\'on-Zygmund theory},
Trans.  Amer. Math. Soc. \textbf{334}~(1992), 719--747.

\bibitem{CS}
A.~Carbery and A.~Seeger, \emph{Homogeneous Fourier multipliers of Marcinkiewicz type},
Ark. Mat. \textbf{33}~(1995), 45--80.


\bibitem{CaS}
L.~Carleson and P.~Sj\"olin, \emph{Oscillatory integrals and multiplier problem for the disk},
Studia Math. \textbf{44}~(1972), 287--299.

\bibitem{CFS}
R.~Coifman, J.\,L.~Rubio de Francia and S.~Semmes, \emph{Multiplicateurs de Fourier de $L_p(R)$
et estimations quadratiques}, C.~R. Acad. Sci. Paris Sér. I Math. \textbf{306}~(1988), no.~8, 351--354.

\bibitem{Fe}
C.~Fefferman, \emph{The multiplier problem for the ball}, Ann. of Math. \textbf{94}~(1971), no.~2,
330--336.




\bibitem{CFA14}
L.~Grafakos, \emph{Classical Fourier Analysis}, 3rd edition, GTM 249, Springer-Verlag, NY 2014.
	
\bibitem{Paper0}
L.~Grafakos, \emph{ Some remarks on the Miklhin-H\"ormander and Marcinkiewicz multiplier theorems: a short historical account and a recent improvement}, J. Geom. Anal., to appear. 	
	
\bibitem{Paper1}
L.~Grafakos, \emph{An improvement of the Marcinkiewicz multiplier theorem}, Israel J. Math., to appear.

\bibitem{GrNg}
L.~Grafakos  and H.\,V.~Nguyen, \emph{The H\"ormander Multiplier Theorem, III: The complete bilinear case
via interpolation}, Monat. f\"ur Math.,	\textbf{190}~(2019), 735--753.

\bibitem{GSMarc19}
L.~Grafakos and L.~Slav\'\i kov\'a, \emph{The Marcinkiewicz multiplier theorem revisited},
Arch. Math. (Basel) \textbf{112}~(2019), no.~2, 191--203.
	
\bibitem{GSHor20}
L.~Grafakos and L.~Slav\'\i kov\'a, \emph{A sharp version of the H\"ormander multiplier theorem},
Int. Math. Res. Not. IMRN \textbf{2019}, no.~15, 4764--4783.


\bibitem{Hirsch}
I.\,I.~Hirschman, \emph{On multiplier transformations}, Duke Math. J. \textbf{26}~(1959), 221--242.

\bibitem{Hor}
L.~H\"ormander, \emph{Translation invariant operators}, Acta Math. \textbf{104}~(1960), 93--139.


\bibitem{Hyt}
T.~Hyt\"onen, \emph{Fourier embeddings and Mihlin-type multiplier theorems}, Math. Nachr.
\textbf{274-275}~(2004), 74--103.
	
\bibitem{JT}	
M.~Jodeit and A.~Torchinsky, \emph{Inequalities for Fourier transforms}, Studia Math. \textbf{37}~(1970/71), 245--276.

\bibitem{KPS}
S.\,G. Krein, Yu.\,I.~Petunin, E.\,M.~Semenov, \emph{Interpolation of Linear Operators}, Nauka, Moscow, 1978 (in Russian),
English transl.: Amer. Math. Soc., Providence, 1982.


\bibitem{Larsen}
R.~Larsen, \emph{An Introduction to the Theory of Multipliers}, Springer-Verlag, New York-Heidelberg 1971.

\bibitem{Leeuw}
K.~de Leeuw, \emph{On $L^p$ multipliers}, Ann. of Math. \textbf{81}~(1965), no.~2, 364--379.

\bibitem{Lorentz}
G.\,G.~Lorentz, \emph{On the theory of spaces $\Lambda$}, Pacific J. Math. \textbf{1}~(1951), 411--429.
	
\bibitem{Marc}
J.~Marcinkiewicz, \emph{Sur les multiplicateurs de s\'eries de Fourier}, Studia Math.
\textbf{8}~(1939), 78--91.

\bibitem{PKJF}
L.~Pick, A.~Kufner, O.~John and S. Fu\v c\'ik, \emph{Function spaces}, Vol. 1. Second revised and extended edition.
De Gruyter Series in Nonlinear Analysis and Applications, 14. Walter de Gruyter \& Co., Berlin, 2013.

\bibitem{RS}
J.~Rastegari and G.~Sinnamon, \emph{Weighted Fourier inequalities via rearrangements},
J. Fourier Anal. Appl. \textbf{24}~(2018), no.~5, 1225--1248.

\bibitem{S}
G.~Sinnamon, \emph{The Fourier transform in weighted Lorentz spaces}, Publ. Mat. \textbf{47}~(2003),
no.~1, 3--29.

\bibitem{Stein}
E.\,M.~Stein, \emph{Singular Integrals and Differentiability Properties of Functions},
Princeton University Press, Princeton 1970.


\bibitem{TW}
T.~Tao and J.~Wright, \emph{Endpoint multiplier theorems of Marcinkiewicz type},
Rev. Mat. Iberoamericana \textbf{17}~(2001), no.~3, 521--558.

\end{thebibliography}
\end{document}